\documentclass[times,sort&compress,3p]{elsarticle} 
%\journal{Journal of Multivariate Analysis}

\makeatletter
\def\ps@pprintTitle{%
 \let\@oddhead\@empty
 \let\@evenhead\@empty
 \def\@oddfoot{}%
 \let\@evenfoot\@oddfoot}
\makeatother

\usepackage{amsfonts}
\usepackage{amsmath}
\usepackage{amsthm}
\usepackage{amssymb}
\usepackage{graphicx}
\usepackage{color}
\usepackage{natbib}
\RequirePackage[colorlinks,citecolor=blue,urlcolor=blue]{hyperref}

\usepackage{lineno,hyperref}
%\modulolinenumbers[5]

\newtheorem{theorem}{Theorem}[subsection]

\newtheorem{definition}{Definition}[subsection]
\newtheorem{corollary}{Corollary}[subsection]
\newtheorem{remark}{Remark}[subsection]

% Separate assumption numbering for vector and matrix versions
\newtheorem{assumptionvec}{Assumption}
\newtheorem{assumptionmat}{Assumption}
\newtheorem{assumptionten}{Assumption}

% Lemmas and appendix lemmas
\newtheorem{lemma}{Lemma}[subsection]

% Numbering only a single line in align

\def\E{\mathrm{E}}
\def\vec{\mathrm{vec}}
\def\cov{\mathrm{cov}}
\def\var{\mathrm{var}}

\def\mbf#1{\boldsymbol{#1}}
\def\real{\mathbb R}
\def\hi#1{^{#1}}
\def\lo#1{_{#1}}

\def\tsum{\textstyle{\sum}}
\def\loo#1{\lo{(#1)}}

\def\ali{&\,}
\def\tra{\mathrm{tr}}
\def\ali{&\,}
\def\hii#1{\hi{(#1)}}

\def\inv{\hi{-1}}
\def\tprod{\textstyle{\prod}}
\def\ca#1{{\cal{#1}}}

\newcommand{\bs}{\boldsymbol}
\newcommand{\bo}{\mathbf}

%\journal{Journal of Multivariate Analysis}

%%%%%%%%%%%%%%%%%%%%%%%
%% Elsevier bibliography styles
%%%%%%%%%%%%%%%%%%%%%%%
%% To change the style, put a % in front of the second line of the current style and
%% remove the % from the second line of the style you would like to use.
%%%%%%%%%%%%%%%%%%%%%%%

%% Numbered
%\bibliographystyle{model1-num-names}

%% Numbered without titles
%\bibliographystyle{model1a-num-names}

%% Harvard
%\bibliographystyle{model2-names}\biboptions{authoryear}

%% Vancouver numbered
%\usepackage{numcompress}\bibliographystyle{model3-num-names}

%% Vancouver name/year
%\usepackage{numcompress}\bibliographystyle{model4-names}\biboptions{authoryear}

%% APA style
%\bibliographystyle{model5-names}\biboptions{authoryear}

%% AMA style
%\usepackage{numcompress}\bibliographystyle{model6-num-names}

%% `Elsevier LaTeX' style
%\bibliographystyle{elsarticle-num}

\bibliographystyle{acm}
%%%%%%%%%%%%%%%%%%%%%%%

\begin{document}

\begin{frontmatter}

\title{Independent component analysis for tensor-valued data\tnoteref{grant1}}

%% Authors 
\author[address1]{Joni Virta\corref{cor}}
\cortext[cor]{Corresponding author}
\ead[url]{joni.virta@utu.fi}

\author[address2]{Bing Li}
\ead[url]{bing@stat.psu.edu}

\author[address1,address3]{Klaus Nordhausen}
\ead[url]{klaus.nordhausen@tuwien.ac.at}

\author[address1]{Hannu Oja}
\ead[url]{hannu.oja@utu.fi}

\address[address1]{Department of Mathematics and Statistics, University of Turku, 20014 Turku, Finland}
\address[address2]{Department of Statistics, Pennsylvania State University, 326 Thomas Building, University Park, Pennsylvania 16802, USA.}
\address[address3]{CSTAT - Computational Statistics, Institute of Statistics \& Mathematical Methods in Economics, Vienna University of Technology, Wiedner Hauptstr. 7, A-1040 Vienna, Austria}

\tnotetext[grant1]{The research of Joni Virta, Klaus Nordhausen and Hannu Oja was partially supported by the Academy of Finland Grant 268703. The research of Bing Li was partially supported by the National Science Foundation Grant DMS-1407537.}

\begin{abstract}
In preprocessing tensor-valued data, e.g., images and videos, a common procedure is to vectorize the observations and subject the resulting vectors to one of the many methods used for independent component analysis (ICA). However, the tensor structure of the original data is lost in the vectorization and, as a more suitable alternative, we propose the \textit{matrix-} and \textit{tensor fourth order blind identification} (MFOBI and TFOBI). In these tensorial extensions of the classic \textit{fourth order blind identification} (FOBI) we assume a Kronecker structure for the mixing and perform FOBI simultaneously on each direction of the observed tensors. We discuss the theory and assumptions behind MFOBI and TFOBI and provide two different algorithms and related estimates of the unmixing matrices along with their asymptotic properties. Finally, simulations are used to compare the method's performance with that of classical FOBI for vectorized data and we end with a real data clustering example.
\end{abstract}

\begin{keyword}
FOBI\sep Kronecker structure\sep Matrix-valued data\sep Multilinear algebra
\MSC[2000] 62H12\sep 62G20\sep 62H10
\end{keyword}

\end{frontmatter}

%\linenumbers

\section{Introduction}

\subsection{Review of matrix-valued data with the Kronecker structure}

In this paper we develop the theory and algorithms for \textit{independent component analysis} (ICA) for tensor-valued data. As the main ideas are best illustrated in the special case where the observations are matrix-valued we begin by considering the following location-scatter model incorporating Kronecker structure for matrix-valued random elements:
\begin{align}\label{eq:ls_model}
\textbf{X} = \boldsymbol{\mu} + \boldsymbol{\Omega}_L \textbf{Z} \boldsymbol{\Omega}{}_R^\top,
\end{align}
where $\textbf{X}\in \mathbb{R}^{p \times q} $ is the observed matrix, $\boldsymbol{\mu}\in \mathbb{R}^{p \times q} $ is a location center, $\boldsymbol{\Omega}_L \in \mathbb{R}^{p \times p}$ and $\boldsymbol{\Omega}_R \in \mathbb{R}^{q \times q}$ are \textit{mixing matrices} that specify linear row and column dependencies, respectively, and $\textbf{Z} \in \mathbb{R}^{p \times q}$ is a matrix of standardized uncorrelated random variables, $\E \left\{ \vec (\textbf{Z}) \right\}= \textbf{0}_{pq}$ and $\cov \left\{ \vec (\textbf{Z}) \right\} = \textbf{I}_{pq}$.

It follows that $\E \left\{ {\rm vec} (\textbf{X}) \right\}= \vec (\boldsymbol{\mu})$ and the covariance matrix of the vectorized observation has the Kronecker covariance structure,
\begin{align*}
\cov \left\{ \vec (\textbf{X}) \right\} = \boldsymbol{\Sigma}_R \otimes \boldsymbol{\Sigma}_L,
\end{align*}
with $\boldsymbol{\Sigma}_R = \boldsymbol{\Omega}_R \boldsymbol{\Omega}{}_R^\top$ and $\boldsymbol{\Sigma}_L = \boldsymbol{\Omega}_L \boldsymbol{\Omega}{}_L^\top$. Note that the structured $\cov \left\{ \vec (\textbf{X}) \right\}$ has $(1/2) p(p+1) + (1/2) q(q+1)-1$ parameters while the number of parameters in the general unstructured case is as large as $(1/2) pq(pq+1)$.

Many examples of matrix-valued data with Kronecker structure exist. For example, in the case of clustered multivariate data the i.i.d. observations $\textbf{X}_1, \ldots ,\textbf{X}_n$ represent the $n$ clusters with $q$ individuals in each cluster and $p$ variables measured on each individual, whereas in repeated measures analysis one considers $n$ individuals $\textbf{X}_1, \ldots ,\textbf{X}_n$ with $p$ measured variables and $q$ repetitions on each individual. If the columns of $\textbf{X}$ are exchangeable random vectors, as is the case with clustered data, then $\boldsymbol{\Sigma}_R$ has the intraclass correlation structure, $\boldsymbol{\Sigma}_R \propto (1 - \rho)\textbf{I}_q+ \rho \textbf{1}_q \textbf{1}_q^\top$. In applications of matrix or tensor-valued data such as  channel modelling for multiple-input multiple-output (MIMO) communication, analysis of spatio-temporal EEG (electroencephalography) data, fMRI (functional Magnetic Resonance Imaging) data, or general image or video clip data, for example, the problem itself often suggests Kronecker structure \citep{werner2008estimation}.

Consider next applying distributional assumptions for $\textbf{Z}$ in the model \eqref{eq:ls_model}. The (parametric) {\it multivariate normal model} or the wider (semiparametric) {\it elliptical model} are obtained if one assumes that $\vec (\textbf{Z})\sim \mathcal{N}_{pq}(\textbf{0}_{pq}, \textbf{I}_{pq})$ or that the distribution of $\vec (\textbf{Z})$ is spherically symmetric, respectively. In these models  $\boldsymbol{\Omega}_L$ and $\boldsymbol{\Omega}_R$ are well-defined only up to postmultiplication by orthogonal matrices and the number of free mixing parameters is therefore $(1/2) p(p+1) + (1/2) q(q+1)-1$. See for example \cite{GuptaNagar2000} for an overview of matrix-valued distributions. In this paper we assume that the $pq$ components of $\vec (\textbf{Z})$ are mutually independent. This semiparametric model, called the {\it independent component model}, provides an alternative extension of the multivariate normal model. In this case $\boldsymbol{\Omega}_L$ and $\boldsymbol{\Omega}_R$ are well-defined up to permutations and signs of their columns making the number of free mixing parameters $p^2+q^2-1$. In independent component analysis for matrix-valued data the objective is then to use the realisations $\textbf{X}_1, \ldots ,\textbf{X}_n$ of the model \eqref{eq:ls_model} to estimate \textit{unmixing matrices} $\boldsymbol{\Gamma}_L \in \mathbb{R}^{p \times p}$ and $\boldsymbol{\Gamma}_R \in \mathbb{R}^{q \times q}$ such that $\boldsymbol{\Gamma}_L \textbf{X} \boldsymbol{\Gamma}{}_R^\top$ has mutually independent components.

In the multivariate normal case \cite{srivastava2008models} introduced likelihood ratio test for the null hypotheses of Kronecker covariance structure and used the so-called flip-flop algorithm to find maximum likelihood estimates of $\boldsymbol{\Sigma}_R$ and $\boldsymbol{\Sigma}_L$ under the null hypothesis. For another approach to this estimation problem, see \cite{Ros2016, wiesel2012geodesic}. \cite{srivastava2008models} also tested the hypothesis that $\boldsymbol{\Sigma}_R$ is an identity matrix, a diagonal matrix or of intraclass correlation structure, see their paper for further references. \cite{sun2015robust} considered robust estimation of a structured covariance matrix, including Kronecker covariance structure, under heavy-tailed elliptical distributions and \cite{greenewald2014robust} modelled the covariance matrix of spatio-temporal data as a sum of low-rank Kronecker products and a sparse matrix.

\subsection{Review of methods for general tensor-valued data}

Like matrices, also tensor-valued observations have become a prevalent form of modern data and some fields of application include, e.g., psychometrics, chemometrics and computer vision, see \cite{kolda2009tensor, lu2011survey} along with the references therein for more examples.
%For example, consider the problem of clustering or classifying a collection of Youtube video clips. A video clip without color is a 3-dimensional tensor, with two spatial coordinates and one time coordinate. The entries of the tensor are the intensity of the image. If we take color into account, then it is a 4-dimensional tensor, with the fourth dimension of length three, containing the RGB-values of the pixels.
For modelling tensor data, e.g., tensor normal distribution has been proposed, see \cite{manceur2013maximum, ohlson2013multilinear}. Also a general location-scatter model and an independent component model for tensor-valued data are easily defined, see Section~\ref{section:tensor}. In both cases, for a tensor-valued random element $\textbf{X}\in \mathbb{R}^{p_1 \times  \ldots  \times p_r}$, the covariance matrix of the vectorized observation again exhibits a Kronecker structure, $\cov \left\{ \vec (\textbf{X}) \right\} = \boldsymbol{\Sigma}_r \otimes \ldots \otimes \boldsymbol{\Sigma}_1$.

Tensor-based methods have a long history in, e.g., signal processing in the form of different tensor decompositions. The two most prevalent ones are CP-decomposition and the Tucker decomposition which provide tensor analogies for singular value decomposition and principal component analysis, respectively. Both are thoroughly discussed in \cite{kolda2009tensor} where a review of numerous other tensor decompositions is also given. See also \cite{beckmann2005tensorial} who introduce tensor PICA, an independent component analysis method for fMRI data that is based on the CP-decomposition and \cite{kim2013robust} who present various robust and sparse tensor decompositions for coping with outliers and sparsity.

%Dimension reduction methods for matrix-valued or tensor-valued data have been introduced in the literature only recently.
%Due to their wide applications in contemporary data analysis the past few years have also seen dynamic developments of statistical methods for dimension reduction and regression involving tensor-valued %observation units.
Also in the statistics literature methods for tensor-valued observations have been increasingly discussed in the recent years. For example, \cite{li2010dimension} expanded the sliced inverse regression methodology developed in \cite{li1991sliced} to create dimension folding, a supervised dimension reduction method for matrix and tensor-valued predictors. \cite{pfeiffer2012sufficient} considered sufficient dimension reduction for longitudinal predictors. \cite{hung-2013, zhou-2013} developed logistic regression and generalized linear models for tensor-valued predictors. \cite{zhao-2014, zhou-2014} developed regularized linear regression and generalized linear models for tensor-valued predictors. \cite{ding2014dimension} discussed matrix versions of principal component analysis and principal fitted components (PFC). \cite{xue2014sufficient} introduced central mean dimension folding subspace and proposed several methods to estimate it. \cite{ding2015tensor} further developed tensor-valued sliced inverse regression.  An alternative   perspective for sufficient dimension reduction for tensors was considered in \cite{ding2015higher, zhong-2015}. See also \cite{hung-2012, schott-2014, zeng-2013}.

High dimensionality is common to modern, naturally tensor-valued data sets and in many cases the number of variables further exceeds the number of observations, preventing the use of vector-valued methods. In such cases tensorial methods of dimension reduction, such as those listed above, provide an especially attractive course of action, allowing the reduction of the data while taking into account its special tensor structure, see \cite{virta2016applying, virta2017blind}. In this paper we tackle this problem from the viewpoint of independent component analysis.

\subsection{Independent component analysis for tensor-valued data}

Extending independent component analysis to tensors has also seen some attention but, to our knowledge, no model-based treatise has been given. \cite{vasilescu2005multilinear, zhang2008directional} discuss the ICA problem for tensor data and propose unmixing each of the modes separately by $m$-flattening the data tensor and subjecting the matrix of $m$-mode vectors to standard ICA methods. This approach however discards all the information on the structural dependence present in the tensors. %has the drawback of losing the information on the subject-wise groups of $m$-mode vectors.
Our proposed method, TFOBI, a tensor analogy for a popular independent component analysis method called fourth order blind identification (FOBI) \citep{cardoso1989source}, also considers each mode separately, but instead
takes advantage of this structural information in estimating
the independent components. %uses jointly standardized $m$-mode vectors and finds the rotations to make the components of all $m$-mode matrices mutually independent.
%also considers  $m$-modes separately but makes $m$-mode vectors to make them  them and usein information on the dependence between the $m$-modes vectors and , but   instead retains this subject grouping and thus uses more information in estimating the independent components.

In the classic independent component analysis for vector-valued data it is assumed that the observations  $\textbf{x} \in \mathbb{R}^p$ obey the model
\begin{align} \label{eq:vec_icmodel}
\textbf{x} = \boldsymbol{\mu} + \boldsymbol{\Omega} \textbf{z},
\end{align}
where $\boldsymbol{\mu} \in \mathbb{R}^p$ is the location center, $\boldsymbol{\Omega} \in \mathbb{R}^{p \times p}$ is the so-called mixing matrix and $\textbf{z} \in \mathbb{R}^p$ is a vector of standardized, mutually independent components.
The goal is, given the i.i.d. observations $\textbf{x}_1, \ldots ,\textbf{x}_n$, to find an  estimate of an unmixing matrix  $\boldsymbol{\Gamma} \in \mathbb{R}^{p \times p}$  such that $\boldsymbol{\Gamma} \textbf{x}$ has mutually independent components. Numerous methods for solving the vector-valued independent component problem can be found in the literature, the most popular ones including FOBI, JADE (joint approximate diagonalization of eigen-matrices) and FastICA, see, e.g., \cite{HyvarinenKarhunenOja:2001, miettinen2014fourth}.

FOBI is based on the fact that in the independent component model \eqref{eq:vec_icmodel} both
\[
\E \left( \textbf{z} \textbf{z}^\top \right) = \textbf{I}_p \quad \mbox{and} \quad \E \left( \textbf{z} \textbf{z}^\top \textbf{z} \textbf{z}^\top \right) = \E \left( \| \textbf{z} \|^2 \textbf{z} \textbf{z}^\top \right)
\]
are diagonal matrices. In a similar way our extension of FOBI for matrix-valued observations, called \textit{matrix fourth order blind identification} (MFOBI), makes use of the fact that the matrices
\[
\E \left( \textbf{Z} \textbf{Z}^\top \right) = q \textbf{I}_p \quad \mbox{and} \quad \E \left( \textbf{Z}^\top \textbf{Z} \right) = p \textbf{I}_q
\]
and
\[
\E \left( \textbf{Z} \textbf{Z}^\top \textbf{Z} \textbf{Z}^\top \right) \quad \mbox{and} \quad \E \left( \textbf{Z}^\top \textbf{Z} \textbf{Z}^\top \textbf{Z}\right)
\]
and
\[
\E \left( \|\textbf{Z}\|_F^2 \textbf{Z} \textbf{Z}^\top \right) \quad \mbox{and} \quad \E \left( \|\textbf{Z}\|_F^2 \textbf{Z}^\top \textbf{Z} \right)
\]
are all diagonal. Here $\|\cdot\|_F$ is the Frobenius norm. Similar constructs for tensor-valued data are discussed in Section \ref{section:tensor}.

This paper is structured as follows. We start with some notation and important concepts in Section \ref{sec:nota}.
In Section \ref{sec:icm} we review the classic independent component model for vector-valued observations and then extend the model for matrix-valued data. The identifiability constraints and assumptions regarding both models are also discussed. Next, in Section \ref{sec:fold}, we first review the basic steps --- standardization and rotation --- of finding the classical FOBI solution and then by analogy find the MFOBI solution by double standardization and double rotation. Furthermore, we provide two different ways for estimating the double rotation and then show that the MFOBI estimate is Fisher consistent. In Section \ref{section:tensor} we further extend the method to tensor-valued data and obtain the general TFOBI method. In Section \ref{sec:asymp} we provide the asymptotic behavior for the extended FOBI procedures in the case of identity mixing. Orthogonal equivariance of TFOBI implies that the asymptotic variances derived for both versions allow comparisons with FOBI also for any orthogonal mixing matrices. In Section \ref{sec:simu} we use simulations to compare TFOBI with vectorizing and using FOBI in both the general case of estimating the correct unmixing matrix and blind classification. Also a real data example is included. Finally, in Section \ref{sec:conc} we close with some conclusions and prospective ideas.

Our route of exposition from MFOBI to TFOBI is not the most parsimonious one as MFOBI is logically a special case of TFOBI. We choose this path not only because the core ideas are best explained in the matrix setting; they would be hard to discern amongst the complicated tensor manipulations, but also because the asymptotic behavior of TFOBI reverts to that of MFOBI for tensors of all orders.

\section{Notation} \label{sec:nota}

\subsection{Some moments and cross-moments}

\def\bull{\mbox{\LARGE{\raisebox{-3pt}{$\cdot$}}}}

Next, we define some particular moments and expressions based on the moments of the elements of the i.i.d. random vectors $\textbf{z}_i$ from the distribution of $\textbf{z} \in \mathbb{R}^{p}$ and i.i.d. random matrices $\textbf{Z}_i$ from the distribution of $\textbf{Z} \in \mathbb{R}^{p \times q}$. The components of $\textbf{z}$ and $\textbf{Z}$ are mutually independent and standardized to have zero means and unit variances. Beginning with the marginal moments of the vectors we write
\[ \gamma_k := \E (z_{k}^3), \quad \beta_k := \E (z_{k}^4), \quad \text{and} \quad \omega_k := \var (z_{k}^3), \quad \forall k=1, \ldots ,p. \]
% \gamma_k := E[z_{k}^3], \quad ...  = E[z_{k}^6] - \gamma_k^2
For the matrix version we require the same moments and thus define
\[ \gamma_{kl} := \E (z_{kl}^3), \quad \beta_{kl} := \E (z_{kl}^4), \quad \text{and} \quad \omega_{kl} := \var (z_{kl}^3), \quad \forall k=1, \ldots ,p, \, \forall l=1, \ldots ,q. \]
% \gamma_{kl} := E[z_{kl}^3], \quad  ... = E[z_{kl}^6] - \gamma_{kl}^2
%Furthermore we define the peculiar quantities
%\[\lambda_k := \beta_k + p - 1, \quad \lambda^L_{kl} := \beta_{kl} + p - 1 \quad \text{and} \quad \lambda^R_{kl} := \beta_{kl} + q - 1, \]
%where the capital L and R refer to ``left'' and ``right'', respectively.
Interestingly, MFOBI involves the row and column means of the previously defined moments and we use the notation $\bar{\alpha}_{k\bull}$ to denote taking the average over the values of the bulleted index, e.g., $\bar{\omega}_{k\bull} = (1/q)\sum_{l} \omega_{kl}$. Additionally, we are going to need the covariance of two rows of kurtoses and define $\delta_{kk'} = (1/q)\sum_{l} \beta_{kl} \beta_{k'l} - \bar{\beta}_{k\bull} \bar{\beta}_{k'\bull}$.

For the asymptotic behavior of FOBI we require the following cross-moment estimates for distinct $k, k', m = 1, \ldots ,p$:
\begin{align*}
\hat{s}_{kk'} := \frac{1}{n}\sum_{i=1}^n z_{i,k} z_{i,k'}, \quad \hat{q}_{kk'} := \frac{1}{n}\sum_{i=1}^n (z_{i,k}^3 - \gamma_k) z_{i,k'} \quad \text{and} \quad \hat{q}_{mkk'} := \frac{1}{n}\sum_{i=1}^n z_{i,m}^2 z_{i,k} z_{i,k'}.
\end{align*}

For their matrix counterparts, we need both the ``left'' and ``right'' versions, e.g.,
\begin{align*}
\bar{s}^L_{kk'} := \frac{1}{q} \sum_{l=1} \hi  q \left( \frac{1}{n} \sum_{i=1}^n z_{i,kl} z_{i,k'l} \right), \quad \bar{s}^R_{ll'} := \frac{1}{p} \sum_{k=1}\hi p \left( \frac{1}{n} \sum_{i=1}^n z_{i,kl} z_{i,kl'} \right),
\end{align*}
where a bar ($\bar{a}$ instead of $\hat{a}$) is used to emphasize the taking of the mean and to avoid confusion with $\hat{s}_{kk'}$. Notice also how $\bar{s}^L_{kk'}$ and $\bar{s}^R_{ll'}$ are again the row and column averages of the corresponding vector quantities. We also see that the right-hand side version of the quantity is obtained from the left-hand side version by simply reversing the roles of rows and columns (or transposing the matrices $\textbf{Z}_i$). Due to this connection we next state only the left-hand side versions of the remaining needed quantities, also omitting the superscript ``L'':
\begin{align*}
\bar{q}_{kk'} :=  \frac{1}{q} \sum_{l=1}\hi q \left\{ \frac{1}{n}\sum_{i=1}^n (z_{i,kl}^3 - \gamma_{kl}) z_{i,k'l} \right\}, \
 \bar{q}_{mkk'} := \frac{1}{q} \sum_{l=1} \hi q \left\{ \frac{1}{n}\sum_{i=1}^n z_{i,ml}^2 z_{i,kl}  z_{i,k'l} \right\},
\end{align*}
and the following which lack a vector counterpart:
\begin{align*}
&\bar{r}_{kk'} := \frac{1}{q} \sum_{l = 1}^q \sum_{l' = 1, \, l' \neq l}^q \left( \frac{1}{n}\sum_{i=1}^n z_{i,kl}^2 z_{i,kl'} z_{i,k'l'} \right), \quad \bar{r}^0_{mkk'} := \frac{1}{q} \sum_{l = 1}^q \sum_{l' = 1, \, l' \neq l}^q \left( \frac{1}{n}\sum_{i=1}^n z_{i,kl} z_{i,ml} z_{i,ml'} z_{i,k'l'} \right) \quad \text{and} \\
& \hspace{3.5cm} \bar{r}^1_{mkk'} := \frac{1}{q} \sum_{l = 1}^q \sum_{l' = 1, \, l' \neq l}^q \left( \frac{1}{n}\sum_{i=1}^n z_{i,ml}^2 z_{i,kl'} z_{i,k'l'} \right).
\end{align*}
%\bar{r}_{kk'} &:= \frac{1}{q} \sum_{\substack{l,l' \in \mathcal{L} \\ l \neq l'}} \left( \frac{1}{n}\sum_{i=1}^n z_{i,kl}^2 z_{i,kl'} z_{i,k'l'} \right),\\
Assuming that the eighth moments of $\textbf{Z}$ exist the joint limiting distribution of the above quantities can be shown to be multivariate normal. Additional properties of the quantities are discussed in the proof of Theorem \ref{theo:micm_asymp} in Section \ref{sec:asymp}. Furthermore, similar quantities could also be defined for random tensors, but they are not needed in the exposition as it is later shown that the asymptotical behavior of TFOBI reduces to that of MFOBI.
%It is worthwhile to observe that in case $q = 1$, all the left-hand side matrix version quantities either vanish or revert to the corresponding vector version expressions (this is because we treat vectors as $p \times 1$ matrices).

\subsection{Notations for matrices and sets of matrices}

%The vectorization of a matrix is done in the traditional column-wise fashion, that is, $\vec (\textbf{X}) = \vec ((\textbf{x}_1|\textbf{x}_2|...|\textbf{x}_q)) = (\textbf{x}_1^T, \textbf{x}_2^T, ..., \textbf{x}_q^T)^T$. A useful formula connecting the vectorization with Kronecker product is then given as $\vec (\textbf{A}\textbf{X}\textbf{B}^T) = (\textbf{B} \otimes \textbf{A}) \vec (\textbf{X})$.

%The standard basis vectors of $\mathbb{R}^p$ are denoted by $\textbf{e}_k, \, k \in \mathcal{K}$ and using them we further define $\textbf{E}^{kk'} := \textbf{e}_k \textbf{e}_{k'}^T$, a matrix with a single one in position $(k, k')$ and zeroes elsewhere.

An inverse square root $\textbf{S}^{-1/2}$ of a symmetric, positive definite matrix $\textbf{S} \in \mathbb{R}^{p \times p}$ is any matrix $\textbf{G} \in \mathbb{R}^{p \times p}$ satisfying $\textbf{G} \textbf{S} \textbf{G}^\top = \textbf{I}_p$. Given the eigendecomposition of the matrix $\textbf{S} = \textbf{U} \textbf{D} \textbf{U}^\top$, all possible inverse square root
matrices of $\textbf{S}$ are of the form $\textbf{V} \textbf{D}^{-1/2} \textbf{U}^\top$, where $\textbf{V} \in \mathbb{R}^{p \times p}$ is an orthogonal matrix. If $\textbf{S}$ has distinct eigenvalues, then a unique, symmetric choice for $\textbf{S}^{-1/2}$ is $\textbf{U} \textbf{D}^{-1/2} \textbf{U}^\top$, see, e.g., \cite{ilmonen2012invariant}.

The $p$-vector $\textbf{e}_k$, $k=1, \ldots ,p$, is a vector with $k$th element one and other elements zero and ${\textbf{E}}^{kl} := \textbf{e}_k \textbf{e}_l^\top$ is a $p\times p$ matrix with $(k,l)$-element one and other elements zero, $k,l=1, \ldots ,p$. Note that $\textbf{I}_{p}=\sum_{k=1}^p \textbf{E}^{kk}$ and all diagonal matrices with diagonal elements  $c_1, \ldots ,c_p$
can be written as $\sum_{k=1}^p c_k \textbf{E}^{kk}$.

%\begin{theorem}\label{theo:nota_invsqrt}
%For a symmetric and positive definite $\textbf{S} \in \mathbb{R}^{p \times p}$, $\textbf{G} \in \mathbb{R}^{p \times p}$ can be regarded as $\textbf{S}^{-1/2}$ if and only if
%\[\textbf{G} \in \{ \textbf{V} \textbf{D}^{-1/2} \textbf{U}^T \, | \, \textbf{V} \text{ orthogonal and } \textbf{S} = \textbf{U} \textbf{D} \textbf{U}^T \}, \]
%where $\textbf{U} \textbf{D} \textbf{U}^T$ is the eigendecomposition of $\textbf{S}$.
%\end{theorem}

Table \ref{tab:nota_sets} lists some particular sets of (affine transformation) matrices used in the following sections. A permutation matrix is obtained if we permute the rows and/or columns of an identity matrix. A heterogeneous sign-change matrix is a diagonal matrix with diagonal entries $\pm 1$. A heterogeneous scaling matrix is a diagonal matrix with positive diagonal entries.

%To formulate MFOBI in Section \ref{sec:fold} in as similar way to FOBI as possible we define the \textit{covariance matrix of a random matrix $\textbf{X} \in \mathbb{R}^{p \times q}$} to be $\cov (\textbf{X}) := (1/q) E\left[(\textbf{X}-E(\textbf{X})) (\textbf{X}-E(\textbf{X}))^T\right]$. Notice that generally $\cov (\textbf{X}) \neq \cov (\textbf{X}^T)$. The use of $\cov (\textbf{X})$ for matrix observations has been considered already in \cite{srivastava2008models}

\begin{table}[h]
\centering
\caption{Some useful sets of square matrices}
\begin{tabular}{l|l}
Set & Description \\
\hline
$\mathcal{A}^r$ & The set of all $r \times r$ non-singular matrices. \\
$\mathcal{U}^r$ & The set of all $r \times r$ orthogonal matrices. \\
\hline
$\mathcal{P}^r$ & The set of all $r \times r$ permutation matrices. \\
$\mathcal{J}^r$ & The set of all $r \times r$ heterogeneous sign-change matrices. \\
$\mathcal{D}^r$ & The set of all $r \times r$ heterogeneous scaling matrices. \\
$\mathcal{C}^r$ & The set of all matrices $\textbf{PJD}$, where $\textbf{P} \in \mathcal{P}^r$, $\textbf{J} \in \mathcal{J}^r$ and $\textbf{D} \in \mathcal{D}^r$. \\
%\hline
%$\mathcal{K}$ & The index set $\{1,...,p\}$. \\
%$\mathcal{L}$ & The index set $\{1,...,q\}$. \\
\end{tabular}

\label{tab:nota_sets}
\end{table}

%\subsection{Double transformations}

%The natural matrix counterpart for linear transformations on vectors is given by what we call doubly transforming a matrix. Namely, we define the \textit{double affine transformation} of a matrix $\textbf{Z} \in \mathbb{R}^{p \times q}$ by the matrices $\textbf{A} \in \mathcal{A}^p$ and $\textbf{B} \in \mathcal{A}^q$ to be the transformation $\textbf{Z} \mapsto \textbf{A} \textbf{Z} \textbf{B}^T$. Using the previously defined matrix sets we can now consider some subsets of the class of  all double transformations on the matrix $\textbf{Z}$, for example the \textit{double rotation} by $\textbf{U} \in \mathcal{U}^p$ and $\textbf{V} \in \mathcal{U}^q$ or the \textit{double scaling} by $\textbf{D} \in \mathcal{D}^p$ and $\textbf{E} \in \mathcal{D}^q$.
%(notice the transpose on the matrix $\textbf{B}$)

%Intuitively enough, if $p=1$ or $q=1$, all the previous double transformations revert to their ordinary vector counterparts. For example, if $q=1$, the only ``orthogonal matrix'' of size $1 \times 1$ is $(1)$ and the double orthogonal transformation by $\textbf{U} \in \mathcal{U}^p$ and $\textbf{V} \in \mathcal{U}^1$ is simply $\textbf{z} \mapsto \textbf{U} \textbf{z}$.

\section{Independent component models}\label{sec:icm}

In this section we derive the basic model behind MFOBI by expanding the classic \textit{independent component model} from vector-valued to matrix-valued observations. %We begin by reviewing the standard \textit{independent component model} for random vectors and using that to formulate the analogous matrix version.

\subsection{Vector-valued independent component model}

\begin{definition}\label{def:vicm}
The vector-valued independent component model assumes that the observed i.i.d. variables $\textbf{x}_i \in \mathbb{R}^p$, $i=1, \ldots ,n$, are realisations of a random vector $\textbf{x}$ satisfying
\[\textbf{x} = \boldsymbol{\mu} + \boldsymbol{\Omega} \textbf{z}, \]
where $\boldsymbol{\mu} \in \mathbb{R}^p, \boldsymbol{\Omega} \in \mathcal{A}^p$ and the random vector $\textbf{z} \in \mathbb{R}^p$ satisfies Assumptions \ref{assu:vicm_const} and \ref{assu:vicm_gauss} below.
\end{definition}

\begin{assumptionvec}\label{assu:vicm_const}
The components $z_k$ of $\textbf{z}$ are mutually independent and standardized in the sense that $\E (z_k) = 0$ and $\var (z_k) = 1$.
\end{assumptionvec}

\begin{assumptionvec}\label{assu:vicm_gauss}
At most one of the components $z_k$ of $\textbf{z}$ is normally distributed.
\end{assumptionvec}

Without Assumption \ref{assu:vicm_const} the model itself in Definition \ref{def:vicm} is not well-defined in the sense that replacing $\boldsymbol{\Omega}$ and $\textbf{z}$ with $\boldsymbol{\Omega}^* = \boldsymbol{\Omega} \textbf{C}$ and $\textbf{z}^* = \textbf{C}^{-1} \textbf{z}$, for some $\textbf{C} \in \mathcal{C}^p$, yields exactly the same model for $\textbf{x}$. The standardization part of Assumption \ref{assu:vicm_const} can thus be regarded as an identification constraint that removes some of the ambiguity present in the formulation of the model by fixing the locations and scales of the components of $\textbf{z}$. Assumption \ref{assu:vicm_gauss}, on the other hand, is necessitated by the rotational invariance of the multivariate Gaussian distribution. Namely, assume, e.g., that the first two components of $\textbf{z}$ are Gaussian. Then the corresponding subvector is distributionally invariant under rotations and the first two columns of $\boldsymbol{\Omega}$ could be identified only up to a $2 \times 2$ rotation. Thus only a single normally distributed component is allowed. After Assumptions \ref{assu:vicm_const} and \ref{assu:vicm_gauss} we are then left with ambiguity regarding the signs and the order of the independent components which is satisfactory in most applications.

%Also, for the asymptotic distribution of the solution to exist, it is required that no two of the independent components have the same kurtosis value.

\subsection{Matrix-valued independent component model}
The matrix-valued independent component model is now obtained simply by adding right-hand side mixing to the vector-valued independent component model.

\begin{definition}\label{def:micm}
The matrix-valued independent component model assumes that the observed i.i.d. variables $\textbf{X}_i \in \mathbb{R}^{p \times q}$, $i=1, \ldots ,n$, are realisations of a random matrix $\textbf{X}$ satisfying
\[\textbf{X} = \boldsymbol{\mu} + \boldsymbol{\Omega}_L \textbf{Z} \boldsymbol{\Omega}{}_R^\top, \]
where $\boldsymbol{\mu} \in \mathbb{R}^{p \times q}, \boldsymbol{\Omega}_L \in \mathcal{A}^p$, $\boldsymbol{\Omega}_R \in \mathcal{A}^q$ and the random matrix $\textbf{Z}_i \in \mathbb{R}^{p \times q}$ satisfies Assumptions \ref{assu:micm_const} and \ref{assu:micm_gauss} below.
\end{definition}

%\rotatebox[origin=c]{180}{$\boldsymbol{\Omega}$}

\begin{assumptionmat} \label{assu:micm_const}
The components $z_{kl}$ of $\textbf{Z}$ are mutually independent and standardized in the sense that $\E (z_{kl}) = 0$ and $\var (z_{kl}) = 1$.
\end{assumptionmat}

\begin{assumptionmat}\label{assu:micm_gauss}
At most one row of $\textbf{Z}$ consists entirely of Gaussian components and at most one column of $\textbf{Z}$ consists entirely of Gaussian components.
\end{assumptionmat}

The assumptions now guarantee that $\boldsymbol{\Omega}_L$ and $\boldsymbol{\Omega}_R$  are well-defined up to postmultiplication by any matrices $\textbf{PJ}$, $\textbf{P} \in \mathcal{P}^p, \textbf{J} \in \mathcal{J}^p$ or $\textbf{P} \in \mathcal{P}^q, \textbf{J} \in \mathcal{J}^q$, respectively. Thus the first assumption serves again to remove the ambiguity concerning the location of $\textbf{Z}$ and the scales of the columns of $\boldsymbol{\Omega}_L$ and $\boldsymbol{\Omega}_R$, leaving us with the acceptable uncertainty of the signs and order. Again without assumption \ref{assu:micm_gauss}, if, e.g., the first two rows of $\textbf{Z}$ were Gaussian then the first two columns of $\boldsymbol{\Omega}_L$ could be identified only up to a $2\times 2$ rotation. Note that we could still estimate those columns of $\boldsymbol{\Omega}_L$ that correspond to non-Gaussian rows of $\textbf{Z}$ but the successful use of such a method in practice would require some way of estimating or testing for the number of non-Gaussian rows in $\textbf{Z}$. Such a problem is considered for vector-valued ICA in \cite{nordhausen2016asymptotic, nordhausen2017asymptotic} and extending the method to matrix and tensor observations constitutes an interesting future challenge. After the assumptions there is still ambiguity in the proportional sizes of the mixing matrices as the transformations  $\boldsymbol{\Omega}_L\to c \boldsymbol{\Omega}_L$ and $\boldsymbol{\Omega}_R\to c^{-1} \boldsymbol{\Omega}_R$, $c\neq0$, do not change the distribution of $\textbf{X}$. The number of free mixing parameters is therefore $p^2+q^2-1$.

%Finally, observe that the matrix-valued independent component model along with its assumptions reverts to the vector-valued independent component model if $q=1$.

\section{From FOBI to MFOBI}\label{sec:fold}

Taking the same approach as with the independent component models in the previous section, we first review the steps of the classic FOBI procedure, that is, standardization and rotation, for vector-valued data and then suggest a similar procedure for matrix-valued data, called MFOBI, using similar but separate steps from both sides of the matrices.

\subsection{Fourth order blind identification (FOBI)}

Without loss of generality, we assume in the following that the random vector $\textbf{x} \in \mathbb{R}^{p}$ has zero mean, that is, $\boldsymbol{\mu} = \textbf{0}_p $ in the model of Definition \ref{def:vicm}. Note that the following exposition is not the standard way to approach FOBI. However, presenting it this way makes the formulation of MFOBI more intuitive.

We piece together the FOBI-solution by considering the singular value decomposition of the mixing matrix $\boldsymbol{\Omega} = \textbf{U} \textbf{D} \textbf{V}^\tau$, where $\textbf{U}, \textbf{V} \in \mathcal{U}^p$ and $\textbf{D} \in \mathcal{D}^p$ (the diagonal elements of $\textbf{D}$ can be chosen to be positive as the matrix $\boldsymbol{\Omega}$ was assumed to have full rank). The model then has the form
\[\textbf{x} = \textbf{U} \textbf{D} \textbf{V}^\tau \textbf{z}.\]
In this form it is easy to break down the steps in which we gradually  ``lose'' the independence of the components of $\textbf{z}$ and move towards the observed  $\textbf{x}$.
\begin{itemize}
\item[0.] The vector of independent components $\textbf{z}$ has independent components and unit component variances, $\cov ( \textbf{z} ) = \textbf{I}_p$.
\item[1.] The vector of standardized components $\textbf{x}^{st} := \textbf{V}{}^\top \textbf{z}$ has uncorrelated components and unit component variances, $\cov ( \textbf{x}^{st} ) = \textbf{I}_p$.
\item[2.] The vector of uncorrelated components $\textbf{x}^{un} := \textbf{D} \textbf{x}^{st}$ has uncorrelated components, $\cov ( \textbf{x}^{un} ) = \textbf{D}^2$.
\item[3.] The observed vector $\textbf{x} = \textbf{U} \textbf{x}^{un}$ has (generally) correlated components, $\cov ( \textbf{x} ) = \textbf{U} \textbf{D}^2 \textbf{U}^\top$.
\end{itemize}
That is, in Step 1 we lose independence, in the second step the unit variances and finally in the third step the uncorrelatedness. For the solution we then hope to carry out these steps in the reversed order.

\subsubsection{Standardization}

The first step in FOBI consists of standardizing $\textbf{x}$ with an inverse square root of its covariance matrix $\cov (\textbf{x}) =: \textbf{S}$. As
$\textbf{S}= \textbf{U} \textbf{D}^{2} \textbf{U}^\top $ one can choose any matrix  of the form $\textbf{S}^{-1/2}=  \textbf{M}  \textbf{D}^{-1} \textbf{U}^\top$, where $\textbf{M} \in \mathcal{U}^p$. This yields the transformation
\begin{align} \label{theo:vec_stand}
\textbf{x} \mapsto \textbf{S}^{-1/2} \textbf{x} = \textbf{M} \textbf{x}^{st} = \textbf{W} \textbf{z},
\end{align}
where $\textbf{W} := \textbf{M} \textbf{V}^T \in \mathcal{U}^p$. Thus the standardization part moves us directly from $\textbf{x}$ to a standardized random vector and leaves us a rotation away from the independent components.

%(note that the resulting standardized variable is not generally equal to the one in the definition above, only up to a rotation).

%Note that $\textbf{x}^{st}$ in the above definition is generally equal to the definition above only up to an orthogonal transformation.

%Due to this, standardization is usually the first step in many independent component methods, see for example \cite{miettinen2014fourth} and \cite{virta2015joint}.

\subsubsection{Rotation}

To estimate the orthogonal matrix $\textbf{W}^\top$ that rotates the standardized observation in \eqref{theo:vec_stand} to the vector of independent components we use the so-called FOBI-matrix functional, $\textbf{B}(\textbf{x})= \E (\textbf{x} \textbf{x}^\top \textbf{x} \textbf{x}^\top )$. Plugging the standardized vector in, we have
\[\textbf{B}:= \textbf{B}(\textbf{W} \textbf{z})=\textbf{W}  \textbf{B}(\textbf{z})\textbf{W}^\top \]
 where
 \[ \textbf{B}(\textbf{z}) = \E (\textbf{z} \textbf{z}^\top \textbf{z}\textbf{z}^\top )  =   \sum_{k=1}^p (\beta_k + p - 1) \textbf{E}^{kk}  \]
is a diagonal matrix. Therefore,  the orthogonal matrix $\textbf{W}$ can be found from the eigendecomposition of the matrix $\textbf{B}$. However, for the eigenbasis of $\textbf{B}$ to be identifiable, we must make the following assumption that can be seen as a stronger version of Assumption \ref{assu:vicm_gauss}.

\begin{assumptionvec}\label{assu:vicm_asymp}
The kurtosis values $\beta_1, \ldots ,\beta_p$  of the components of $\textbf{z}$ are distinct.
\end{assumptionvec}

The recovering of the independent components by FOBI is then captured by the following formula.
\[\textbf{x} \mapsto \textbf{W}^\top \textbf{S}^{-1/2} \textbf{x}. \]
This process consisting of standardization and rotation will next be translated for matrix-valued observations in an intuitively appealing manner.

\subsection{Matrix fourth order blind identification (MFOBI)}

Without loss of generality, assume that the random matrix $\textbf{X} \in \mathbb{R}^{p \times q}$ has zero mean, that is, $\boldsymbol{\mu} = \textbf{0}_{p \times q} $ in the model of Definition \ref{def:micm}. Resorting again to the singular value decompositions of the full-rank mixing matrices $\boldsymbol{\Omega}_L$ and $\boldsymbol{\Omega}_R$, the model in Definition \ref{def:micm} gets the form
\begin{align*}
\textbf{X} = \boldsymbol{\Omega}_L \textbf{Z} \boldsymbol{\Omega}{}_R^\top = \textbf{U}_{L} \textbf{D}_{L} \textbf{V}_{L}^\top \textbf{Z} \textbf{V}_{R} \textbf{D}_{R} \textbf{U}_{R}^\top.
\end{align*}
Again the diagonal elements of $\textbf{D}_{L}$ and $\textbf{D}_{R}$ can be chosen to be positive.

We then apply a similar analysis for the double mixing process of $\textbf{Z}$ as was done with FOBI previously.
\begin{itemize}
\item[0.] The random matrix  $\textbf{Z}$ has independent components and unit component variances, $\cov \{ \vec (\textbf{Z}) \} = \textbf{I}_{pq}$.
\item[1.] The matrix of standardized components $\textbf{X}^{st} := \textbf{V}_{L}^\top \textbf{Z} \textbf{V}_{R}$ has uncorrelated components and unit component variances, $\cov \{ \vec (\textbf{X}^{st}) \} = \textbf{I}_{pq}$.
\item[2.] The matrix of uncorrelated components $\textbf{X}^{un} := \textbf{D}_{L} \textbf{X}^{st} \textbf{D}_{R}$ has uncorrelated components, $\cov \{ \vec (\textbf{X}^{un}) \} = (\textbf{D}_{R}^2 \otimes \textbf{D}_{L}^2)$.
\item[3.] The observed matrix $\textbf{X} = \textbf{U}_{L} \textbf{X}^{un} \textbf{U}_{R}^\top$ has (generally) correlated components,
%$\cov (\vec (\textbf{X})) = (\textbf{U}_{\boldsymbol{\Upsilon}} \otimes \textbf{U}_{\boldsymbol{\Omega}}) (\textbf{D}_{\boldsymbol{\Upsilon}}^2 \otimes \textbf{D}_{\boldsymbol{\Omega}}^2) (\textbf{U}_{\boldsymbol{\Upsilon}}^T \otimes \textbf{U}_{\boldsymbol{\Omega}}^T)$.
$\cov \{ \vec (\textbf{X}) \} = (\textbf{U}_{R} \textbf{D}_{R}^2 \textbf{U}_{R}^\top) \otimes (\textbf{U}_{L}\textbf{D}_{L}^2\textbf{U}_{L}^\top)$.

\end{itemize}

We see that the observed matrix $\textbf{X}$ is built from the matrix of independent components $\textbf{Z}$ in three steps exactly corresponding to the likewise process on random vectors outlined in the section before. Again our objective is to reverse this process.

\subsubsection{Double standardization}

We begin by finding a matrix counterpart for the standardization that provides the first step in FOBI. The presence of a double-sided mixing makes it clear that the standardization has to be performed on $\textbf{X}$ from both left and right. Define the left and right covariance matrices of a zero-mean random matrix $\textbf{X} \in \mathbb{R}^{p \times q}$ as
\[\cov_L ( \textbf{X} ) := \frac{1}{q} \E \left( \textbf{X} \textbf{X}^\top \right) \quad \mbox{and} \quad \cov_R(\textbf{X}) := \frac{1}{p} \E \left( \textbf{X}^\top\textbf{X} \right), \]
The use of $\cov_L$ and $\cov_R$ for matrix observations has been considered already in \cite{srivastava2008models}.
Consider then the left covariance matrix of $\textbf{X}$ in the matrix independent component model of Definition \ref{def:micm},
\begin{align} \label{eq:micm_covxeigen}
\textbf{S}_L := \cov_L ( \textbf{X} ) = \frac{1}{q} \E \left( \textbf{X} \textbf{X}^\top \right) = \frac{1}{q} \textbf{U}_{L} \E \left\{ \textbf{X}^{un} (\textbf{X}^{un})^\top \right\} \textbf{U}_{L}^\top,
\end{align}
where straightforward calculations show that $\E \left\{ \textbf{X}^{un} (\textbf{X}^{un})^\top \right\} = \tra (\textbf{D}_{R}^2) \textbf{D}_{L}^2$
Thus \eqref{eq:micm_covxeigen} provides the eigendecomposition of $\textbf{S}_L$ and all of its inverse square roots are precisely of the form $\sqrt{q} \| \textbf{D}_{R} \|_F^{-1} \textbf{M}_L \textbf{D}_{L}^{-1} \textbf{U}_{L}^\top$, where $\textbf{M}_L \in \mathcal{U}^p$ and $\| \cdot \|_F$ denotes the Frobenius norm. The exact same procedure for the right covariance matrix of $\textbf{X}$ yields
\begin{align*}
\textbf{S}_R := \cov_R ( \textbf{X} ) = \frac{1}{p} \E \left( \textbf{X}^T \textbf{X} \right) = \frac{1}{p} \textbf{U}_{R} \E \left\{ (\textbf{X}^{un})^T \textbf{X}^{un} \right\} \textbf{U}_{R}^\top,
\end{align*}
where $\E \left\{ (\textbf{X}^{un})^T \textbf{X}^{un} \right\} = \tra ( \textbf{D}_{L}^2 ) \textbf{D}_{R}^2$ and the inverse square roots of $\textbf{S}_R$ are precisely of the form $\sqrt{p} \| \textbf{D}_{L} \|_F^{-1} \textbf{M}_R \textbf{D}_{R}^{-1} \textbf{U}_{R}^\top$, where $\textbf{M}_R \in \mathcal{U}^q$.

Using then the inverse square roots of $\textbf{S}_L$ and $\textbf{S}_R$ to doubly standardize the data, we obtain the transformation
\[\textbf{X} \mapsto \textbf{S}_L^{-1/2} \textbf{X} (\textbf{S}_R^{-1/2})^\top = \sqrt{pq} \| \textbf{D}_{L} \|_F^{-1} \| \textbf{D}_{R} \|_F^{-1} \textbf{M}_L \textbf{V}_{L}^\top \textbf{Z} \textbf{V}_{R} \textbf{M}_R^\top. \]
Denoting $\textbf{W}_L := \textbf{M}_L \textbf{V}_{L}^\top \in \mathcal{U}^p$ and $\textbf{W}_R := \textbf{M}_R \textbf{V}_{R}^\top  \in \mathcal{U}^q$ we have the following theorem.

\begin{theorem}\label{theo:micm_stand}
Denote by $\textbf{S}_L^{-1/2}$ and $\textbf{S}_R^{-1/2}$ any inverse square roots of the matrices $\cov_L (\textbf{X})$ and $\cov_R (\textbf{X})$, respectively. Then, under the matrix independent component model of Definition \ref{def:micm},
\[\textbf{S}_L^{-1/2} \textbf{X} (\textbf{S}_R^{-1/2})^\tau \propto \textbf{W}_L \textbf{Z} \textbf{W}_R^\tau, \]
where $\textbf{W}_L \in \mathcal{U}^p$ and $\textbf{W}_R \in \mathcal{U}^q$.
\end{theorem}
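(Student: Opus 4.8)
The plan is to exploit the singular value decompositions $\boldsymbol{\Omega}_L = \textbf{U}_L \textbf{D}_L \textbf{V}_L^\top$ and $\boldsymbol{\Omega}_R = \textbf{U}_R \textbf{D}_R \textbf{V}_R^\top$ already introduced above: I would compute the two one-sided covariance matrices $\textbf{S}_L = \cov_L(\textbf{X})$ and $\textbf{S}_R = \cov_R(\textbf{X})$ in closed form, read off the full family of their inverse square roots, and then substitute these into the double-standardized product $\textbf{S}_L^{-1/2}\textbf{X}(\textbf{S}_R^{-1/2})^\top$ and watch the diagonal and orthogonal factors cancel.

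The crux is the single moment identity that powers both covariance computations: for any fixed symmetric $\textbf{A} \in \real^{q \times q}$ and a random matrix $\textbf{Z}$ whose entries are independent and standardized (Assumption \ref{assu:micm_const}), $\E(\textbf{Z}\textbf{A}\textbf{Z}^\top) = \tra(\textbf{A})\,\textbf{I}_p$. First I would verify this entrywise: the $(i,j)$ element of $\textbf{Z}\textbf{A}\textbf{Z}^\top$ is $\sum_{k,l} z_{ik} a_{kl} z_{jl}$, and $\E(z_{ik} z_{jl}) = \delta_{ij}\delta_{kl}$ by independence and unit variance, leaving $\delta_{ij}\tra(\textbf{A})$. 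Applying this with $\textbf{A} = \textbf{V}_R \textbf{D}_R^2 \textbf{V}_R^\top$ to $\textbf{X}^{un} = \textbf{D}_L \textbf{V}_L^\top \textbf{Z} \textbf{V}_R \textbf{D}_R$ immediately gives $\E\{\textbf{X}^{un}(\textbf{X}^{un})^\top\} = \tra(\textbf{D}_R^2)\,\textbf{D}_L^2$, and by symmetry (interchanging the roles of rows and columns) $\E\{(\textbf{X}^{un})^\top \textbf{X}^{un}\} = \tra(\textbf{D}_L^2)\,\textbf{D}_R^2$. These are exactly the identities asserted before the theorem, and they exhibit $\textbf{S}_L = q^{-1}\tra(\textbf{D}_R^2)\,\textbf{U}_L \textbf{D}_L^2 \textbf{U}_L^\top$ as an eigendecomposition, so that every inverse square root has the form $\sqrt{q}\,\|\textbf{D}_R\|_F^{-1} \textbf{M}_L \textbf{D}_L^{-1} \textbf{U}_L^\top$ with $\textbf{M}_L \in \mathcal{U}^p$ arbitrary, and analogously for $\textbf{S}_R$.

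With these forms in hand, substituting $\textbf{X} = \textbf{U}_L \textbf{D}_L \textbf{V}_L^\top \textbf{Z} \textbf{V}_R \textbf{D}_R \textbf{U}_R^\top$ into the double-standardized product makes the inner factors $\textbf{U}_L^\top \textbf{U}_L = \textbf{I}_p$ and $\textbf{U}_R^\top \textbf{U}_R = \textbf{I}_q$ collapse and the diagonal factors telescope as $\textbf{D}_L^{-1}\textbf{D}_L = \textbf{I}_p$ and $\textbf{D}_R \textbf{D}_R^{-1} = \textbf{I}_q$, leaving $\sqrt{pq}\,\|\textbf{D}_L\|_F^{-1}\|\textbf{D}_R\|_F^{-1}\,\textbf{M}_L \textbf{V}_L^\top \textbf{Z} \textbf{V}_R \textbf{M}_R^\top$. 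Setting $\textbf{W}_L := \textbf{M}_L \textbf{V}_L^\top$ and $\textbf{W}_R := \textbf{M}_R \textbf{V}_R^\top$, each a product of orthogonal matrices and hence itself orthogonal, and absorbing the positive scalar $\sqrt{pq}\,\|\textbf{D}_L\|_F^{-1}\|\textbf{D}_R\|_F^{-1}$ into the proportionality sign, yields the claim.

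I expect the only genuine subtlety to be the bookkeeping around the non-uniqueness of the inverse square roots: because the theorem permits \emph{any} inverse square roots, I must carry the free orthogonal factors $\textbf{M}_L, \textbf{M}_R$ through the entire calculation and confirm they do no harm, being harmlessly absorbed into $\textbf{W}_L$ and $\textbf{W}_R$, which the statement only requires to be orthogonal rather than uniquely determined. Everything else is orthogonal and diagonal cancellation, and the observation that the leftover factor is a scalar rather than a matrix is precisely what licenses writing $\propto$ in place of equality.
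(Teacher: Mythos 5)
Your proposal is correct and follows essentially the same route as the paper: exploit the SVDs of $\boldsymbol{\Omega}_L$ and $\boldsymbol{\Omega}_R$, compute $\textbf{S}_L$ and $\textbf{S}_R$ to read off their eigendecompositions and the full family of inverse square roots $\sqrt{q}\,\|\textbf{D}_R\|_F^{-1}\textbf{M}_L\textbf{D}_L^{-1}\textbf{U}_L^\top$ (resp. $\sqrt{p}\,\|\textbf{D}_L\|_F^{-1}\textbf{M}_R\textbf{D}_R^{-1}\textbf{U}_R^\top$), substitute, cancel, and absorb $\textbf{M}_L,\textbf{M}_R$ into $\textbf{W}_L,\textbf{W}_R$. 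The only difference is that you make explicit, via the identity $\E(\textbf{Z}\textbf{A}\textbf{Z}^\top)=\tra(\textbf{A})\,\textbf{I}_p$, the "straightforward calculations" that the paper leaves to the reader.
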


Theorem \ref{theo:micm_stand} thus says that the double standardization by $\textbf{S}{}_L^{-1/2}$ and $\textbf{S}{}_R^{-1/2}$ is a natural counterpart of the standardization of a random vector $\textbf{z}$ by $\textbf{S}{}^{-1/2}$, again leaving us only a (double) rotation away from independent components. %Although the resulting standardized matrix of Theorem \ref{theo:micm_stand} generally equals the matrix in the definition of $\textbf{X}^{st}$ above only up to a double rotation, we will still notate it as such.

\subsubsection{Double rotation}

We next approach the rotation part with the same mindset. First, notice that we have two logical matrix counterparts for the FOBI functional $\textbf{B}(\textbf{x})$, namely
\begin{align*}
\textbf{B}^0(\textbf{X}) :=  \E \left( \textbf{X}\textbf{X}^\top \textbf{X}\textbf{X}^\top \right) \quad \mbox{and} \quad \textbf{B}^1(\textbf{X}) :=
\E \left( \|\textbf{X}\|_F^2 \textbf{X}\textbf{X}^\top \right),
\end{align*}
both reducing to the ordinary FOBI-matrix functional $\textbf{B}(\textbf{x})$ if $\textbf{X}$ has only one column. For finding the rotations we then use either the pair
\begin{align*} \textbf{B}^0_L := \frac {1}{q}  \textbf{B}^0 \left\{ \textbf{S}_L^{-1/2} \textbf{X} (\textbf{S}_R^{-1/2})^\top \right\} \quad \mbox{and} \quad \textbf{B}^0_R := \frac {1}{p} \textbf{B}^0 \left\{ \textbf{S}_R^{-1/2} \textbf{X}^\top (\textbf{S}_L^{-1/2})^\top \right\} ,
\end{align*}
or the pair
\begin{align*} \textbf{B}^1_L := \frac{1}{q} \textbf{B}^1 \left\{ \textbf{S}_L^{-1/2} \textbf{X} (\textbf{S}_R^{-1/2})^\top \right\} \quad \mbox{and} \quad \textbf{B}^1_R := \frac{1}{p} \textbf{B}^1 \left\{ \textbf{S}_R^{-1/2} \textbf{X}^\top (\textbf{S}_L^{-1/2})^\top \right\}.
\end{align*}
%The left matrices of both pairs revert to the FOBI matrix $\textbf{B}$ if $q=1$.
Write next $\tau := \sqrt{pq} \| \textbf{D}_{L} \|_F^{-1} \| \textbf{D}_{R} \|_F^{-1}$, $a_0 :=(p-1)+(q-1) $ and $a_1 := pq - 1$. Plugging in the standardized matrix $\textbf{S}_L^{-1/2} \textbf{X} (\textbf{S}_R^{-1/2})^\top = \tau \textbf{W}_L \textbf{Z} \textbf{W}_R^\top$ we obtain, for $N \in \{0,1\}$,
\begin{align}\label{eq:rotation svd}
\textbf{B}^N_L = \tau^4 \textbf{W}_L \left( a_N \textbf{I}_p + \sum_{k=1}^p \bar{\beta}_{k\bull} \textbf{E}^{kk} \right) \textbf{W}_L^\top \quad \text{and} \quad \textbf{B}^N_R = \tau^4 \textbf{W}_R \left( a_N \textbf{I}_q + \sum_{l=1}^q \bar{\beta}_{\bull l} \textbf{E}^{ll} \right) \textbf{W}_R^\top,
\end{align}
which are precisely the eigendecompositions of the matrices $\textbf{B}_L^N$ and $\textbf{B}_R^N$, giving us a way of finding the missing double rotation by $\textbf{W}_L^\top$ and $\textbf{W}_R^\top$. To identify the needed eigenbases, the matrix counterpart for Assumption \ref{assu:vicm_asymp} is then as follows.

\begin{assumptionmat} \label{assu:micm_asymp}
Both the row averages  $\bar{\beta}_{1\bull}, \ldots ,\bar{\beta}_{p\bull}$ and the column averages
$\bar{\beta}_{\bull 1}, \ldots ,\bar{\beta}_{\bull q}$ of the kurtosis values of $z_{kl}$ are distinct.
\end{assumptionmat}

%Note that if the row and column means of the kurtoses are distinct the unique ordering in Assumption \ref{assu:micm_asymp} can always be achieved by a double permutation.
Interestingly, the number of constraints on the distinctness of the kurtoses of the components does not grow linearly with the number of components but is rather proportional to its square root (assuming the number of rows and the number of columns grow linearly). In a sense MFOBI thus allows for more freedom for the individual marginal distributions. Note that for $q = 1$ Assumption \ref{assu:micm_asymp} reduces to Assumption \ref{assu:vicm_asymp}.

\subsubsection{The method in total}

The similarity between FOBI and MFOBI is now particularly easy to see if we first write the formula for FOBI as
\[\textbf{z} = \textbf{W}^\top \textbf{S}^{-1/2} \textbf{x}, \]
where $\textbf{W}$ has the eigenvectors of $\textbf{B}$ as its columns and the equality sign means equality up to sign-change and permutation. Compare then the above to the same expression for MFOBI:
\[ \textbf{Z} \propto \left( \textbf{W}_L^\top \textbf{S}_L^{-1/2} \right) \textbf{X} \left(\textbf{W}_R^T \textbf{S}_R^{-1/2} \right){}^\top, \]
where $\textbf{W}_L$ and $\textbf{W}_R$ have respectively the eigenvectors of $\textbf{B}_L$ and $\textbf{B}_R$ as their columns and the proportionality is up to permutation and  sign-change from both left and right. Seen this way, MFOBI can simply be regarded as FOBI applied from both sides simultaneously. Recovering the matrix $\textbf{Z}$ only up to proportionality is not a problem as we can always estimate the constant of proportionality using the assumption that $\cov \left\{ \vec (\textbf{Z}) \right\} = \textbf{I}_{pq}$.

\section{Extension to tensor-valued data} \label{section:tensor}

In this section we further extend FOBI to tensor-valued data, producing a method we refer to as TFOBI.
To handle summations over multiple indices we use Einstein's summation convention \citep{mccullagh1987}; that is, whenever an index appears twice, summation over that index is implied.
For example, for a $4$-dimensional tensor $\mbf A = \{ a \lo {ijkl} \}$, the symbol $a \lo {abjk} a \lo {cdjk}$ stands for
\begin{align*}
\tsum \lo j \tsum \lo k  a \lo {abjk} a \lo {cdjk}.
\end{align*}
That is, $\{a \lo {abjk} a \lo {cdjk}\}$ is a 4-dimensional tensor, the $(a,b,c,d)$th entry of which is given above.

\subsection{Tensor independent component model}

Let $\mbf X$ be a random element in $\real \hi {p \lo 1 \times \ldots \times p \lo r}$, that is, a random tensor of order $r$. Following \cite{lathauwer2000}, for a given tensor $\mbf A \in \real \hi {p \lo 1 \times \ldots \times p \lo r}$, we call any $p \lo m$-vector obtained by letting $i \lo m$ vary over $\{1, \ldots, p \lo m \}$ while fixing all the other indices an $m$-mode vector. The term $m$th mode (or ``$m$-mode'') refers to the $m$th direction of a tensor of order $r$, $m=1, \ldots ,r$. In some sense the opposite operation, fixing a value of one of the indices, $i_m = 1, \ldots ,p_m$, while varying the others produces what we call the $m$-mode faces of a tensor. For any given $m=1, \ldots ,r$, a tensor $\mbf A \in \real \hi {p \lo 1 \times \ldots \times p \lo r}$ thus has in total $p_m$ $m$-mode faces of size $p_1 \times \ldots \times p_{m-1} \times p_{m+1} \times \ldots \times p_r$. Notice that the set of $i_m$th elements of all $m$-mode vectors of a tensor $\textbf{A}$ contains the same elements as the $i_m$th $m$-mode face of $\textbf{A}$, $i_m = 1, \ldots ,p_m$.

To work with tensors we next introduce a product operation between a tensor and a matrix that provides a higher order generalization of a linear transformation of a vector by matrix. Following again \cite{lathauwer2000}, for  $\mbf A \in \real \hi { p \lo 1 \times \ldots \times p \lo r}$ and $\mbf B \in \real \hi {q \lo m \times p \lo m}$, let  $\mbf A \odot \lo m \mbf B$  be the
$p \lo 1 \times \ldots \times p \lo {m-1} \times q \lo m \times p \lo {m+1} \times \ldots \times p \lo r$ dimensional tensor whose $(i \lo 1, \ldots, j \lo m, \ldots, i \lo r)$th entry is
\begin{align}\label{eq:tensor matrix}
(\mbf A \odot \lo {m} \mbf B ) \lo {i \lo 1 \ldots j \lo m \ldots i \lo r} =   a \lo {i \lo 1 \ldots i \lo m \ldots i \lo r} b \lo {j \lo m i \lo m}.
\end{align}
Let $\mbf B \lo 1 \in \real \hi {q \lo 1 \times p \lo 1}, \ldots, \mbf B \lo r \in \real \hi {q \lo r \times p \lo r}$. We use the notation  $\mbf A \odot \lo 1 \mbf B \lo 1 \ldots \odot \lo r \mbf B \lo r$ to abbreviate the tensor
\begin{align*}
(\ldots ( \mbf A \odot \lo 1 \mbf B \lo 1) \odot \lo 2 \mbf B \lo 2 \ldots) \odot \lo r \mbf B \lo r) = \{ a \lo {j \lo 1 \ldots j \lo r} b \lo {i \lo 1 j \lo 1} \ldots b \lo {i \lo r j \lo r} \}.
\end{align*}
It is easy to see that for a vector $\textbf{a} \in \mathbb{R}^{p_1}$ we have $(\textbf{a} \odot_1 \textbf{B}_1) = \textbf{B}_1 \textbf{a}$ and for a matrix $\textbf{A} \in \mathbb{R}^{p_1 \times p_2}$ similarly $(\textbf{A} \odot_1 \textbf{B}_1) = \textbf{B}_1 \textbf{A}$ and $(\textbf{A} \odot_2 \textbf{B}_2) = \textbf{A} \textbf{B}_2^\top$, assuming $\textbf{B}_1$ and $\textbf{B}_2$ are of appropriate size. Thus $\odot_m$ can be seen as a linear transformation from the direction of the $m$th mode. Using $m$-mode vectors the multiplication has also a second interpretation; $(\textbf{A} \odot_m \textbf{B}_m)$ applies the linear transformation given by $\textbf{B}_m$ individually to each $m$-mode vector of $\textbf{A}$.

The previous multiplication operation is also commutative in the sense that for distinct values of $m$ the order we apply the multiplications $\odot_m$ has no effect on the outcome. If we want to multiply multiple times from the direction of the same mode commutativity fails and we instead have the following lemma.% Regarding the product $\odot \lo m$ we further have the following identity, which will be used frequently in our discussion.

\begin{lemma}\label{lemma:tensor matrix product} For any  $\mbf A \in \real \hi { p \lo 1 \times \ldots \times p \lo r}$, $\mbf B \lo 1 \in \real \hi {p \lo 1 \times p \lo 1}, \ldots, \mbf B \lo r \in \real \hi {p \lo r \times p \lo r}$, $\mbf C \lo 1 \in \real \hi {p \lo 1 \times p \lo 1}, \ldots, \mbf C \lo r \in \real \hi {p \lo r \times p \lo r}$, we have
\begin{align}\label{eq:tensor matrix product}
\mbf A \odot \lo 1 (\mbf B \lo 1 \mbf C \lo 1 ) \ldots \odot \lo r (\mbf B \lo r \mbf C \lo r) =
\mbf A \odot \lo 1 \mbf C \lo 1 \ldots \odot \lo r \mbf C \lo r\odot \lo 1 \mbf B \lo 1 \ldots \odot \lo r \mbf B \lo r.
\end{align}
\end{lemma}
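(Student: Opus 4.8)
The plan is to verify the identity directly at the level of individual entries, using the entry-wise definition of the single-mode product in \eqref{eq:tensor matrix} together with its multi-mode extension stated just before the lemma. Write the entries of $\mbf{B}_m$ and $\mbf{C}_m$ as $b^{(m)}_{i_m j_m}$ and $c^{(m)}_{i_m j_m}$, so that the entries of the matrix product obey $(\mbf{B}_m \mbf{C}_m)_{i_m j_m} = b^{(m)}_{i_m k_m} c^{(m)}_{k_m j_m}$ with Einstein summation over the intermediate index $k_m$. Substituting this into the multi-mode formula for the left-hand side, its $(i_1,\ldots,i_r)$th entry becomes $a_{j_1 \ldots j_r}\, b^{(1)}_{i_1 k_1} c^{(1)}_{k_1 j_1} \cdots b^{(r)}_{i_r k_r} c^{(r)}_{k_r j_r}$, summed over all $j_m$ and $k_m$.

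For the right-hand side I would first form the intermediate tensor $\mbf{D} := \mbf{A} \odot_1 \mbf{C}_1 \ldots \odot_r \mbf{C}_r$, whose entries are $d_{k_1 \ldots k_r} = a_{j_1 \ldots j_r}\, c^{(1)}_{k_1 j_1} \cdots c^{(r)}_{k_r j_r}$, and then apply the matrices $\mbf{B}_m$ to obtain $(\mbf{D} \odot_1 \mbf{B}_1 \ldots \odot_r \mbf{B}_r)_{i_1 \ldots i_r} = d_{k_1 \ldots k_r}\, b^{(1)}_{i_1 k_1} \cdots b^{(r)}_{i_r k_r}$. Expanding $d_{k_1 \ldots k_r}$ produces exactly the same scalar factors $a_{j_1 \ldots j_r}$, $b^{(m)}_{i_m k_m}$ and $c^{(m)}_{k_m j_m}$, summed over the same indices $j_m, k_m$; the two expressions differ only in the order in which the factors are written. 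Since multiplication of scalars is commutative, the two sides agree entrywise, which establishes \eqref{eq:tensor matrix product}.

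The argument is essentially a bookkeeping exercise with no analytic content, so I expect the only real care to lie in the index management. One must recognize the right-hand side as the composition of two multi-mode products (all the $\mbf{C}_m$ applied first, then all the $\mbf{B}_m$), introduce the intermediate indices $k_m$ consistently on both sides, and check that applying $\mbf{B}_m\mbf{C}_m$ in a single mode pairs the row index of $\mbf{B}_m$ with the output index $i_m$ and the column index of $\mbf{C}_m$ with the summed index $j_m$ of $\mbf{A}$. Once the index conventions are fixed the claim reduces to scalar commutativity. Alternatively, one could isolate the single-mode statement $\mbf{A} \odot_m (\mbf{B}_m \mbf{C}_m) = (\mbf{A} \odot_m \mbf{C}_m) \odot_m \mbf{B}_m$ and then invoke the commutativity of $\odot_m$ across distinct modes noted before the lemma, but I find the direct entrywise computation cleaner and less error-prone.
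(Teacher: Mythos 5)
Your proof is correct and follows essentially the same route as the paper's: expand both sides entrywise under the Einstein summation convention, observe that both contain exactly the same scalar factors $a_{j_1\ldots j_r}$, $b^{(m)}_{i_m k_m}$, $c^{(m)}_{k_m j_m}$ summed over the same indices, and conclude by commutativity of scalar multiplication. If anything, your index bookkeeping is slightly more careful than the paper's, whose displayed entry for the right-hand side momentarily interchanges the roles of the $\mbf B_m$ and $\mbf C_m$ (a harmless notational slip that your explicit intermediate tensor $\mbf D$ avoids).
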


\begin{proof}
By definition, the right hand side is the tensor in $\real \hi {p \lo 1 \times \ldots \times p \lo r}$ whose ($i \lo 1 \ldots i \lo r$)th entry is
\begin{align*}
a \lo {k \lo 1 \ldots k \lo r} b \lo {j \lo 1 k \lo 1} \ldots b \lo {j \lo r k \lo r} c \lo {i \lo 1 j \lo 1} \ldots c \lo {i \lo r j \lo r} = a \lo {k \lo 1 \ldots k \lo r} (c \lo {i \lo 1 j \lo 1} b \lo {j \lo 1 k \lo 1})   \ldots (c \lo {i \lo r j \lo r} b \lo {j \lo r k \lo r}).
\end{align*}
The right-hand side is precisely  the ($i \lo 1 \ldots i \lo r$)th entry of the tensor on the left-hand side of \eqref{eq:tensor matrix product}.
\end{proof}

We now have sufficient tools to define the independent component model for tensors.

\begin{definition}\label{def:ticm}
The tensor-valued independent component model assumes that the  observed i.i.d. tensors $\mbf X \lo i \in \mathbb{R}^{p \lo 1 \times \ldots \times p \lo r}$, $i=1, \ldots ,n$, are realisations of a random tensor $\textbf{X}$ satisfying
\begin{align}\label{eq:tic}
 \mbf X = \mbf \mu + \mbf Z \odot \lo 1 \mbf \Omega \lo 1 \ldots \odot \lo r \mbf \Omega \lo r.
\end{align}
where $\boldsymbol{\mu} \in \mathbb{R}^{ p \lo 1 \times \ldots \times p \lo r}$, $\mbf \Omega \lo 1 \in \ca A \hi {p \lo 1}, \ldots, \mbf \Omega \lo r \in \ca A \hi {p \lo r}$,  and $\mbf Z \in \mathbb{R}^{p \lo 1 \times \ldots \times p \lo r}$ satisfies Assumptions \ref{assu:ticm_const} and \ref{assu:ticm_gauss} below.
\end{definition}

\begin{assumptionten} \label{assu:ticm_const}
The components $z_{k \lo 1 \ldots k \lo r}$ of $\textbf{Z}$ are mutually independent and standardized in the sense that $E[z_{k \lo 1 \ldots k \lo r}] = 0$ and $\var [z_{k \lo 1 \ldots k \lo r}] = 1$.
\end{assumptionten}

\begin{assumptionten}\label{assu:ticm_gauss} For each $m = 1, \ldots , r$, at most one $m$-mode face of $\mbf Z$ consists entirely of Gaussian components.
\end{assumptionten}

The above assumptions serve the same purposes as the corresponding assumptions of the vector and matrix independent component models in Section \ref{sec:icm}. The need for Assumption \ref{assu:ticm_gauss} can be seen by considering the product operation $\odot_m \mbf \Omega \lo m$ as a linear transformation of the $m$-mode vectors by $\mbf \Omega \lo m$ and observing that if two or more $m$-mode faces had only Gaussian components the corresponding columns of $\mbf \Omega \lo m$ would be rotationally invariant.

\subsection{The $m$-mode moment matrices of a random tensor}

The matrix unmixing procedure described in Section \ref{sec:fold} involves left and right standardization and then left and right rotation. We need to generalize these to $m$-mode standardization and $m$-mode rotation. 
We first define the $m$-mode product between two tensors: for $\mbf A, \mbf B \in \real \hi {p \lo 1 \times \ldots \times p \lo r}$, the $m$-mode product, $\mbf A \odot \lo {-m} \mbf B$,  is the $p \lo m \times p \lo m$ matrix, the $(s,t)$th entry of which is
\begin{align*}
(\mbf A \odot \lo {-m} \mbf B ) \lo {st} = a \lo {i \lo 1 \ldots i \lo {m-1} \, s \,  i \lo {m+1} \ldots i \lo r} b \lo {i \lo 1 \ldots i \lo {m-1} \, t \, i \lo {m+1} \ldots i \lo r}.
\end{align*}
In some sense the operation $\odot \lo {-m}$ is opposite to the operation $\odot \lo m$ in \eqref{eq:tensor matrix}; whereas $\odot \lo m$ involves the sum over the $m$th index of $\mbf A$, $\odot \lo {-m}$ involves the sum over all indices except the $m$th index of $\mbf A$ and $\mbf B$.

We now extend moment matrices such as
\begin{align*}
\cov \left( \mbf X \right), \quad \cov \left( \mbf X^\top \right), \quad \E \left( \mbf X \mbf X^\top \mbf X \mbf X^\top \right) \quad \mbox{and} \quad \E \left( \mbf X^\top \mbf X \mbf X^\top \mbf X \right)
\end{align*}
to the tensor case.
Again, for convenience and without loss of generality, assume that $E [ \mbf X] = \mbf 0$. As in the matrix case, there are two generalizations of the FOBI functional.

\begin{definition}\label{definition:m-mode moments} The $m$-mode covariance and two types of $m$-mode FOBI functionals of a random tensor $\mbf X \in \real \hi {p \lo 1 \times \ldots \times p \lo r}$ are   the following $p \lo m \times p \lo m$ matrices
\begin{align*}
\cov \loo m ( \mbf X ) =\ali   \left( \tprod \lo {s \ne m} p \lo s \right) \inv \E \left(   \mbf X \odot \lo {-m}   \mbf X  \right),  \\
\mbf B \loo m \hi 0 ( \mbf X) = \ali  \left( \tprod \lo {s \ne m} p \lo s \right) \inv  \E \left\{ ( \mbf X \odot \lo {-m} \mbf X ) \hi 2 \right\} \quad \mbox{and}  \\
\mbf B \loo m \hi 1 ( \mbf X) = \ali  \left( \tprod \lo {s \ne m} p \lo s \right) \inv  \E \left\{ \| \mbf X \|^2 \lo {F} ( \mbf X \odot \lo {-m} \mbf X ) \right\},
\end{align*}
where $\| \cdot \|^2 \lo F$ is the squared Frobenius norm of a tensor (the sum of squared elements).
\end{definition}
%The Frobenius tensor norm is defined in the similar way to Frobenius matrix norm: see \cite{lathauwer2000}.
Define further
\begin{align}\label{eq:rho m}
\rho \lo m := \tprod \lo {s \ne m} p \lo s.
\end{align}
This proportionality constant reflects the fact that $\odot \lo {-m}$ involves the sum of $\rho \lo m$ terms. %The next lemma gives a rule for the mixed products involving both $\odot \lo s$ and $\odot \lo {-m}$.

\subsection{The $m$-mode standardization}

Similar to random matrix unmixing our idea of unmixing a random tensor also consists of two steps: standardization and rotation, except now the two operations have to be performed on each of the $m$ modes of the $r$-tensor.
%\begin{align*}
%\mbf Z \to \mbf X \hi {st} \to \mbf X \hi {un} \to \mbf X.
%\end{align*}
For each $m = 1, \ldots, r$, let
\begin{align}
\mbf \Omega \lo m = \mbf U \lo m \mbf D \lo m \mbf V \lo m ^\top
\label{eq:svd}
\end{align}
be the singular value decomposition of $\mbf \Omega \lo m$.
The next theorem shows that we can recover $\mbf \Omega \lo m \mbf \Omega \lo m ^\top$ (up to a proportionality constant) from the $m$-mode covariance matrix of $\mbf X$.

\begin{lemma}\label{lemma:m mode tensor covariance} Under the tensor IC model in Definition \ref{def:ticm} we have
\begin{align}
\cov \loo m ( \mbf X ) =  \rho \lo m \inv \left( \textstyle{\prod} \lo {s \ne m} \| \mbf D \lo s \| \hi 2 \lo F \right) \mbf U \lo m \mbf D \lo m \hi 2 \mbf U \lo m ^\top.
\label{eq:above without}
\end{align}
%where $\| \cdot \| \lo F$ is the Frobenius norm.
\end{lemma}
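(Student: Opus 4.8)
The plan is to reduce everything to the singular value decompositions \eqref{eq:svd} and then track how the $m$-mode product $\odot_{-m}$ interacts with orthogonal versus diagonal mode transformations. First I would set $\mbf \mu = \mbf 0$ without loss of generality and apply Lemma \ref{lemma:tensor matrix product} twice to the representation \eqref{eq:tic}. Writing $\mbf \Omega_s = (\mbf U_s \mbf D_s)\mbf V_s^\top$ and invoking the Lemma with $\mbf B_s = \mbf U_s \mbf D_s$, $\mbf C_s = \mbf V_s^\top$ moves the $\mbf V_s^\top$ factors to the inside; a second application with $\mbf B_s = \mbf U_s$, $\mbf C_s = \mbf D_s$ then separates $\mbf D_s$ from $\mbf U_s$. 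The model becomes $\mbf X = \mbf X^{un} \odot_1 \mbf U_1 \ldots \odot_r \mbf U_r$, where $\mbf X^{un} := \mbf Z^* \odot_1 \mbf D_1 \ldots \odot_r \mbf D_r$ and $\mbf Z^* := \mbf Z \odot_1 \mbf V_1^\top \ldots \odot_r \mbf V_r^\top$. Since each $\mbf V_s$ is orthogonal, applying an orthogonal map along every mode preserves the identity covariance, so $\cov\{\vec(\mbf Z^*)\} = \mbf I$; equivalently $\E(z^*_{\mbf a} z^*_{\mbf b}) = \delta_{\mbf a \mbf b}$ for multi-indices $\mbf a, \mbf b$.

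The technical core is an invariance identity for $\odot_{-m}$ under orthogonal mode maps: if $\mbf Y = \mbf A \odot_1 \mbf U_1 \ldots \odot_r \mbf U_r$ with all $\mbf U_s$ orthogonal, then $\mbf Y \odot_{-m} \mbf Y = \mbf U_m (\mbf A \odot_{-m} \mbf A) \mbf U_m^\top$. I would prove this by writing the $(s,t)$ entry in Einstein notation, substituting $y_{i_1 \ldots i_r} = a_{j_1 \ldots j_r}(U_1)_{i_1 j_1} \ldots (U_r)_{i_r j_r}$ into both factors, and noting that summing over each off-mode index $i_k$ ($k \ne m$) yields $\sum_{i_k}(U_k)_{i_k j_k}(U_k)_{i_k j'_k} = (\mbf U_k^\top \mbf U_k)_{j_k j'_k} = \delta_{j_k j'_k}$, which collapses the two families of paired summation indices off mode $m$. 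The surviving mode-$m$ factors $(\mbf U_m)_{s j_m}$ and $(\mbf U_m)_{t j'_m}$ assemble exactly the conjugation $\mbf U_m(\mbf A \odot_{-m}\mbf A)\mbf U_m^\top$. Applying this with $\mbf A = \mbf X^{un}$ gives $\mbf X \odot_{-m} \mbf X = \mbf U_m (\mbf X^{un} \odot_{-m} \mbf X^{un}) \mbf U_m^\top$.

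It then remains to evaluate $\E(\mbf X^{un} \odot_{-m} \mbf X^{un})$. Because the $\mbf D_s$ are diagonal, $x^{un}_{i_1 \ldots i_r} = z^*_{i_1 \ldots i_r}\prod_s d_s^{(i_s)}$, so the $(s,t)$ entry equals $d_m^{(s)} d_m^{(t)} \sum_{i_k,\, k \ne m} \E(z^*_{\ldots s \ldots} z^*_{\ldots t \ldots}) \prod_{k \ne m}(d_k^{(i_k)})^2$. The two multi-indices agree off mode $m$, so $\cov\{\vec(\mbf Z^*)\}=\mbf I$ forces $\E(z^*_{\ldots s \ldots} z^*_{\ldots t \ldots}) = \delta_{st}$, independent of the fixed off-mode values; this $\delta_{st}$ pulls out of the sum, and the remaining factors separate into $\prod_{k \ne m}\sum_{i_k}(d_k^{(i_k)})^2 = \prod_{k \ne m}\|\mbf D_k\|_F^2$. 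Hence $\E(\mbf X^{un} \odot_{-m} \mbf X^{un}) = \left(\prod_{s \ne m}\|\mbf D_s\|_F^2\right)\mbf D_m^2$. Conjugating by $\mbf U_m$, multiplying by $\rho_m^{-1}$ as in Definition \ref{definition:m-mode moments}, and recalling \eqref{eq:rho m} then produces \eqref{eq:above without}.

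I expect the main obstacle to be purely bookkeeping: managing the two parallel families of summation indices in the invariance identity and ensuring the orthogonality collapse is applied only to the off-mode indices while the mode-$m$ factors are kept free. No idea is needed beyond orthogonality of the $\mbf U_s$, diagonality of the $\mbf D_s$, and the identity covariance of $\mbf Z^*$.
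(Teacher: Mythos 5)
Your proof is correct, but it is organized differently from the paper's. The paper proves the lemma in one direct Einstein-notation computation: it substitutes $x_{i_1\ldots i_r} = \omega^{(r)}_{i_r j_r}\ldots\omega^{(1)}_{i_1 j_1} z_{j_1\ldots j_r}$ with the \emph{original} mixing matrices, uses $\E(z_{j_1\ldots j_r}z_{k_1\ldots k_r}) = \delta_{j_1 k_1}\ldots\delta_{j_r k_r}$ to collapse the sums, obtains
\[
\E(\mbf X \odot_{-m} \mbf X) = \Bigl(\textstyle\prod_{s\ne m}\|\mbf \Omega_s\|_F^2\Bigr)\,\mbf \Omega_m \mbf \Omega_m^\top,
\]
and only at the very end invokes the SVD to rewrite this as the stated eigendecomposition. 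You instead apply the SVD at the outset, use Lemma \ref{lemma:tensor matrix product} twice to factor $\mbf X = \mbf X^{un}\odot_1 \mbf U_1 \ldots \odot_r \mbf U_r$ with $\mbf X^{un} = \mbf Z^*\odot_1\mbf D_1\ldots\odot_r\mbf D_r$, and then split the work into an orthogonal-invariance identity plus a diagonal moment computation. That invariance identity is precisely the paper's Lemma \ref{lemma:auuu}, which the paper states and proves separately later for the rotation step (Theorem \ref{theorem:tensor rotation svd}); you are in effect proving it early and reusing it, which is a legitimate economy since the same delta-collapse argument then serves both standardization and rotation. One point you handle correctly and that deserves emphasis: $\mbf Z^*$ loses independence of components under the orthogonal rotations $\mbf V_s^\top$, and your argument only invokes its identity covariance (uncorrelatedness), which is all that is needed here. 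The trade-off: the paper's route is shorter and yields the cleaner intermediate fact that the $m$-mode covariance recovers $\mbf \Omega_m\mbf \Omega_m^\top$ up to a constant for arbitrary mixing; your route mirrors the MFOBI pipeline ($\mbf Z \to \mbf X^{st} \to \mbf X^{un} \to \mbf X$) and makes the structural parallel between the matrix and tensor cases explicit, at the cost of an extra decomposition step and the auxiliary covariance claim for $\mbf Z^*$.
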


\begin{proof}
Without loss of generality, assume $m = 1$. By definition, the $(a,b)$th entry of the matrix
$ \E ( \mbf X \odot \lo {-1} \mbf X ) $ is
\begin{align}\label{eq:ab entry}
\E ( x \lo {a i \lo 2 \ldots i \lo r} x \lo {b  i \lo 2 \ldots i \lo r} ).
\end{align}
By the tensor IC model \eqref{eq:tic} we have
\begin{align*}
x \lo {i \lo 1 \ldots i \lo r} = \omega \lo {i \lo r j \lo r} \hii r \ldots \omega \lo {i \lo 1 j \lo 1} \hii 1 \, z \lo {j \lo 1 \ldots j \lo r}
\end{align*}
where $\omega \lo {ij} \hii m$ are the entries of $\mbf \Omega \lo m$. Hence \eqref{eq:ab entry} can be rewritten as
\begin{align*}
\ali  \E ( \omega \lo {i \lo r j \lo r} \hii r \ldots \omega \lo {a j \lo 1} \hii 1 \, z \lo {j \lo 1 \ldots j \lo r}
 \omega \lo {i \lo r k \lo r} \hii r \ldots \omega \lo {b k \lo 1} \hii 1 \, z \lo {k \lo 1 \ldots k \lo r} ) \\
\ali \hspace{.3in}=  \omega \lo {i \lo r j \lo r} \hii r \ldots \omega \lo {a j \lo 1} \hii 1 \,
\omega \lo {i \lo r k \lo r} \hii r \ldots \omega \lo {b k \lo 1} \hii 1 \, \delta \lo {j \lo 1 k \lo 1} \ldots \delta \lo {j \lo r k \lo r}.
\end{align*}
By the properties of the Kronecker delta we can express the above as
\begin{align*}
\ali   \omega \lo {i \lo r j \lo r} \hii r \ldots \omega \lo {a j \lo 1} \hii 1 \,
\omega \lo {i \lo r j \lo r} \hii r \ldots \omega \lo {b j \lo 1} \hii 1 =  \left( \omega \lo {i \lo 2 j \lo 2} \hii 2 \omega \lo {i \lo 2 j \lo 2} \hii 2 \right)
\ldots \left(  \omega \lo {i \lo r j \lo r} \hii r \omega \lo {i \lo r j \lo r} \hii r \right)
\left( \omega \lo {a j \lo 1} \hii 1 \omega \lo {b j \lo 1} \hii 1 \right)
\end{align*}
which is the $(a,b)$th entry of the matrix $( \textstyle{\prod} \lo {s \ne 1} \| \mbf \Omega \lo s \| \hi 2 \lo F  ) \mbf \Omega \lo 1 \mbf \Omega \lo 1 ^\top$. Now the assertion of the theorem follows from the singular value decomposition \eqref{eq:svd}.
\end{proof}

%Again, for simplicity and without loss of generality, we assume $\mbf \mu = \mbf 0 \lo {p \lo 1 \times \cdots \times p \lo r}$. For each $m = 1, \ldots, r$, let $\mbf \Omega \lo m$ have singular-value decomposition $\mbf U \lo m \mbf D \lo m \mbf V \lo m \trans$.
%Resorting again to the singular value decompositions of the full-rank mixing matrices $\boldsymbol{\Omega}$ and $\boldsymbol{\Upsilon}$, the model in Definition \ref{def:micm} gets the form
%\begin{align} \label{eq:ticm_svdmodel}
%\begin{split}
%\textbf{X} = \ali  \mbf Z \odot \lo 1 \mbf \Omega \lo 1 \odot \lo 2 \cdots \odot \lo r \mbf \Omega \lo r \\
%= \ali \mbf Z \odot \lo 1 \mbf U \lo 1 \mbf D \lo 1 \mbf V \lo 1 \trans  \odot \lo 2 \cdots \odot \lo r \mbf U \lo r \mbf D \lo r \mbf V \lo r \trans.
%\end{split}
%\end{align}
%Again the diagonal elements of $\mbf \Omega \lo m$  can be chosen to be positive.
%
%We then apply a similar analysis for the double mixing process of $\textbf{Z}$ as was done with FOBI previously.
%

Let $\mbf S \lo m := \cov \loo m (\mbf X)$. Relation \eqref{eq:above without} means that
\begin{align*}
\rho \lo m \inv \left( \textstyle{\prod} \lo {s \ne m} \| \mbf D \lo s \| \hi 2 \lo F \right) \mbf U \lo m \mbf D \lo m \hi 2 \mbf U \lo m ^\top
\end{align*}
is in fact the eigendecomposition of $\mbf S \lo m$.
%
%
%We begin by finding a tensor counterpart for the standardization that provides the first step in FOBI. From the nature of the problem it is clear that the standardization has to be performed on $\textbf{X}$ for each $m$-mode. Consider then the $m$-mode covariance matrix of $\textbf{X}$, which has the form
%\begin{align} \label{eq:ticm_covxeigen}
%\begin{split}
%\textbf{S} \lo m  := \ali  \cov \loo m (\textbf{X}) = \left( \textstyle{\prod} \lo {s \ne m} p \lo r \inv \right)E(\mbf X \odot \lo {-m} \mbf X) \\
%= \ali \left( \textstyle{\prod} \lo {s \ne m} p \lo s \inv \right) \mbf U \lo m E(\mbf X \hi {un} \odot \lo {-m} \mbf X \hi {un}) \mbf U \lo m \trans,
%\end{split}
%\end{align}
%where, by Lemma \ref{lemma:m mode tensor covariance} applied to $\mbf X \hi {un}$ we have
%\begin{align*}
%E\left(\mbf X \hi {un} \odot \lo {-m} \mbf X \hi {un} \right) = \left[ \textstyle{\prod} \lo {s \ne m}  \tra ( \mbf D \lo s \hi 2) \right] \mbf D \lo m \hi 2.
%\end{align*}
Thus, all inverse square roots of $\mbf S \lo m$ are of the form
\begin{align*}
\left( \textstyle{\prod} \lo {s \ne m} p \lo s \hi {1/2} \| \mbf D \lo s  \| \lo F \inv \right)  \mbf M \lo m \mbf D \lo m \inv \mbf U \lo m ^\top,  \end{align*}
where $\mbf M \lo m \in \mathcal{U}^{p \lo m} $. We can use these square roots to recover a rotated version of $\mbf Z$, as indicated by the next theorem.

%Using then the inverse square roots of $\textbf{S}_m$  to $m$-mode standardize the data, we obtain the transformation
%\begin{align*}
%\textbf{X} \mapsto \ali \mbf X \odot \lo 1 \mbf S \lo 1 \hi {-1/2} \odot \lo 2 \cdots \odot \lo r \mbf S \lo r \hi {-1/2} \\
%= \ali   \left(\textstyle{\prod} \lo {m=1} \hi r \textstyle{\prod} \lo {s \ne m} p \lo s \hi {1/2} \| \mbf D \lo s  \| \lo F \inv \right)
%\left( \mbf Z \odot \lo 1 \mbf M \lo 1 \mbf V \lo 1\trans \odot \lo 2\cdots \odot \lo r \mbf M \lo r \mbf V \lo r \trans \right).% \\
%%\textbf{V}_L \textbf{V}_{\boldsymbol{\Omega}}^T \textbf{Z} \textbf{V}_{\boldsymbol{\Upsilon}} \textbf{V}_R^T.
%\end{align*}
%Denoting $\textbf{W}_m  :=  \mbf M \lo m \mbf V \lo m \trans \in \ca U \hi { p \lo m}$, $m = 1, \ldots, r$,  we have the following theorem.

\begin{theorem}\label{theo:ticm_stand}
Let $\mbf S \lo m$ be as defined in the last paragraph.  Then, under the tensor independent component model of Definition \ref{def:ticm},
\begin{align}\label{eq:rotation away}
\mbf X \odot \lo 1 \mbf S \lo 1 \hi {-1/2} \ldots \odot \lo r \mbf S \lo r \hi {-1/2} = \tau
\mbf Z  \odot \lo 1 \mbf W \lo 1 \ldots \odot \lo r \mbf W  \lo r,
\end{align}
where
\begin{align}\label{eq:wm and tau}
\mbf W \lo m := \mbf M \lo m \mbf V \lo m ^\top \in \ca U \hi {p \lo m}, \quad \tau =  \left(\textstyle{\prod} \lo {m=1} \hi r \textstyle{\prod} \lo {s \ne m} p \lo s \hi {1/2} \| \mbf D \lo s  \| \lo F \inv \right),
\end{align}
and $m = 1, \ldots ,r$.
\end{theorem}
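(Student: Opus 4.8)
The plan is to reduce the identity \eqref{eq:rotation away} to a mode-by-mode matrix computation by substituting the model into the left-hand side and then regrouping the repeated mode-$m$ multiplications with Lemma \ref{lemma:tensor matrix product}. First I would take $\mbf \mu = \mbf 0$ without loss of generality and substitute the tensor IC model \eqref{eq:tic} together with the singular value decompositions \eqref{eq:svd}, so that the left-hand side of \eqref{eq:rotation away} becomes
\[
\mbf Z \odot_1 \mbf \Omega_1 \ldots \odot_r \mbf \Omega_r \odot_1 \mbf S_1^{-1/2} \ldots \odot_r \mbf S_r^{-1/2}.
\]
Here each inverse square root $\mbf S_m^{-1/2}$ has the form established immediately after Lemma \ref{lemma:m mode tensor covariance}, namely $\left( \prod_{s \ne m} p_s^{1/2} \| \mbf D_s \|_F^{-1} \right) \mbf M_m \mbf D_m^{-1} \mbf U_m^\top$ with $\mbf M_m \in \mathcal{U}^{p_m}$, so the structural content about the eigendecomposition of $\mbf S_m$ is already in hand and need only be invoked.

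Next I would apply Lemma \ref{lemma:tensor matrix product}, read from right to left, with $\mbf C_m = \mbf \Omega_m$ and $\mbf B_m = \mbf S_m^{-1/2}$, to collapse the two mode-$m$ multiplications into a single one:
\[
\mbf Z \odot_1 \mbf \Omega_1 \ldots \odot_r \mbf \Omega_r \odot_1 \mbf S_1^{-1/2} \ldots \odot_r \mbf S_r^{-1/2} = \mbf Z \odot_1 (\mbf S_1^{-1/2} \mbf \Omega_1) \ldots \odot_r (\mbf S_r^{-1/2} \mbf \Omega_r).
\]
The core computation is then the per-mode matrix product $\mbf S_m^{-1/2} \mbf \Omega_m$. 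Using the orthogonality relations $\mbf U_m^\top \mbf U_m = \mbf I_{p_m}$ and the cancellation $\mbf D_m^{-1} \mbf D_m = \mbf I_{p_m}$, this collapses to $\left( \prod_{s \ne m} p_s^{1/2} \| \mbf D_s \|_F^{-1} \right) \mbf M_m \mbf V_m^\top$, which by the definition of $\mbf W_m$ in \eqref{eq:wm and tau} equals $\left( \prod_{s \ne m} p_s^{1/2} \| \mbf D_s \|_F^{-1} \right) \mbf W_m$, with $\mbf W_m \in \mathcal{U}^{p_m}$.

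Finally I would pull the scalar prefactor out of each $\odot_m$ by linearity of the tensor-matrix product and collect the resulting factors into a single constant, which is exactly $\tau = \prod_{m=1}^r \prod_{s \ne m} p_s^{1/2} \| \mbf D_s \|_F^{-1}$ as in \eqref{eq:wm and tau}. This yields $\tau\, \mbf Z \odot_1 \mbf W_1 \ldots \odot_r \mbf W_r$, completing the argument. I do not expect a genuine obstacle here, since the substance is supplied by Lemma \ref{lemma:m mode tensor covariance} (the eigenstructure of $\mbf S_m$) and Lemma \ref{lemma:tensor matrix product} (reassociating same-mode products). The only point requiring care is the multilinear bookkeeping: one must respect the order of multiplication in the same mode, with the model factor $\mbf \Omega_m$ acting before the standardizer $\mbf S_m^{-1/2}$, and verify that scalar constants factor cleanly through $\odot_m$. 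Once Lemma \ref{lemma:tensor matrix product} lines up the two same-mode factors in the correct order, the whole statement reduces to the elementary identity $\mbf S_m^{-1/2} \mbf \Omega_m \propto \mbf W_m$.
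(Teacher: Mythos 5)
Your proposal is correct and follows essentially the same route as the paper's own proof: substitute the model, use Lemma \ref{lemma:tensor matrix product} to merge the same-mode products into $\mbf Z \odot_1 (\mbf S_1^{-1/2} \mbf \Omega_1) \ldots \odot_r (\mbf S_r^{-1/2} \mbf \Omega_r)$, and then reduce each factor via the singular value decomposition to $\left( \prod_{s \ne m} p_s^{1/2} \| \mbf D_s \|_F^{-1} \right) \mbf W_m$ before collecting the scalars into $\tau$. No gaps to report.
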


\begin{proof}
By Lemma \ref{lemma:tensor matrix product},
\begin{align}
\mbf X \odot \lo 1 \mbf S \lo 1 \hi {-1/2} \ldots \odot \lo r \mbf S   \lo r \hi {-1/2} = \ali
\mbf Z \odot \lo 1 \mbf \Omega \lo 1 \ldots \odot \lo r \mbf \Omega \lo r  \odot \lo 1 \mbf S \lo 1 \hi {-1/2}  \ldots \odot \lo r \mbf S   \lo r \hi {-1/2} \nonumber \\
= \ali
\mbf Z \odot \lo 1 \mbf S \lo 1 \hi {-1/2} \mbf \Omega \lo 1 \ldots \odot \lo r \mbf S \lo r \hi {-1/2}  \mbf \Omega \lo r. \label{eq:above however}
\end{align}
However, we note that
\begin{align*}
\mbf S \lo m \hi {-1/2} \mbf \Omega \lo m = \ali \left( \textstyle{\prod} \lo {s \ne m} p \lo s \hi {1/2} \| \mbf D \lo s  \| \lo F \inv \right)  \mbf M \lo m \mbf D \lo m \inv \mbf U \lo m ^\top \mbf U \lo m \mbf D \lo m \mbf V \lo m ^\top \\
= \ali   \left( \textstyle{\prod} \lo {s \ne m} p \lo s \hi {1/2} \| \mbf D \lo s  \| \lo F \inv \right)  \mbf W \lo m,
\end{align*}
where $\mbf W \lo m := \mbf M \lo m \mbf V \lo m ^\top \in \ca U \hi {p \lo m}$. Substitute the above into \eqref{eq:above however} to prove the desired equality.
\end{proof}

The tensor on the right-hand side of \eqref{eq:rotation away} is only a rotation away from the independent component tensor $\mbf Z$, a step we carry out in the next subsection.

\subsection{The $m$-mode rotation}

Let
\begin{align} \label{eq:bm}
\begin{split}
\textbf{B}_ m \hi 0  &:= \rho \lo m \inv \textbf{B} \loo m ^0(\mbf X \odot \lo 1 \mbf S \lo 1 \hi {-1/2} \ldots \odot \lo r \mbf S \lo r \hi {-1/2} ) \quad \mbox{and} \\
\textbf{B}_ m \hi 1  &:= \rho \lo m \inv \textbf{B} \loo m ^1(\mbf X \odot \lo 1 \mbf S \lo 1 \hi {-1/2} \ldots \odot \lo r \mbf S \lo r \hi {-1/2} )
\end{split}
\end{align}
where $\mbf B \loo m \hi 0$ and $\mbf B \loo m \hi 1$ are the FOBI functionals in Definition \ref{definition:m-mode moments}. In order to manipulate them we need the following lemma.

\begin{lemma}\label{lemma:auuu} Let $\mbf A, \mbf B \in \real \hi {p \lo 1 \times \ldots \times p \lo r}$, $\mbf U \lo 1 \in \ca U \hi {p \lo 1},  \ldots , \mbf U \lo r \in \ca U \hi {p \lo r}$. Then
\begin{align}\label{eq:above without 2}
(\mbf A \odot \lo 1 \mbf U \lo 1 \ldots \odot \lo r \mbf U \lo r) \odot \lo {-m} (\mbf B \odot \lo 1 \mbf U \lo 1 \ldots \odot \lo r \mbf U \lo r) = \mbf U \lo m ( \mbf A \odot \lo {-m} \mbf B) \mbf U \lo m ^\top.
\end{align}
\end{lemma}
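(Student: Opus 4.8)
The plan is to verify \eqref{eq:above without 2} entrywise using Einstein summation together with the orthogonality of the $\mbf U_s$. Exactly as in the proof of Lemma~\ref{lemma:m mode tensor covariance}, the commutativity of $\odot_m$ across distinct modes means it suffices to treat the case $m = 1$; the general statement then follows by relabelling the modes.

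First I would abbreviate $\mbf A' := \mbf A \odot_1 \mbf U_1 \ldots \odot_r \mbf U_r$ and $\mbf B' := \mbf B \odot_1 \mbf U_1 \ldots \odot_r \mbf U_r$, whose entries, by the abbreviation following \eqref{eq:tensor matrix}, are $a'\lo{i_1 \ldots i_r} = a\lo{j_1 \ldots j_r} u\lo{i_1 j_1}\hii 1 \ldots u\lo{i_r j_r}\hii r$ and $b'\lo{i_1 \ldots i_r} = b\lo{k_1 \ldots k_r} u\lo{i_1 k_1}\hii 1 \ldots u\lo{i_r k_r}\hii r$, where $u\lo{ij}\hii m$ denote the entries of $\mbf U_m$. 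By the definition of $\odot_{-1}$, the $(s,t)$th entry of the left-hand side of \eqref{eq:above without 2} is the sum over $i_2, \ldots, i_r$ of $a'\lo{s i_2 \ldots i_r} b'\lo{t i_2 \ldots i_r}$; after substitution the free indices $s,t$ ride along on the first mode inside $u\lo{s j_1}\hii 1$ and $u\lo{t k_1}\hii 1$, while each summed index $i_s$ ($s = 2, \ldots, r$) appears only in the pair $u\lo{i_s j_s}\hii s u\lo{i_s k_s}\hii s$.

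The key step is to contract these pairs. Since each $\mbf U_s$ is orthogonal, summing over $i_s$ yields $u\lo{i_s j_s}\hii s u\lo{i_s k_s}\hii s = \delta\lo{j_s k_s}$, precisely the Kronecker-delta manoeuvre already used in Lemma~\ref{lemma:m mode tensor covariance}. Applying this for $s = 2, \ldots, r$ collapses the expression to $a\lo{j_1 j_2 \ldots j_r} b\lo{k_1 j_2 \ldots j_r} u\lo{s j_1}\hii 1 u\lo{t k_1}\hii 1$. Here the sum over $j_2, \ldots, j_r$ of $a\lo{j_1 j_2 \ldots j_r} b\lo{k_1 j_2 \ldots j_r}$ is exactly the $(j_1, k_1)$th entry of $\mbf A \odot_{-1} \mbf B$, and the remaining factors $u\lo{s j_1}\hii 1 u\lo{t k_1}\hii 1$ assemble this surviving matrix into $\mbf U_1 (\mbf A \odot_{-1} \mbf B) \mbf U_1^\top$, whose $(s,t)$th entry is the right-hand side, finishing the argument.

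I expect the only genuine obstacle to be the index bookkeeping: one must ensure the orthogonality relation is invoked for exactly the modes $s \neq 1$, that the two free indices $s,t$ are carried through on the first mode at every stage, and that the surviving pattern $a\lo{\cdot\, j_2 \ldots j_r} b\lo{\cdot\, j_2 \ldots j_r}$ matches the definition of $\odot_{-1}$ while the outer $\mbf U_1$ factors land on the correct sides to produce the conjugation $\mbf U_1 (\cdot) \mbf U_1^\top$. No idea beyond this careful contraction of indices is required.
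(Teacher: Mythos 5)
Your proposal is correct and follows essentially the same argument as the paper: fix $m=1$ without loss of generality, expand the $(s,t)$th entry of the left-hand side in Einstein notation, contract the paired factors $u\lo{i_s j_s}\hii s u\lo{i_s k_s}\hii s$ into Kronecker deltas via orthogonality, and recognize the surviving sum as the $(j_1,k_1)$th entry of $\mbf A \odot_{-1} \mbf B$ conjugated by $\mbf U_1$. The only cosmetic caveat is that you use $s$ both as a matrix row index and as the mode index in the contraction step; the paper avoids this by writing the entry as $(a,b)$.
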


\begin{proof}
Without loss of generality assume that $m=1$. The $(a,b)$th entry of the matrix on the left-hand side of \eqref{eq:above without 2} is
\begin{align*}
\ali (\mbf A \odot \lo 1 \mbf U \lo 1 \ldots \odot \lo r \mbf U \lo r) \lo {a \, i \lo 2 \ldots i \lo r} (\mbf B \odot \lo 1 \mbf U \lo 1 \ldots \odot \lo r \mbf U \lo r) \lo {b \, i \lo 2 \ldots i \lo r} \\
\ali \hspace{.5in} = a \lo {j \lo 1 \ldots j \lo r} u \lo {a \, j \lo 1} \hii 1 u \lo {i \lo 2 \, j \lo 2} \hii 2 \ldots u \lo {i \lo r j \lo r} \hii r
b \lo {k \lo 1 \ldots k \lo r} u \lo {b \, k \lo 1} \hii 1 u \lo {i \lo 2 \, k \lo 2} \hii 2 \ldots u \lo {i \lo r k \lo r} \hii r \\
\ali \hspace{.5in} = a \lo {j \lo 1 \ldots j \lo r}b \lo {k \lo 1 \ldots k \lo r} (u \lo {i \lo 2 \, j \lo 2} \hii 2 u \lo {i \lo 2 \, k \lo 2} \hii 2) \ldots ( u \lo {i \lo r j \lo r} \hii r u \lo {i \lo r k \lo r} \hii r )
 u \lo {a \, j \lo 1} \hii 1  u \lo {b \, k \lo 1} \hii 1 \\
 \ali \hspace{.5in} = a \lo {j \lo 1 \ldots j \lo r}b \lo {k \lo 1 \ldots k \lo r}  \delta \lo {j \lo 2 k \lo 2}   \ldots \delta \lo {j \lo r k \lo r}
 u \lo {a \, j \lo 1} \hii 1  u \lo {b \, k \lo 1} \hii 1 .
\end{align*}
The above reduces to
\begin{align*}
 a \lo {j \lo 1 j \lo 2 \ldots j \lo r}b \lo {k \lo 1 j \lo 2 \ldots j \lo r}
 u \lo {a \, j \lo 1} \hii 1  u \lo {b \, k \lo 1} \hii 1,
\end{align*}
which is the $(a,b)$th entry of the matrix on the right-hand side of \eqref{eq:above without 2}.
\end{proof}

%\begin{align*}
%(\bar {\mbf A} \lo {-m})_s := \rho \lo m\inv a \lo {i \lo 1 \cdots i \lo {m-1} \, s \,  i \lo {m+1} \cdots i \lo r} 1 \lo {i \lo 1 \cdots i \lo {m-1} \,  i \lo {m+1} \cdots i \lo r}
%\end{align*}
%\begin{align*}
%\rho \lo m \inv a \lo {i \lo 1 \cdots i\lo {m-1} \, a \, i \lo {m+1} \cdots i \lo r} \, 1 \lo {i \lo 1 \cdots i \lo {m-1} i \lo {m+1} \cdots i \lo r}
%\end{align*}
%where $\mbf 1 = \{1 \lo {i \lo 1 \cdots i \lo {m-1} \,  i \lo {m+1} \cdots i \lo r}  \}$ is the tensor in $\real \hi {p \lo 1 \times \cdots \times p \lo {m-1} \times p \lo {m+1} \times \cdots  \times   p \lo r}$ whose entries are 1. That is, $\bar {\mbf A} \lo {-m}$ is the vector obtained by averaging over all except the $m$th indices of the tensor $\mbf A$.

Define the $m$-flattening, or $m$-unfolding, of a tensor $\textbf{A} \in \mathbb{R}^{p_1 \times \ldots \times p_r}$ to be the matrix $\textbf{A}_{(m)} \in \mathbb{R}^{p_m \times \rho_m}$ obtained by taking all the $m$-mode vectors of $\textbf{A}$ and stacking them horizontally into a matrix. As for the order of stacking we choose to use the cyclical unfolding described in \cite{lathauwer2000}. Then, for $\textbf{A}^* := \textbf{A} \odot_1 \textbf{B}_1 \ldots \odot_r \textbf{B}_r$, we have
\begin{align}\label{eq:tensor_flatten_kron}
\textbf{A}^*_{(m)} = \textbf{B}_m \textbf{A}_{(m)} \left( \textbf{B}_{m+1} \otimes \ldots \otimes \textbf{B}_r \otimes \textbf{B}_1 \otimes \ldots \otimes \textbf{B}_{m-1} \right).
\end{align}

Flattening can also be used to express the $m$-mode product of a tensor with itself with means of ordinary matrix multiplication. Namely,
\begin{align}\label{eq:tensor_flatten_matrix}
\mbf A \odot \lo {-m} \mbf A = \mbf A \loo m \mbf A {} \loo m ^\top.
\end{align}

For a tensor $\mbf A \in \real \hi {p \lo 1 \times \ldots \times p \lo r}$ let $\bar {\mbf A} \lo {-m}$ be the $p_m$-vector whose $i_m$th element is the mean of the $\rho_m$ elements of the $i_m$th $m$-mode face of $\textbf{A}$, $i_m = 1, \ldots ,p_m$. Expressed via the previously defined $m$-flattening $\bar {\mbf A} \lo {-m}$ thus contains the row means of $\textbf{A}_{(m)}$.

The next theorem shows that the rotations $\mbf W \lo m$ can be recovered from the eigendecompositions of $\mbf B \lo m \hi 0$ and $\mbf B \lo m \hi 1$.

\begin{theorem}\label{theorem:tensor rotation svd}
Let $\rho \lo m$,  $\mbf W \lo m$, $\tau$,   $\mbf B \lo m \hi 0$ and $\mbf B \lo m \hi 1$  be as defined in \eqref{eq:rho m}, \eqref{eq:wm and tau}, and \eqref{eq:bm}. Let $\mbf \beta \in \real \hi {p \lo 1 \times \ldots \times p \lo r}$ be the tensor with the entries $\E ( z \lo {i \lo 1 \ldots i \lo r} \hi 4 )$. Then
\begin{align*}
\mbf B \lo m \hi 0 = \ali \tau^4 \textbf{W}_m  \left\{ (p \lo m - 1 + \rho \lo m - 1) \mbf I \lo {p \lo m}  + \mathrm{diag} ( \bar { \mbf \beta } \lo {-m} )  \right\} \textbf{W}_m ^\top,  \\
\textbf{B} \lo m \hi 1 =\ali \tau^4 \textbf{W}_m  \left\{ (p \lo m \rho \lo m - 1)  \textbf{I}_{p \lo m}   + \mathrm{diag} ( \bar { \mbf \beta } \lo {-m} )  \right\} \textbf{W}_m ^\top,
\end{align*}
where $\mathrm{diag} ( \bar { \mbf \beta } \lo {-m} )$ is the diagonal matrix having the elements of $\bar { \mbf \beta } \lo {-m} \in \mathbb{R}^{p_m}$ on its diagonal.
\end{theorem}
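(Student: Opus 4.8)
The plan is to reduce the computation of $\mbf B_m^0$ and $\mbf B_m^1$ to the corresponding quantities for the pure independent-component tensor $\mbf Z$, and then to evaluate those by a direct fourth-moment calculation. By Theorem \ref{theo:ticm_stand} the standardized tensor appearing in \eqref{eq:bm} equals $\tau\,\mbf Z \odot_1 \mbf W_1 \ldots \odot_r \mbf W_r$ with each $\mbf W_m \in \mathcal{U}^{p_m}$ and $\tau$ as in \eqref{eq:wm and tau}. First I would apply Lemma \ref{lemma:auuu} to the $m$-mode product, obtaining
\[
(\mbf X \odot_1 \mbf S_1^{-1/2}\ldots) \odot_{-m} (\mbf X \odot_1 \mbf S_1^{-1/2}\ldots) = \tau^2\, \mbf W_m (\mbf Z \odot_{-m} \mbf Z)\, \mbf W_m^\top .
\]
Squaring this $p_m \times p_m$ matrix and using $\mbf W_m^\top \mbf W_m = \mbf I_{p_m}$ collapses the inner factor, so the conjugation by $\mbf W_m$ factors out of $\mbf B_m^0$. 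For $\mbf B_m^1$ the same reduction applies once I observe that the scalar weight is orthogonally invariant, $\|\mbf Z \odot_1 \mbf W_1 \ldots \odot_r \mbf W_r\|_F^2 = \|\mbf Z\|_F^2$, since multiplying along each mode by an orthogonal matrix preserves the sum of squared entries. After accounting for the normalization in \eqref{eq:bm}, both functionals then reduce to $\mbf B_m^N = \tau^4\, \mbf W_m\, \mbf B_{(m)}^N(\mbf Z)\, \mbf W_m^\top$, and it remains to identify $\mbf B_{(m)}^N(\mbf Z)$.

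Next I would compute $\mbf B_{(m)}^N(\mbf Z)$ explicitly. By the flattening identity \eqref{eq:tensor_flatten_matrix}, $\mbf Z \odot_{-m} \mbf Z = \mbf Z_{(m)} \mbf Z_{(m)}^\top$, where $\mbf Z_{(m)} \in \real^{p_m \times \rho_m}$ has mutually independent, standardized entries (Assumption \ref{assu:ticm_const}). Writing $z_{aJ}$ for the entry with mode-$m$ index $a$ and remaining multi-index $J$, the $(a,b)$ entry of $\E\{(\mbf Z \odot_{-m}\mbf Z)^2\}$ becomes a sum over a mode-$m$ index $c$ and two face multi-indices $J,K$ of $\E(z_{aJ} z_{cJ} z_{cK} z_{bK})$, while the $(a,b)$ entry of $\E\{\|\mbf Z\|_F^2 (\mbf Z \odot_{-m}\mbf Z)\}$ is a sum of terms $\E(z_{dL}^2 z_{aJ} z_{bJ})$. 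Each expectation factorizes over the independent components, so only a handful of index-coincidence patterns survive.

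The main obstacle is the bookkeeping of these coincidence patterns. I would split according to whether the two face indices agree ($J=K$) or differ, and then according to the coincidences among $a,b,c$ (and $d,L$). Two points need care. Off-diagonal entries ($a \ne b$) must be shown to vanish, which holds because every admissible pattern there forces a lone first-order factor $\E(z)=0$; and the patterns in which exactly three indices coincide produce a factor $\E(z^3)=\gamma$ multiplied by such a lone $\E(z)=0$, so they too drop out, leaving only genuine second- and fourth-order moments. Summing the survivors, the diagonal separates into a combinatorial count of unit-variance pairings, contributing $(p_m-1)+(\rho_m-1)$ for $\mbf B^0$ and $p_m\rho_m-1$ for $\mbf B^1$, plus the single all-indices-equal term that contributes $\sum_J \beta_{aJ}$; after the $\rho_m^{-1}$ normalization this is exactly the average over the $a$th $m$-mode face, i.e.\ the $a$th entry of $\bar{\mbf\beta}_{-m}$. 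This yields $\mbf B_{(m)}^0(\mbf Z) = \{(p_m-1)+(\rho_m-1)\}\mbf I_{p_m} + \mathrm{diag}(\bar{\mbf\beta}_{-m})$ and $\mbf B_{(m)}^1(\mbf Z) = (p_m\rho_m-1)\mbf I_{p_m} + \mathrm{diag}(\bar{\mbf\beta}_{-m})$, and substituting into the reduction of the first paragraph gives the two claimed eigendecompositions.

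Finally, I note a shortcut that avoids the combinatorics: the computation of $\mbf B_{(m)}^N(\mbf Z)$ is precisely matrix FOBI applied to $\mbf Z_{(m)}$ regarded as a $p_m \times \rho_m$ matrix of independent components, so one may instead invoke the MFOBI identity \eqref{eq:rotation svd} with the substitutions $p \mapsto p_m$ and $q \mapsto \rho_m$, under which the row-averaged kurtoses $\bar{\beta}_{k\bull}$ become the face averages $\bar{\mbf\beta}_{-m}$ and the constants $a_0, a_1$ become $(p_m-1)+(\rho_m-1)$ and $p_m\rho_m-1$. Either route produces the same two eigendecompositions, with eigenvectors $\mbf W_m$ as required for recovering the mixing.
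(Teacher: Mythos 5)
Your proposal is correct and follows essentially the same route as the paper's proof: standardization via Theorem \ref{theo:ticm_stand}, factoring out the orthogonal matrices with Lemma \ref{lemma:auuu} (together with the orthogonal invariance of the Frobenius norm for $N=1$), flattening via \eqref{eq:tensor_flatten_matrix}, and then invoking the matrix identities \eqref{eq:rotation svd} with $p \mapsto p_m$, $q \mapsto \rho_m$ --- which is exactly the shortcut you describe in your final paragraph. Your explicit fourth-moment bookkeeping in the middle paragraphs is a correct, self-contained verification of \eqref{eq:rotation svd} in the case needed, something the paper simply asserts rather than proves, but it is not required once the reduction to the matrix case is made.
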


\begin{proof}
Again, without loss of generality, assume $m = 1$. By definition,
\begin{align*}
\mbf B \lo 1 \hi 0 = \rho \lo 1 \inv  \E \left[ \left\{ ( \mbf X \odot \lo 1 \mbf S \lo 1 \hi {-1/2} \ldots \odot \lo r \mbf S \lo r \hi {-1/2}) \odot \lo {-1}
( \mbf X \odot \lo 1 \mbf S \lo 1 \hi {-1/2} \ldots \odot \lo r \mbf S \lo r \hi {-1/2}) \right\} \hi 2 \right].
\end{align*}
By Theorem \ref{theo:ticm_stand}, the right-hand side is
\begin{align*}
\rho \lo 1 \inv \tau \hi 4 \E \left[ \left\{ ( \mbf Z \odot \lo 1 \mbf W \lo 1 \ldots \odot \lo r \mbf W \lo r )\odot \lo {-1}
( \mbf Z \odot \lo 1 \mbf W \lo 1 \ldots \odot \lo r \mbf W \lo r ) \right\} \hi 2 \right].
\end{align*}
By Lemma \ref{lemma:auuu}, this is
$
\rho \lo 1 \inv \tau \hi 4 \mbf W \lo 1  \E \left\{ ( \mbf Z  \odot \lo {-1}
  \mbf Z  )\hi 2 \right\} \mbf W \lo 1 ^\top.
$
and by \eqref{eq:tensor_flatten_matrix} the expectation can be expressed as
\begin{align*}
\E \left\{ ( \mbf Z  \odot \lo {-1}
  \mbf Z  )\hi 2 \right\} = \E \left( \mbf Z \loo 1 \mbf Z \loo 1 ^\top \mbf Z \loo 1 \mbf Z \loo 1 ^\top \right).
\end{align*}
Now applying the matrix identities in \eqref{eq:rotation svd} completes the proof for $\mbf B \lo m \hi 0$. The proof for $\mbf B \lo m \hi 1$ is carried out similarly by reducing the matter into the matrix case.
\end{proof}
%Now let $\ell$ be the composite index $i \lo 2 \cdots i \lo r$; that is, as $i \lo 2 \cdots i \lo r$ varies through the set $\{ 1, \ldots, p \lo r \} \times \cdots \times \{ 1, \ldots, p \lo 2\}$, $\ell$ varies through the set $\{1, \ldots, \rho \lo 1\}$. Let $\mbf Z \loo 1$ be the $p \lo 1 \times \rho \lo 1$ matrix $\{ z \lo {i\ell}: i = 1, \ldots, p \lo 1, \ell = 1, \ldots, \rho \lo 1\}$. Then

Theorem \ref{theorem:tensor rotation svd} says that $\mbf W \lo m$ has the eigenvectors of $\mbf B \lo m \hi 0$ and $\mbf B \lo m \hi 1$ as its columns.
In other words, we can recover the orthogonal matrices $\mbf W \lo m$ from the eigendecompositions of $\mbf B \lo m \hi 0$ or $\mbf B \lo m \hi 1$, $m=1, \ldots ,r$. Again, to identify the  eigenbases we need the following assumption.

\begin{assumptionten} \label{assu:ticm_asymp}
For each $m = 1, \ldots, r$, the components of $\bar {\mbf \beta} \lo {-m}$ are distinct.
\end{assumptionten}

The next corollary puts the $m$-mode standardizations and rotations together to recover the independent component from a random tensor $\mbf X$.

\begin{corollary} Let $\mbf S \lo m \hi {-1/2}$ be any square root of $\mbf S \lo m$ and let $\mbf W \lo m$ have the eigenvectors of either $\mbf B \lo m \hi 0$ or $\mbf B \lo m \hi 1$ as its columns, $m = 1, \ldots ,r$. Then, under Assumptions \ref{assu:ticm_const} and \ref{assu:ticm_asymp}, we have
\begin{align*}
\mbf X \odot \lo 1 (\mbf W \lo 1 ^\top  \mbf S \lo 1 \hi {-1/2} ) \ldots \odot \lo r ( \mbf W \lo r  ^\top \mbf S \lo r \hi {-1/2} ) = \tau \mbf Z.
\end{align*}
\end{corollary}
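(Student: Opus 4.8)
The plan is to obtain the corollary as a direct composition of the two structural results already in hand: the $m$-mode standardization of Theorem \ref{theo:ticm_stand} and the $m$-mode rotation of Theorem \ref{theorem:tensor rotation svd}, delegating all the tensor--matrix bookkeeping to the product rule of Lemma \ref{lemma:tensor matrix product}. No new moment computations are needed. Assumption \ref{assu:ticm_const} enters only indirectly, through Theorem \ref{theo:ticm_stand}, which presupposes the IC model and hence standardized, independent $\mbf Z$; Assumption \ref{assu:ticm_asymp} enters only through Theorem \ref{theorem:tensor rotation svd}, where distinctness of the entries of $\bar{\mbf\beta}_{-m}$ makes the eigenvalues of $\mbf B_m^0$ (or $\mbf B_m^1$) simple and so lets $\mbf W_m$ be recovered as an eigenbasis.

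First I would split the composite maps. Each mode carries the product $\mbf W_m^\top \mbf S_m^{-1/2}$, so Lemma \ref{lemma:tensor matrix product}, taken with $\mbf B_m = \mbf W_m^\top$ and $\mbf C_m = \mbf S_m^{-1/2}$, shows that the left-hand side of the corollary equals $\mbf X \odot_1 \mbf S_1^{-1/2} \ldots \odot_r \mbf S_r^{-1/2} \odot_1 \mbf W_1^\top \ldots \odot_r \mbf W_r^\top$; that is, standardize in every mode first, then rotate in every mode. This inner object is precisely the one to which Theorem \ref{theo:ticm_stand} applies.

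Next I would substitute and cancel. By Theorem \ref{theo:ticm_stand} the inner standardized tensor equals $\tau\, \mbf Z \odot_1 \mbf W_1 \ldots \odot_r \mbf W_r$, so the whole expression becomes $\tau\, \mbf Z \odot_1 \mbf W_1 \ldots \odot_r \mbf W_r \odot_1 \mbf W_1^\top \ldots \odot_r \mbf W_r^\top$. Applying Lemma \ref{lemma:tensor matrix product} in the reverse direction, now with $\mbf C_m = \mbf W_m$ and $\mbf B_m = \mbf W_m^\top$, collapses the two rotation layers in mode $m$ into a single multiplication by $\mbf W_m^\top \mbf W_m$. Since each $\mbf W_m \in \ca U^{p_m}$ is orthogonal, $\mbf W_m^\top \mbf W_m = \mbf I_{p_m}$, and multiplying a tensor by the identity in any mode leaves it unchanged, immediately from the defining formula \eqref{eq:tensor matrix}; the expression therefore reduces to $\tau\, \mbf Z$.

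Because the algebra is short, the only real subtlety, and the point I would flag explicitly, is the sense in which the equality holds. The $\mbf W_m$ of Theorem \ref{theo:ticm_stand} are the true mode-wise rotations, whereas those fed into the corollary are read off from the eigendecompositions of $\mbf B_m^0$ or $\mbf B_m^1$. Under Assumption \ref{assu:ticm_asymp} the relevant eigenvalues are distinct, so each eigenbasis is pinned down only up to signs and up to the ordering of the eigenvalues; the two families of $\mbf W_m$ thus agree only up to a per-mode factor in $\ca C^{p_m}$. Hence $= \tau\, \mbf Z$ must be read modulo mode-wise sign-change and permutation, exactly the residual ambiguity already present in the matrix case, and the constant $\tau$ can afterwards be fixed from $\cov\{\vec(\mbf Z)\} = \mbf I$ as in the MFOBI discussion.
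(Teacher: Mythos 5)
Your proof is correct and takes essentially the same route as the paper: the paper's proof simply multiplies both sides of \eqref{eq:rotation away} by $\odot \lo 1 \mbf W \lo 1 ^\top \ldots \odot \lo r \mbf W \lo r ^\top$ and invokes Lemma \ref{lemma:tensor matrix product}, which is exactly your decomposition--substitution--collapse argument read in the opposite direction. Your closing caveat that the eigenvector-based $\mbf W \lo m$ agree with the rotations of Theorem \ref{theo:ticm_stand} only up to mode-wise sign changes and permutations (so the displayed equality holds modulo that ambiguity) is a fair point the paper leaves implicit, though note the residual factor lies in $\mathcal{P}^{p \lo m}\mathcal{J}^{p \lo m}$ rather than all of $\mathcal{C}^{p \lo m}$, since normalized eigenvectors carry no scale ambiguity.
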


\begin{proof}
Multiply both sides of the equation \eqref{eq:rotation away} from the right by
\begin{align*}
\odot \lo 1 \mbf W \lo 1 ^\top \ldots \odot \lo r \mbf W \lo r ^\top
\end{align*}
and evoke tensor-matrix product rule in Lemma \ref{lemma:tensor matrix product} to prove the result.
\end{proof}

\section{Limiting distributions}\label{sec:asymp}

In this section we pursue the asymptotic distributions of the unmixing estimates given by the extended ICA procedures in the previous sections. We will focus primarily on MFOBI because the corresponding results for TFOBI follow directly from the results for MFOBI, as detailed in Remark \ref{rem:tfobi}. However, we first discuss the important concept of equivariance.

\subsection{Equivariance and independent component functionals}

In the vector-valued case for example \cite{MiettinenNordhausenOjaTaskinen:2014} state  that an unmixing functional $\boldsymbol{\Gamma}$ must satisfy the following two  conditions.
(i) For a distribution of $\bo z \in \mathbb{R}^p$ with standardized and mutually independent components, $\boldsymbol{\Gamma}(\bo z)=\bo I_p$  and, (ii) for the distribution of any $\bo x \in \mathbb{R}^p$, it holds that $\boldsymbol{\Gamma}(\bo A \bo x) = \boldsymbol{\Gamma}(\bo x)\bo A^{-1}$, for all $\bo A \in \mathcal{A}^p$ (in both conditions the equalities are understood up to permutation and sign changes of the rows). The second condition means that the functional is equivariant under affine transformations and $\boldsymbol{\Gamma}( \bo x)\bo x$ is thus independent of the used coordinate system. Theoretical derivations can then be limited to the case $\bs \Omega = \bo I_p$.

Consider next the unmixing matrix functionals in the tensor case and write $\boldsymbol{\Gamma}_{(m)}(\bo X) := \mbf W \lo m ^\top  \mbf S \lo m \hi {-1/2}$ for the $m$-mode unmixing matrix functional, $m=1, \ldots ,r$.
The functional $\boldsymbol{\Gamma}_{(m)}$ is said to be \textit{(fully)  affine equivariant} if, for all $\bo X\in \mathbb{R}^{p_1\times \ldots \times p_r} $ and  all $\textbf{A}_1 \in \mathcal{A}^{p_1}, \ldots , \textbf{A}_r \in \mathcal{A}^{p_r}$,
\[
\boldsymbol{\Gamma}_{(m)}(\bo X \odot_1 \textbf{A}_1 \ldots \odot_r \textbf{A}_r)=\boldsymbol{\Gamma}_{(m)}(\bo X) \textbf{A}_m^{-1}.
\]
This is however true for our unmixing matrix functionals only if $\textbf{A}_1, \ldots ,\textbf{A}_r$ are all orthogonal. The TFOBI unmixing matrix functionals $\boldsymbol{\Gamma}_{(m)}$ are thus \textit{orthogonally equivariant}. Also the weaker \textit{marginal affine equivariance} 
\[
\boldsymbol{\Gamma}_{(m)}(\bo X \odot_m \textbf{A}_m)=\boldsymbol{\Gamma}_{(m)}(\bo X)\textbf{A}_m^{-1},
\]
for some fixed $m = 1, \ldots ,r$, holds only if all $\textbf{A}_s$, $s \neq m$ are orthogonal. The reason why both of these conditions fail in the general case is that the $m$-mode covariance functionals are not fully affine equivariant in the sense that, for all $\bo X\in \mathbb{R}^{p_1\times \ldots \times p_r} $ and  all $\textbf{A}_1 \in \mathcal{A}^{p_1}, \ldots , \textbf{A}_r \in \mathcal{A}^{p_r}$,
\begin{align}\label{eq:cov_ae}
\cov_{(m)}(\bo X \odot_1 \textbf{A}_1 \ldots \odot_r \textbf{A}_r)=\textbf{A}_m  \cov_{(m)}(\bo X) \textbf{A}_m^\top, \quad \forall m = 1, \ldots ,r.
\end{align}
%The previous holds  $\textbf{A}_s$, $s\ne m$ are orthogonal. 
The condition \eqref{eq:cov_ae} also holds only if $\textbf{A}_1, \ldots ,\textbf{A}_r$ are all orthogonal, leading then into the orthogonal equivariance and marginal orthogonal equivariance of $\boldsymbol{\Gamma}_{(m)}$. In fact, \eqref{eq:cov_ae} in general seems such strict a requirement that we conjecture that no functional satisfying it exists. This would then imply also that no fully affine equivariant tensor unmixing matrix functionals based on separate standardization and rotation steps exist. Note however, that marginally affine equivariant $\boldsymbol{\Gamma}_{(m)}$ for a single direction can be obtained if $\cov_{(m)}$ and then $\bo B_{(m)}^0$ or $\bo B_{(m)}^1$ are applied separately for each direction.

The lack of full affine equivariance  means that the asymptotic results for the unmixing matrix estimates for general $\boldsymbol{\Omega}_L$ and $\boldsymbol{\Omega}_R$ no longer follow from the results in the simple case, $\boldsymbol{\Omega}_L = \textbf{I}_p$, $\boldsymbol{\Omega}_R = \textbf{I}_q$, and thus the comparison of different estimates becomes difficult. %However, we still consider the case of no mixing for MFOBI also to gain insight into its effectiveness under different settings and to compare the choices $N=0$ and $N=1$.
In the following we find the limiting distributions of the FOBI estimate $\hat{\boldsymbol{\Gamma}}$ and the MFOBI estimates $\hat{\boldsymbol{\Gamma}}_L$ and $\hat{\boldsymbol{\Gamma}}_R$ under the assumptions that $\boldsymbol{\Omega} = \textbf{I}_{p}$ (FOBI) and that $\boldsymbol{\Omega}_L = \textbf{I}_p$ and $\boldsymbol{\Omega}_R = \textbf{I}_q$ (MFOBI). The estimates are obtained by applying the functionals to empirical distributions of sample size $n$.

%We thus next state the asymptotic behaviors and variances of the elements of the unmixing matrices in case of no mixing for both FOBI and MFOBI.

%Denote now the sample versions of the MFOBI unmixing matrices $\textbf{W}_L^T \textbf{S}_L^{-1/2}$ and $\textbf{W}_R^T \textbf{S}_R^{-1/2}$ as $\hat{\boldsymbol{\Phi}}$ and $\hat{\boldsymbol{\Psi}}$, respectively, regardless of the choice of $N$ used. Also, write $\hat{\boldsymbol{\Phi}}$ for the sample version of the FOBI unmixing matrix $\textbf{W}^T \textbf{S}^{-1/2}$ too.

\subsection{Limiting distribution of the FOBI estimate}

The asymptotic behavior of the classic FOBI was first derived in \cite{IlmonenNevalainenOja2010} and requires Assumption \ref{assu:vicm_asymp} on the distinct kurtosis values of the components. The following results are however in the form of \cite{miettinen2014fourth}, see their Theorem 8 and Corollary 3.

\begin{theorem}\label{theo:vicm_asymp}
Let $\textbf{z}_1, \ldots ,\textbf{z}_n$ be a random sample from a $p$-variate distribution having finite eighth moments and satisfying assumptions \ref{assu:vicm_const} and \ref{assu:vicm_asymp}. Assume further that $\boldsymbol{\Omega} = \textbf{I}_p$ and that the standardization functional $\hat{\mbf S}{} \hi {-1/2}$ is chosen to be symmetric. Then there exists a sequence of FOBI estimates such that $\hat{\boldsymbol{\Gamma}}\rightarrow_P \textbf{I}_p$ and
\begin{align*}
\sqrt{n} (\hat{\gamma}_{kk} - 1) &= -\dfrac{1}{2} \sqrt{n} (\hat{s}_{kk} - 1) + o_P(1), \\
\sqrt{n} \hat{\gamma}_{kk'} &= \dfrac{\sqrt{n} \hat{Q} - (\beta_k + p + 1) \sqrt{n} \hat{s}_{kk'}}{\beta_k - \beta_{k'}}
 + o_P(1),
\end{align*}
where $\hat{Q} = \hat{q}_{kk'} + \hat{q}_{k'k} + \sum_{m \neq k,k'} \hat{q}_{mkk'}$ and $k \neq k'$.
\end{theorem}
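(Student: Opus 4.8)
The plan is to work throughout under $\boldsymbol{\Omega} = \textbf{I}_p$, so that $\textbf{x}_i = \textbf{z}_i$, and to linearize the two ingredients of FOBI --- the symmetric standardizer $\hat{\textbf{S}}^{-1/2}$ and the eigenvectors $\hat{\textbf{W}}$ of the standardized FOBI matrix --- around their population limits. First I would establish consistency: $\hat{\textbf{S}} \to_P \textbf{I}_p$ and the empirical FOBI matrix converges to $\textbf{B}(\textbf{z}) = \sum_{k} (\beta_k + p - 1)\textbf{E}^{kk}$, whose eigenvalues $\lambda_k := \beta_k + p - 1$ are distinct by Assumption \ref{assu:vicm_asymp}. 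Distinctness makes the eigenprojections continuous, so that after fixing the sign and ordering of each eigenvector one obtains a sequence with $\hat{\textbf{W}} \to_P \textbf{I}_p$ and hence $\hat{\boldsymbol{\Gamma}} = \hat{\textbf{W}}^\top \hat{\textbf{S}}^{-1/2} \to_P \textbf{I}_p$; this choice of signs and permutation is exactly the ``existence of a sequence'' in the statement.

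Writing $\textbf{S}^* := \sqrt{n}(\hat{\textbf{S}} - \textbf{I}_p)$, which is $O_P(1)$ by the central limit theorem (the eighth-moment hypothesis guarantees the joint asymptotic normality of all the moment averages involved), the symmetric root expands as $\hat{\textbf{S}}^{-1/2} = \textbf{I}_p - \tfrac{1}{2} n^{-1/2}\textbf{S}^* + o_P(n^{-1/2})$. The main work is the first-order expansion of $\hat{\textbf{B}} = \hat{\textbf{S}}^{-1/2}\,\hat{\textbf{C}}(\hat{\textbf{S}}^{-1})\,\hat{\textbf{S}}^{-1/2}$, where $\hat{\textbf{C}}(\textbf{M}) := n^{-1}\sum_i (\textbf{z}_i^\top \textbf{M}\textbf{z}_i)\textbf{z}_i\textbf{z}_i^\top$. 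Expanding the three factors and using linearity of $\hat{\textbf{C}}$, the off-diagonal perturbation $B^*_{kk'} := \sqrt{n}\hat{B}_{kk'}$ (for $k \neq k'$) collects three pieces: the raw FOBI fluctuation $\sqrt{n}\,(\hat{\textbf{C}}(\textbf{I}_p))_{kk'}$, which after splitting $z_{i,k}^3 z_{i,k'}$ and $z_{i,k}z_{i,k'}^3$ into their centered parts equals $\sqrt{n}\hat{Q}$; the sandwiching correction $-\tfrac12(\lambda_k + \lambda_{k'})S^*_{kk'}$ coming from the two $\hat{\textbf{S}}^{-1/2}$ factors acting on the diagonal limit $\textbf{B}(\textbf{z})$; and the weight correction $-(\textbf{C}_0(\textbf{S}^*))_{kk'}$, where $\textbf{C}_0(\textbf{M}) := \E[(\textbf{z}^\top \textbf{M}\textbf{z})\,\textbf{z}\textbf{z}^\top]$, which for $k \neq k'$ evaluates to $2 S^*_{kk'}$. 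Since $S^*_{kk'} = \sqrt{n}\hat{s}_{kk'}$, this gives $B^*_{kk'} = \sqrt{n}\hat{Q} - \{\tfrac12(\lambda_k + \lambda_{k'}) + 2\}\sqrt{n}\hat{s}_{kk'} + o_P(1)$.

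With the perturbation of $\hat{\textbf{B}}$ in hand I would apply the standard first-order eigenvector expansion, legitimate because the $\lambda_k$ are simple: writing $\hat{\textbf{W}} = \textbf{I}_p + n^{-1/2}\textbf{W}^* + o_P(n^{-1/2})$ with $\textbf{W}^*$ skew-symmetric, one has $(\textbf{W}^*)_{k'k} = B^*_{kk'}/(\lambda_k - \lambda_{k'}) = B^*_{kk'}/(\beta_k - \beta_{k'})$ for $k \neq k'$ and $(\textbf{W}^*)_{kk} = 0$. Combining the two expansions in $\hat{\boldsymbol{\Gamma}} = \hat{\textbf{W}}^\top\hat{\textbf{S}}^{-1/2}$ yields $\sqrt{n}(\hat{\boldsymbol{\Gamma}} - \textbf{I}_p) = (\textbf{W}^*)^\top - \tfrac12\textbf{S}^* + o_P(1)$. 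Reading off the diagonal gives $\sqrt{n}(\hat{\gamma}_{kk} - 1) = -\tfrac12 S^*_{kk} = -\tfrac12\sqrt{n}(\hat{s}_{kk} - 1)$, and reading off the $(k,k')$ entry gives $\sqrt{n}\hat{\gamma}_{kk'} = B^*_{kk'}/(\beta_k - \beta_{k'}) - \tfrac12\sqrt{n}\hat{s}_{kk'}$. Substituting $B^*_{kk'}$ and simplifying the $\hat{s}_{kk'}$ coefficient via $\lambda_k = \beta_k + p - 1$ collapses $\tfrac12(\lambda_k+\lambda_{k'}) + 2 + \tfrac12(\beta_k-\beta_{k'})$ to exactly $\beta_k + p + 1$, producing the stated formula.

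The step I expect to be the genuine obstacle is the first-order expansion of $\hat{\textbf{B}}$: it requires replacing the empirical weighting $\hat{\textbf{C}}(\textbf{S}^*)$ by its population counterpart $\textbf{C}_0(\textbf{S}^*)$ to the needed order (a Slutsky-type argument whose remainder is controlled only through the eighth-moment assumption, since $\textbf{S}^*$ itself involves fourth-order statistics), together with a careful accounting of the centering induced by estimating $\boldsymbol{\mu}$. The latter is essential rather than cosmetic: the uncentered average $n^{-1}\sum_i z_{i,k}^3 z_{i,k'}$ equals $\hat{q}_{kk'} + \gamma_k \bar{z}_{k'}$, and it is precisely the mean-subtraction in $\hat{\textbf{S}}$ and in the FOBI functional that cancels the $O_P(n^{-1/2})$ terms $\gamma_k\bar{z}_{k'} + \gamma_{k'}\bar{z}_k$, leaving the raw averages $\hat{q}_{kk'}$, $\hat{q}_{mkk'}$ and $\hat{s}_{kk'}$ of the definitions exactly as they appear. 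These are the bookkeeping details carried out in \cite{miettinen2014fourth}, from which the stated form is quoted.
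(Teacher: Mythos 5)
Your proposal is correct: the symmetric-root linearization, the Slutsky replacement of the empirical weight $\hat{\textbf{C}}(\textbf{S}^*)$ by $\textbf{C}_0(\textbf{S}^*) = 2\textbf{S}^*$ off the diagonal, the simple-eigenvalue perturbation step, the centering bookkeeping that turns the raw third-moment averages into $\hat{q}_{kk'}$, and the final algebra (with $\lambda_k = \beta_k + p - 1$, the coefficient $\tfrac{1}{2}(\lambda_k + \lambda_{k'}) + 2 + \tfrac{1}{2}(\beta_k - \beta_{k'})$ indeed collapses to $\beta_k + p + 1$) all check out. Note that the paper itself gives no proof of this theorem --- it quotes the result from \cite{miettinen2014fourth} (their Theorem 8 and Corollary 3) --- but your argument is essentially the same strategy the paper does spell out for the MFOBI analogue, Theorem \ref{theo:micm_asymp}, whose key identity \eqref{eq:proofs_asymp} is exactly your relation $\sqrt{n}\hat{\gamma}_{kk'} = B^*_{kk'}/(\beta_k - \beta_{k'}) - \tfrac{1}{2}\sqrt{n}\hat{s}_{kk'}$ transplanted to the matrix setting, so no further comparison is needed.
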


Based on Theorem \ref{theo:vicm_asymp} we can then compute the asymptotic variances of the elements of the estimated unmixing matrix $\hat{\boldsymbol{\Gamma}}$.

\begin{corollary}\label{cor:vicm_asymp}
Under the assumptions of Theorem \ref{theo:vicm_asymp} the limiting distribution of $\sqrt{n} \, \vec (\hat{\boldsymbol{\Gamma}} - \textbf{I}_p)$ is multivariate normal with mean vector $\textbf{0}_{p^2}$ and the following asymptotic variances.
\begin{alignat*}{3}
&ASV&(\hat{\gamma}_{kk}) &= \frac{\beta_k - 1}{4}, &\\
&ASV&(\hat{\gamma}_{kk'}) &= \frac{\omega_k + \omega_{k'} - \beta_k^2 - 6 \beta_{k'} + 9 + \sum_{m \neq k,k'} (\beta_m - 1)}{(\beta_k - \beta_{k'})^2}, &\quad k \neq k'.
\end{alignat*}
\end{corollary}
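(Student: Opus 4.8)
The plan is to treat Theorem \ref{theo:vicm_asymp} as the engine that already represents every entry of $\sqrt{n}(\hat{\boldsymbol{\Gamma}} - \textbf{I}_p)$ as a fixed linear combination of the cross-moment estimates $\hat{s}_{kk'}$, $\hat{q}_{kk'}$, $\hat{q}_{k'k}$ and $\hat{q}_{mkk'}$, plus an $o_P(1)$ remainder, and then to read off the limiting law and the variances. First I would note that each of these cross-moment estimates is a sample average $\tfrac{1}{n}\sum_{i} g(\textbf{z}_i)$ of an i.i.d.\ function of $\textbf{z}_i$ with finite variance, which is guaranteed by the finite eighth-moment hypothesis. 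Hence the multivariate central limit theorem gives joint asymptotic normality of the $\sqrt{n}$-scaled, centered vector of these averages, and applying the linear maps of Theorem \ref{theo:vicm_asymp} together with Slutsky's theorem yields asymptotic multivariate normality of $\sqrt{n}\,\vec(\hat{\boldsymbol{\Gamma}} - \textbf{I}_p)$. The limiting mean is $\textbf{0}_{p^2}$ because each leading term is the average of a quantity with expectation zero: for $k \neq k'$ one has $\E(z_k z_{k'}) = 0$, $\E\{(z_k^3 - \gamma_k)z_{k'}\} = 0$ and $\E(z_m^2 z_k z_{k'}) = 0$, while on the diagonal $\hat{s}_{kk}$ is centered at its mean $\E(z_k^2) = 1$. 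For the corollary only the marginal variances are needed, so the joint covariance structure may be left implicit.

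The diagonal entries are immediate. From $\sqrt{n}(\hat{\gamma}_{kk}-1) = -\tfrac{1}{2}\sqrt{n}(\hat{s}_{kk}-1) + o_P(1)$ I obtain $ASV(\hat{\gamma}_{kk}) = \tfrac{1}{4}\var(z_k^2)$, and since $z_k$ has mean zero and unit variance, $\var(z_k^2) = \E(z_k^4) - 1 = \beta_k - 1$, giving $(\beta_k - 1)/4$ as claimed.

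For the off-diagonal entries I would write $\sqrt{n}\,\hat{\gamma}_{kk'} = (\beta_k - \beta_{k'})^{-1}\sqrt{n}\,\bar{T}_n + o_P(1)$, where $\bar{T}_n = \tfrac{1}{n}\sum_i T_i$ is the sample mean of the per-observation random variable
\[
T = (z_k^3 - \gamma_k)z_{k'} + (z_{k'}^3 - \gamma_{k'})z_k + \sum_{m \neq k,k'} z_m^2 z_k z_{k'} - (\beta_k + p + 1)\,z_k z_{k'},
\]
which is exactly the integrand of $\hat{Q} - (\beta_k + p + 1)\hat{s}_{kk'}$. Since $\E(T) = 0$, it follows that $ASV(\hat{\gamma}_{kk'}) = \var(T)/(\beta_k-\beta_{k'})^2 = \E(T^2)/(\beta_k-\beta_{k'})^2$, and the task reduces to computing $\E(T^2)$. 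I would expand $T^2$ into the four squares and six pairwise cross-products of its constituent pieces and evaluate each expectation using mutual independence, zero means and unit variances together with the definitions $\gamma_m = \E(z_m^3)$, $\beta_m = \E(z_m^4)$ and $\omega_m = \var(z_m^3)$. The two $\hat{q}$-squares give $\omega_k + \omega_{k'}$; the square of the summation piece produces $\sum_{m\neq k,k'}\beta_m + (p-2)(p-3)$ via the diagonal and off-diagonal parts of the double sum; and the $\hat{s}_{kk'}$-piece contributes $(\beta_k + p + 1)^2$.

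The main obstacle is purely the algebraic bookkeeping in this last step. The $\beta_k$, $\beta_k^2$ and $\beta_k\beta_{k'}$ contributions arising from the three cross terms of the $\hat{q}$-pieces with $-(\beta_k + p + 1)z_k z_{k'}$ must be shown to cancel against those coming from $(\beta_k + p + 1)^2$ and from the $\hat{q}$--$\hat{q}$ cross term, and the $p$-dependent constants $(p-2)(p-3)$, $(p+1)(5-p)$ and $-2(\beta_k + p + 1)(p-2)$ must be checked to collapse to the single linear term $11 - p$. Collecting everything should give $\E(T^2) = \omega_k + \omega_{k'} - \beta_k^2 - 6\beta_{k'} + \sum_{m\neq k,k'}\beta_m + 9 - (p-2)$, which is exactly the stated numerator once the constant $-(p-2)$ is absorbed into $\sum_{m\neq k,k'}(\beta_m - 1)$. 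Dividing by $(\beta_k - \beta_{k'})^2$ then completes the proof.
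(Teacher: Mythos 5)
Your proposal is correct and follows exactly the route the paper intends: the paper states this corollary without its own proof (it is imported from \cite{miettinen2014fourth}, Theorem 8 and Corollary 3, "based on Theorem \ref{theo:vicm_asymp}"), and your derivation---joint CLT for the cross-moment averages, Slutsky's theorem, then the variance of the per-observation combination $T = \hat Q$-integrand $-(\beta_k+p+1)z_k z_{k'}$---is precisely that computation, mirroring the covariance-table argument the paper gives for the matrix analogue (Corollary \ref{cor:micm_asymp}). All your intermediate moment evaluations check out, and the constants indeed collapse as $(p-2)(p-3) + (p+1)^2 - 2(p+1)(p-2) = 11-p = 9-(p-2)$, which matches the stated numerator once $-(p-2)$ is absorbed into $\sum_{m\neq k,k'}(\beta_m-1)$, so there is no gap.
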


As Corollary \ref{cor:vicm_asymp} shows, the asymptotic variance of any off-diagonal element $\gamma_{kk'}$ of the unmixing matrix depends also on components other than $z_k$ and $z_{k'}$ (via their kurtoses). Of the commonly used independent component analysis methods, FastICA, FOBI and JADE, FOBI is unique in this sense, partly explaining its inferiority to the other methods.

\subsection{Limiting distribution of the MFOBI estimate}

We provide the asymptotic properties of only the left-hand side unmixing matrix estimate $\hat{\boldsymbol{\Gamma}} := \hat{\boldsymbol{\Gamma}}_L$, the right-hand side version being again easily obtained by reversing the roles of rows and columns. Here $N=0$ or $N=1$ depending on the choice of the FOBI functional and the sample left and right covariance matrices are denoted by $\bar{\mbf S}{}_L := (\bar{s}{}^L_{kk'})$ and $\bar{\mbf S}{}_R := (\bar{s}{}^R_{kk'})$

\begin{theorem}\label{theo:micm_asymp}
Let $\textbf{Z}_1, \ldots ,\textbf{Z}_n$ be a random sample from a distribution of a matrix-valued $\textbf{Z} \in \mathbb{R}^{p \times q}$ having finite eighth moments and satisfying assumptions \ref{assu:micm_const} and \ref{assu:micm_asymp}. Assume further that $\boldsymbol{\Omega}_L = \textbf{I}_p$ and $\boldsymbol{\Omega}_R = \textbf{I}_q$, and that the left and right standardization functionals, $\bar{\mbf S}{} \lo L \hi {-1/2}$ and $\bar{\mbf S}{} \lo R \hi {-1/2}$, are chosen to be symmetric. Then there exists a sequence of left MFOBI estimates such that  $\hat{\boldsymbol{\Gamma}}\rightarrow_P \textbf{I}_p$ and
\begin{align*}
\sqrt{n} (\hat{\gamma}_{kk} - 1) &= -\dfrac{1}{2} \sqrt{n} (\bar{s}_{kk} - 1)   + o_P(1), \\
\sqrt{n} \hat{\gamma}_{kk'} &= \frac{\sqrt{n} \bar{Q} + \sqrt{n} \bar{R}^N - (\bar{\beta}_{k\bull} + b_N) \sqrt{n} \bar{s}_{kk'} }{(\bar{\beta}_{k\bull} - \bar{\beta}_{k'\bull})} + o_P(1), &\quad k \neq k',
\end{align*}
where $\bar{Q} = \bar{q}_{kk'} + \bar{q}_{k'k} + \sum_{m \neq k,k'} \bar{q}_{mkk'}$, $\bar{R}^N = \bar{r}_{kk'} + \bar{r}_{k'k} + \sum_{m \neq k,k'} \bar{r}^N_{mkk'}$, $b_0 = 2q + p - 1$ and $b_1 = qp + 1$.
\end{theorem}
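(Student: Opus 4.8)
The plan is to treat the left MFOBI estimate $\hat{\boldsymbol{\Gamma}} = \hat{\textbf{W}}_L^\top \bar{\mbf S}_L^{-1/2}$ as a first-order perturbation of $\textbf{I}_p$, expanding its two building blocks separately: the symmetric standardizer $\bar{\mbf S}_L^{-1/2}$ and the rotation $\hat{\textbf{W}}_L$ obtained as the eigenvectors of the sample functional $\hat{\textbf{B}}^N_L$ computed on the doubly standardized data. Under $\boldsymbol{\Omega}_L = \textbf{I}_p$, $\boldsymbol{\Omega}_R = \textbf{I}_q$ the population versions of $\textbf{S}_L$ and $\textbf{B}^N_L$ are $\textbf{I}_p$ and the diagonal matrix $a_N \textbf{I}_p + \sum_k \bar{\beta}_{k\bull} \textbf{E}^{kk}$ from \eqref{eq:rotation svd} (with $\tau = 1$), whose eigenvalues $a_N + \bar{\beta}_{k\bull}$ are distinct by Assumption \ref{assu:micm_asymp}. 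This distinctness is what lets me select a measurable sequence of eigenvectors, fixing signs and order, with $\hat{\textbf{W}}_L \rightarrow_P \textbf{I}_p$ and hence $\hat{\boldsymbol{\Gamma}} \rightarrow_P \textbf{I}_p$. Expanding the symmetric root around $\textbf{I}_p$ gives $\bar{\mbf S}_L^{-1/2} = \textbf{I}_p - \tfrac12 (\bar{\mbf S}_L - \textbf{I}_p) + o_P(n^{-1/2})$, so its $(k,k)$ entry contributes $-\tfrac12(\bar{s}_{kk} - 1)$ and its $(k,k')$ entry contributes $-\tfrac12 \bar{s}_{kk'}$.

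Next I would expand the rotation by the standard symmetric-eigenvector perturbation formula. Writing $\hat{\textbf{B}}^N_L = \textbf{B}_0 + \textbf{E}_n + o_P(n^{-1/2})$ with $\textbf{B}_0$ the diagonal population matrix and $\textbf{E}_n = O_P(n^{-1/2})$, the $k$th eigenvector is $\textbf{e}_k + \sum_{k' \ne k} (\textbf{E}_n)_{k'k} (\bar{\beta}_{k\bull} - \bar{\beta}_{k'\bull})^{-1} \textbf{e}_{k'} + o_P(n^{-1/2})$, so $\hat{\textbf{W}}_L = \textbf{I}_p + \textbf{T} + o_P(n^{-1/2})$ with $\textbf{T}_{k'k} = (\textbf{E}_n)_{k'k}/(\bar{\beta}_{k\bull} - \bar{\beta}_{k'\bull})$ and $\textbf{T}_{kk} = O_P(n^{-1})$ by unit normalization. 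Multiplying the two expansions yields $\hat{\gamma}_{kk} - 1 = -\tfrac12(\bar{s}_{kk} - 1) + o_P(n^{-1/2})$, which is the first displayed identity, and for $k \ne k'$, $\hat{\gamma}_{kk'} = \textbf{T}_{k'k} - \tfrac12 \bar{s}_{kk'} + o_P(n^{-1/2})$. Everything now reduces to identifying the single off-diagonal entry $(\textbf{E}_n)_{k'k}$.

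The core step is to decompose $(\textbf{E}_n)_{k'k}$ into a raw part and a standardization-correction part. For the raw part I would write out the $(k',k)$ entry of $\tfrac1q \tfrac1n \sum_i \textbf{B}^N(\textbf{Z}_i)$ as a sum of $z$-quadruples over a running row index $a$ and two column indices, and sort terms by whether the column indices coincide or differ: the coincident-column terms, taken over $a = k$, $a = k'$ and $a = m \ne k,k'$, reproduce $\bar{q}_{kk'} + \bar{q}_{k'k} + \sum_{m \ne k,k'} \bar{q}_{mkk'} = \bar{Q}$, while the distinct-column terms reproduce $\bar{r}_{kk'} + \bar{r}_{k'k} + \sum_{m \ne k,k'} \bar{r}^N_{mkk'} = \bar{R}^N$, the factor $\|\textbf{Z}\|_F^2$ selecting $\bar{r}^1$ in place of $\bar{r}^0$ when $N=1$; the $\gamma_{kl}$-centering recorded in the definitions of $\bar{q}$ in Section \ref{sec:nota} is what absorbs the first-order effect of mean removal. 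For the correction part I would linearize $\textbf{B}^N$ in both standardizers and replace the accompanying sample averages of $\textbf{Z}$-moments by their diagonal population limits (the error being $o_P(n^{-1/2})$). A parity/row-independence argument then removes the right-hand correction: since $\bar{\mbf S}_R^{-1/2}$ acts only on columns, every placement of $\bar{\mbf S}_R^{-1/2} - \textbf{I}_q$ leaves one of the distinct rows $k, k'$ appearing to an odd order, killing the off-diagonal $(k',k)$ expectation. Only the four placements of the left correction survive, and upon evaluating the sandwiched expectations $\E(\textbf{Z}\textbf{Z}^\top M \textbf{Z}\textbf{Z}^\top)$ and $\E(\|\textbf{Z}\|_F^2 \textbf{Z}\textbf{Z}^\top)$ for symmetric $M$ they assemble into a multiple of $\bar{s}_{kk'}$. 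Collecting both parts, dividing by $\bar{\beta}_{k\bull} - \bar{\beta}_{k'\bull}$ and adding the $-\tfrac12 \bar{s}_{kk'}$ from the standardizer should produce the claimed numerator $\bar{Q} + \bar{R}^N - (\bar{\beta}_{k\bull} + b_N)\bar{s}_{kk'}$.

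The main obstacle is precisely this last bookkeeping of the left standardization correction: one must evaluate the fourth-order sandwiched expectations for each of the four placements, carefully track the trace and dimension factors coming from $\|\textbf{Z}\|_F^2$ and the $1/q$ normalization, and verify that the resulting coefficient of $\bar{s}_{kk'}$ combines with the eigenvalue-gap contribution $-\tfrac12(\bar{\beta}_{k\bull} - \bar{\beta}_{k'\bull})\bar{s}_{kk'}$ to give exactly $-(\bar{\beta}_{k\bull} + b_N)\bar{s}_{kk'}$ with $b_0 = 2q + p - 1$ and $b_1 = qp + 1$; the symmetry of the correction in $k$ and $k'$ is what makes the two kurtosis averages collapse into the single combination above. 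Once the expansion is established termwise, the joint asymptotic normality of the moment quantities $\bar{s}$, $\bar{q}$, $\bar{r}$ noted in Section \ref{sec:nota}, together with $\hat{\boldsymbol{\Gamma}} \rightarrow_P \textbf{I}_p$ and Slutsky's theorem, delivers the limiting distribution of $\sqrt{n}\,\hat{\gamma}_{kk'}$.
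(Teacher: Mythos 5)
Your proposal is correct and follows essentially the same route as the paper's proof: expand the symmetric standardizer as $\bar{\mbf S}_L^{-1/2} = \textbf{I}_p - \tfrac12(\bar{\mbf S}_L - \textbf{I}_p) + o_P(n^{-1/2})$, use the first-order eigenvector perturbation of the sample FOBI matrix (the paper's equation \eqref{eq:proofs_asymp}), and then decompose its off-diagonal entry into the raw moment part, which partitions into $\bar{Q} + \bar{R}^N$ exactly as you describe, plus left-standardization corrections that collapse by symmetry into $-(\bar{\beta}_{k\bull} + b_N)\bar{s}_{kk'}$. Your parity/row-independence argument for why the right standardizer contributes nothing is a cleaner way of stating what the paper's term-by-term Slutsky expansion establishes implicitly, but it is the same underlying computation, not a different method.
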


\begin{corollary}\label{cor:micm_asymp}
i) Under the assumptions of Theorem \ref{theo:micm_asymp} the limiting distribution of $\sqrt{n} \, \vec (\hat{\boldsymbol{\Gamma}} - \textbf{I}_p)$ is multivariate normal with mean vector $\textbf{0}_{p^2}$ and the following asymptotic variances.
\begin{alignat*}{3}
&ASV&(\hat{\gamma}_{kk}) &= \frac{\bar{\beta}_{k\bull} - 1}{4q}, &\\
&ASV&(\hat{\gamma}_{kk'}) &= \frac{\bar{\omega}_{k\bull} + \bar{\omega}_{k'\bull} -\bar{\beta}_{k\bull}^2 + 2 \delta_{kl} + (q - 1) \bar{\beta}_{k\bull} + (q - 7) \bar{\beta}_{k'\bull} + c_N}{q(\bar{\beta}_{k\bull} - \bar{\beta}_{k'\bull})^2}, &\quad k \neq k',
\end{alignat*}
where $c_0 = \sum_{m \neq k k'} \bar{\beta}_{m\bull} + pq - 2p - 4q + 15$ and $c_1 = q \sum_{m \neq k k'} \bar{\beta}_{m\bull} - pq + 11$.
\end{corollary}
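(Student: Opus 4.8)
The plan is to treat the corollary as a delta-method consequence of the linear representation established in Theorem \ref{theo:micm_asymp}. First I would observe that each of the building blocks $\bar{s}_{kk'}$, $\bar{q}_{kk'}$, $\bar{q}_{mkk'}$, $\bar{r}_{kk'}$ and $\bar{r}^N_{mkk'}$ is a sample average of an i.i.d.\ function of $\textbf{Z}_i$ with a finite variance (guaranteed by the finite eighth moment assumption), so by the multivariate central limit theorem the vector collecting all of these quantities is jointly asymptotically normal and centered at its expectation. Since Theorem \ref{theo:micm_asymp} writes $\sqrt{n}(\hat\gamma_{kk}-1)$ and $\sqrt{n}\hat\gamma_{kk'}$ as fixed linear combinations of these centered averages plus an $o_P(1)$ remainder, Slutsky's theorem together with the fact that affine images of Gaussian vectors are Gaussian immediately gives the joint asymptotic normality of $\sqrt{n}\,\vec(\hat{\boldsymbol{\Gamma}}-\textbf{I}_p)$ with mean $\textbf{0}_{p^2}$. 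The remaining content is then purely the computation of the limiting variances of these linear combinations.

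For the diagonal entries the computation is short. Writing $\bar{s}_{kk} = (1/q)\sum_{l}(1/n)\sum_i z_{i,kl}^2$ and using that the columns $z_{i,k1},\dots,z_{i,kq}$ are mutually independent with $\var(z_{kl}^2)=\beta_{kl}-1$, the cross-column covariances vanish and $\var\{\sqrt{n}(\bar{s}_{kk}-1)\}\to (1/q^2)\sum_l(\beta_{kl}-1)=(\bar\beta_{k\bull}-1)/q$. Multiplying by the factor $1/4$ coming from the representation $\sqrt{n}(\hat\gamma_{kk}-1)=-\tfrac12\sqrt{n}(\bar s_{kk}-1)+o_P(1)$ yields $(\bar\beta_{k\bull}-1)/(4q)$, as claimed.

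For the off-diagonal entries I would expand the asymptotic variance of $\sqrt{n}\hat\gamma_{kk'}$ as a quadratic form in the three centered averages $\sqrt{n}\bar s_{kk'}$, $\sqrt{n}\bar Q$ and $\sqrt{n}\bar R^N$, scaled by $(\bar\beta_{k\bull}-\bar\beta_{k'\bull})^{-1}$. This reduces the problem to the six entries $\var(\sqrt{n}\bar s_{kk'})$, $\var(\sqrt{n}\bar Q)$, $\var(\sqrt{n}\bar R^N)$ and the three covariances, for instance $\cov(\sqrt{n}\bar Q,\sqrt{n}\bar R^N)$. Each entry is evaluated by substituting the definitions $\bar Q=\bar q_{kk'}+\bar q_{k'k}+\sum_{m\neq k,k'}\bar q_{mkk'}$ and $\bar R^N=\bar r_{kk'}+\bar r_{k'k}+\sum_{m\neq k,k'}\bar r^N_{mkk'}$, expanding the resulting products of sums, and using the mutual independence of the components to factor every expectation across distinct components, which annihilates all terms in which some component appears to the first power. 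The surviving contributions are then organized in terms of the marginal moments $\gamma_{kl},\beta_{kl},\omega_{kl}$: the $\bar\omega_{k\bull}+\bar\omega_{k'\bull}$ terms come from the variances of the cubic quantities entering $\bar Q$, the $\delta_{kk'}$ term from the within-column cross-kurtosis average $(1/q)\sum_l\beta_{kl}\beta_{k'l}$ that couples rows $k$ and $k'$, and the remaining linear and constant pieces, including the split between $c_0$ and $c_1$, from the $b_N$-dependent standardization term and the $\bar r^N_{mkk'}$ contributions.

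The main obstacle will be the combinatorial bookkeeping in the off-diagonal variance, in particular the $\bar R^N$ contributions, which have no vector-valued counterpart and involve sums over distinct column pairs $l\neq l'$ together with three row indices $m,k,k'$. Keeping careful track of which coincidences among the row indices and among the within-observation column indices produce nonzero expectations, and verifying that the resulting expression collapses to the stated closed form with the constants $c_0$ and $c_1$, is where the bulk of the work lies. A convenient consistency check is the case $q=1$, in which all $\bar r$ and $\delta$ terms vanish, $\bar\beta_{k\bull}=\beta_k$, and the formula must reduce to the vector FOBI asymptotic variance of Corollary \ref{cor:vicm_asymp}.
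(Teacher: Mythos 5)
Your proposal is correct and matches the paper's own route: the paper likewise takes the linear representation from Theorem \ref{theo:micm_asymp}, relies on the joint asymptotic normality of the moment-based building blocks $\bar{s}_{kk'}$, $\bar{q}_{kk'}$, $\bar{q}_{mkk'}$, $\bar{r}_{kk'}$, $\bar{r}^N_{mkk'}$, and obtains the stated variances as a quadratic form using the pairwise covariances of these terms, which it records in Tables \ref{tab:nota_varcov1} and \ref{tab:nota_varcov2}. Your diagonal-entry calculation, your attribution of the $\delta_{kk'}$ term to the within-column coupling $\mathrm{E}(z_{kl}^4)\mathrm{E}(z_{k'l}^4)$, and your $q=1$ consistency check against Corollary \ref{cor:vicm_asymp} are all sound and consistent with the paper's bookkeeping.
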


\begin{proof}

The proof for the consistency of the estimator is obtained similarly as in the proof of Theorem 5.1.1 in \cite{virta2015joint}.

Write then
\begin{alignat*}{2}
\bar{\textbf{L}} = (\bar{l}_{kk'}) := \bar{\textbf{S}}{}_L^{-1/2} \rightarrow_P \textbf{I}_p, \quad & \bar{\textbf{L}}{}^* := \bar{\textbf{L}}{}^\top \bar{\textbf{L}} \rightarrow_P \textbf{I}_p, \\
\bar{\textbf{R}} = (\bar{r}_{ll'}) := \bar{\textbf{S}}{}_R^{-1/2} \rightarrow_P \textbf{I}_q, \quad & \bar{\textbf{R}}{}^* := \bar{\textbf{R}}{}^\top \bar{\textbf{R}} \rightarrow_P \textbf{I}_q.
\end{alignat*}
Limiting normal distributions of the components of the sample covariance functionals imply that $\sqrt{n}(\bar{\textbf{S}}_L - \textbf{I}_p) = O_P(1)$ and $\sqrt{n}(\bar{\textbf{S}}_R - \textbf{I}_q) = O_P(1)$ and the following two asymptotic expansions are then easy to prove using Slutsky's theorem, see, e.g., the supplementary material to \cite{virta2015joint}.
\begin{align*}
\sqrt{n}(\bar{\textbf{L}} - \textbf{I}_p) = & -\frac{1}{2} \sqrt{n} (\bar{\textbf{S}}{}_L - \textbf{I}_p) + o_P(1), \\
\sqrt{n} (\bar{\textbf{L}}{}^\top \bar{\textbf{L}} - \textbf{I}_p ) = &  \sqrt{n} (\bar{\textbf{L}} - \textbf{I}_p) + \sqrt{n} (\bar{\textbf{L}}{}^\top - \textbf{I}_p) + o_P(1).
\end{align*}

The estimated left unmixing functional is then $\hat{\boldsymbol{\Gamma}} := \hat{\textbf{W}}{}^\top_L \bar{\textbf{L}}$, where $\hat{\textbf{W}}{}^\top_L$ is obtained from the eigendecomposition of the sample left FOBI functional $\bar{\textbf{B}}{}_L^N = (\bar{b}^N_{kk'}) = \hat{\textbf{W}}_L \hat{\boldsymbol{\Lambda}}{}_L^N \hat{\textbf{W}}{}^\top_L$, where $\hat{\boldsymbol{\Lambda}}{}_L^N \rightarrow_P \boldsymbol{\Lambda}{}_L^N$. The asymptotic behavior of the diagonal elements $\sqrt{n} \hat{\gamma}_{kk}$ of the estimated left unmixing functional can be derived similarly as in the proof of Theorem 4.1.2 of \cite{virta2015joint}. For the off-diagonal elements, using Slutsky's theorem and the fact that $\hat{\boldsymbol{\Lambda}}{}_L^N$ is diagonal, it is straightforward to show that we have for an arbitrary $(k, k')$-element of the estimated left unmixing functional
\begin{align}\label{eq:proofs_asymp}
\sqrt{n} \hat{\gamma}_{kk'} = \frac{\sqrt{n} \bar{b}{}^N_{kk'} + (\bar{\beta}_{k\bull} - \bar{\beta}_{k'\bull}) \sqrt{n} \bar{l}_{kk'}}{\bar{\beta}_{k\bull} - \bar{\beta}_{k'\bull}} + o_P(1), \quad k \neq k'.
\end{align}

The problem lies then in finding the asymptotic behavior of an arbitrary off-diagonal element $\sqrt{n} \hat{b}{}^N_{kk'}$. Consider first the case $N=0$ and write $\bar{\textbf{B}}{}^0_L$ open according to its definition:
\[\sqrt{n} \left( \bar{\textbf{B}}{}_L^0 - \boldsymbol{\Lambda}{}^0_L \right) = \bar{\textbf{L}} \left( \sqrt{n} \frac{1}{n} \sum_{i=1}^n \tilde{\textbf{Z}}_i \bar{\textbf{R}}{}^* \tilde{\textbf{Z}}{}_i^\top  \bar{\textbf{L}}{}^* \tilde{\textbf{Z}}_i \bar{\textbf{R}}{}^* \tilde{\textbf{Z}}{}_i^\top  \right) \bar{\textbf{L}}{}^\top - \sqrt{n} \boldsymbol{\Lambda}{}^0_L, \]
where $\tilde{\textbf{Z}}_i := \textbf{Z}_i - \bar{\textbf{Z}}$. Inspecting a single off-diagonal element yields
\[\sqrt{n} \bar{b}{}^0_{kk'} = \frac{1}{q} \sum_{defgstuv} \left( \sqrt{n} \, \bar{l}_{kd} \bar{r}{}^*_{ef} \bar{l}{}^*_{gs} \bar{r}{}^*_{tu} \bar{l}_{k'v} \frac{1}{n} \sum_{i=1}^n \tilde{z}_{i, de} \tilde{z}_{i, gf} \tilde{z}_{i, st} \tilde{z}_{i, vu}\right). \]
Next, expand each of the covariance terms one-by-one starting with $\sqrt{n} \bar{l}_{kd} = \sqrt{n} (\bar{l}_{kd} - \delta_{kd}) + \sqrt{n} \delta_{kd}$. After each expansion the first term has a multiplicand that is $O_P(1)$ and Slutsky's theorem guarantees the convergence of the corresponding product. Note also that
\[\frac{1}{n} \sum_{i=1}^n \tilde{z}_{i, de} \tilde{z}_{i, gf} \tilde{z}_{i, st} \tilde{z}_{i, vu} \rightarrow_P \E \left( z_{de} z_{gf} z_{st} z_{vu} \right). \]
The number of sums decreases at each step finally resulting into
\begin{align*}
\sqrt{n} \bar{b}{}^0_{kk'} &= (2q + p - 1 + \bar{\beta}_{k'\bull}) \sqrt{n} \hat{l}_{kk'} + (2q + p - 1 + \bar{\beta}_{k\bull}) \sqrt{n} \hat{l}_{k'k} \\
&+ \sum_{egt} \sqrt{n} \frac{1}{n} \sum_{i=1}^n \tilde{z}_{i, ke} \tilde{z}_{i, ge} \tilde{z}_{i, gt} \tilde{z}_{i, k't} + o_P(1),
\end{align*}
the last proper term of which partitions into the quantities defined in Section \ref{sec:nota} as
\[\sqrt{n}\bar{q}_{kk'} + \sqrt{n}\bar{q}_{k'k} + \sum_{m \neq k,k'} \sqrt{n}\bar{q}_{mkk'} + \sqrt{n} \bar{r}_{kk'} + \sqrt{n} \bar{r}_{k'k} + \sum_{m \neq k,k'} \sqrt{n} \bar{r}^0_{mkk'},\]
after which plugging everything into expression \eqref{eq:proofs_asymp} gives the desired result.

The proof for the case $N=1$ is almost similar, only the starting expression is somewhat different:
\[ \sqrt{n} \bar{b}{}^1_{kk'} = \frac{1}{q} \sum_{defghstuv}  \sqrt{n} \bar{l}_{kd} \bar{r}{}^*_{ef} \bar{l}_{k'g} \bar{l}_{hs} \bar{r}{}^*_{tu} \bar{l}_{hv} \frac{1}{n} \sum_{i=1}^n \tilde{z}_{i, de} \tilde{z}_{i, gf} \tilde{z}_{i, st} \tilde{z}_{i, vu}. \]
For both choices of $N$ the asymptotic variances of Corollary \ref{cor:micm_asymp} are then straightforward, albeit a bit tedious, to compute using both Tables \ref{tab:nota_varcov1} and \ref{tab:nota_varcov2} containing covariances between the different terms in addition to the following covariances not fitting into the tables: $nq \cdot \cov [\bar{q}_{mkk'}, \bar{q}_{m'kk'}] = 1$, $nq \cdot \cov [\bar{q}_{mkk'}, \bar{r}^0_{m'kk'}] = 0$, $nq \cdot \cov [\bar{q}_{mkk'}, \bar{r}^1_{m'kk'}] = q^*$, $nq \cdot \cov [\bar{r}^0_{mkk'}, \bar{r}^0_{m'kk'}] = 0$ and $nq \cdot \cov [\bar{r}^1_{mkk'}, \bar{r}^1_{m'kk'}] = q^{*2}$, where $m \neq m'$ and $q^* := q - 1$.

\begin{table}[h]
\centering
\caption{Covariances of $\sqrt{nq}$ times the row and column quantities, $k \neq k' \neq m$.}
\begin{tabular}{l||c|c|c}
 & $\bar{q}_{kk'}$ & $\bar{q}_{k'k}$ & $\bar{q}_{mkk'}$  \\ \hline \hline
 & & & \\[-2ex]
$\bar{q}_{kk'}$ & $\bar{\omega}_{k\bull}$ & $\delta_{kk'} + \bar{\beta}_{k\bull} \bar{\beta}_{k'\bull}$ & $\bar{\beta}_{k\bull}$ \\
$\bar{q}_{k'k}$ & $-$ & $\bar{\omega}_{k'\bull}$ & $\bar{\beta}_{k'\bull}$ \\
$\bar{q}_{mkk'}$ & $-$ & $-$ & $\bar{\beta}_{m\bull}$
\end{tabular}
\label{tab:nota_varcov1}
\end{table}

\begin{table}[h]
\centering\caption{Covariances of $\sqrt{nq}$ times the row and column quantities, $k \neq k' \neq m$ and $q^* := q - 1$.}
\begin{tabular}{l||c|c|c|c|c}
 & $\bar{r}_{kk'}$ & $\bar{r}_{k'k}$ & $\bar{r}^0_{mkk'}$ & $\bar{r}^1_{mkk'}$ & $\bar{s}_{kk'}$ \\ \hline \hline
 & & & & & \\[-2ex]
$\bar{q}_{kk'}$ & $q^*\bar{\beta}_{k\bull}$ & $q^*\bar{\beta}_{k\bull}$ & $0$ & $q^*\bar{\beta}_{k\bull}$ & $\bar{\beta}_{k\bull}$ \\
$\bar{q}_{k'k}$ & $q^*\bar{\beta}_{k'\bull}$ & $q^*\bar{\beta}_{k'\bull}$ & $0$ & $q^*\bar{\beta}_{k'\bull}$ & $\bar{\beta}_{k'\bull}$ \\
$\bar{q}_{mkk'}$ & $q^*$ & $q^*$ & $0$ & $q^*$ & $1$ \\
$\bar{r}_{kk'}$ & $q^*(q-2+\bar{\beta}_{k\bull})$ & $q^{*2}$ & $0$ & $q^{*2}$ & $q^*$ \\
$\bar{r}_{k'k}$ & $-$ & $q^*(q-2+\bar{\beta}_{k'\bull})$ & $0$ & $q^{*2}$ & $q^*$ \\
$\bar{r}^0_{mkk'}$ & $-$ & $-$ & $
q^*$ & $-$ & $0$ \\
$\bar{r}^1_{mkk'}$ & $-$ & $-$ & $-$ & $q^*(q-2+\bar{\beta}_{m\bull})$ & $q^*$ \\
$\bar{s}_{kk'}$ & $-$ & $-$ & $-$ & $-$ & $1$
\end{tabular}
\label{tab:nota_varcov2}
\end{table}
\end{proof}

\begin{remark}\label{rem:tfobi}
The limiting distributions of the TFOBI estimates, $\hat{\boldsymbol{\Gamma}}_m := \hat{\textbf{W}}{}_m^\top \hat{\textbf{S}}{}_m^{-1/2}$, $m = 1, \ldots ,r$, follow straightforwardly from the results of the matrix case; using the $m$-flattening of tensors from Section \ref{section:tensor} we can express the $m$-mode tensor product as $\mbf Z \odot \lo {-m} \mbf Z = \mbf Z \loo m \mbf Z \loo m ^\top$, where the matrices $\mbf Z \loo m$, $m = 1, \ldots ,r$, obey the matrix independent component model and have distinct kurtosis row means. Thus the task of finding the $m$th rotation in TFOBI reduces to that of finding the left rotation in MFOBI. Additionally, \eqref{eq:tensor_flatten_kron} shows that the standardization matrices of modes other than $m$ are in the $m$-flattening of the standardized observations collected to the multiple Kronecker product on the right-hand side both satisfying the assumption $\hat{\textbf{R}} \rightarrow_P \textbf{I}$ and contributing nothing to the asymptotics of mode $m$, as shown in the proof of Theorem \ref{theo:micm_asymp}. The limiting distributions for $\hat{\boldsymbol{\Gamma}}_m$ are thus obtained by applying Theorem \ref{theo:micm_asymp} into the empirical distributions of $\mbf Z \loo m$, $m = 1, \ldots ,r$.
\end{remark}

Comparison of the expressions for the two choices of $N$ in Corollary \ref{cor:micm_asymp} immediately yields the following result.

\begin{corollary}\label{cor:micm_acomp}
Assume $q > 1$ and denote by $\textbf{Z}^{kk'}$ the matrix obtained by dropping rows $k$ and $k'$ from $\textbf{Z}$, $k \neq k'$. Then, for $p > 2$, the choice $N = 1$ is asymptotically superior to the choice $N = 0$ in estimating $\hat{\gamma}_{kk'}$ if and only if the average kurtosis of the elements of $\textbf{Z}^{kk'}$ is smaller than $2$, i.e., when
\[\frac{1}{p - 2} \sum_{m \neq k, k'} \bar{\beta}_{m\bull} < 2.\]
If $p = 2$ then the methods are asymptotically equivalent regardless of the distribution of $\textbf{Z}$.
\end{corollary}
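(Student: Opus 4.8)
The plan is to exploit the structural identity of the two asymptotic-variance formulas in Corollary~\ref{cor:micm_asymp}: the denominator $q(\bar{\beta}_{k\bull}-\bar{\beta}_{k'\bull})^2$ and every numerator term except $c_N$ are the same for $N=0$ and $N=1$. Consequently the entire comparison collapses to comparing the two constants $c_0$ and $c_1$, and the choice $N=1$ is asymptotically superior to $N=0$ in estimating $\hat{\gamma}_{kk'}$ exactly when $c_1<c_0$, provided the common denominator is positive.

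First I would verify the sign of that denominator. Since $q>1$ and, by Assumption~\ref{assu:micm_asymp}, the row-mean kurtoses $\bar{\beta}_{1\bull},\ldots,\bar{\beta}_{p\bull}$ are distinct, we have $(\bar{\beta}_{k\bull}-\bar{\beta}_{k'\bull})^2>0$ for $k\neq k'$, so $q(\bar{\beta}_{k\bull}-\bar{\beta}_{k'\bull})^2>0$. Hence the sign of the difference between the $N=1$ and $N=0$ asymptotic variances equals the sign of $c_1-c_0$.

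Then I would compute $c_1-c_0$ directly from their definitions. Writing $S:=\sum_{m\neq k,k'}\bar{\beta}_{m\bull}$ and substituting yields
\[
c_1-c_0=(q-1)S-2pq+2p+4q-4.
\]
The key simplification is that the part not involving $S$ factors as $-2pq+2p+4q-4=-2(q-1)(p-2)$, so that
\[
c_1-c_0=(q-1)\left\{S-2(p-2)\right\}.
\]

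Finally I would read off the two cases. Because $q>1$, the factor $q-1$ is strictly positive, so $c_1<c_0$ if and only if $S<2(p-2)$; for $p>2$, dividing by $p-2>0$ produces the stated criterion $\tfrac{1}{p-2}\sum_{m\neq k,k'}\bar{\beta}_{m\bull}<2$. For $p=2$ the index set $\{m:m\neq k,k'\}$ is empty, whence $S=0$ and $p-2=0$, so the bracket vanishes and $c_1-c_0=0$; the two methods are then asymptotically equivalent regardless of the distribution of $\textbf{Z}$. The only mildly delicate point is the middle factorization; everything else is immediate from Corollary~\ref{cor:micm_asymp}.
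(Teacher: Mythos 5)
Your proposal is correct and follows essentially the same route as the paper, which derives this corollary directly by comparing the two asymptotic-variance expressions in Corollary~\ref{cor:micm_asymp}: since only $c_N$ differs between the two choices and the common denominator $q(\bar{\beta}_{k\bull}-\bar{\beta}_{k'\bull})^2$ is positive under Assumption~\ref{assu:micm_asymp}, everything reduces to the sign of $c_1-c_0=(q-1)\{\sum_{m\neq k,k'}\bar{\beta}_{m\bull}-2(p-2)\}$. Your write-up merely makes explicit the factorization and the $p=2$ degenerate case that the paper leaves as ``immediate.''
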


According to Corollary \ref{cor:micm_acomp}, to justify the use of the normed version ($N=1$) one would have to assume not only one, but several elements of $\textbf{Z}$ to have kurtosis values below 2. To gain some insight on the strictness of the inequality in Corollary \ref{cor:micm_acomp}, we use the moment inequality of \cite{klaassen2000squared} stating that for unimodal distributions with finite fourth moments we have
\[\gamma^2 \leq \beta - \frac{189}{125}.\]
Combining this bound with Corollary \ref{cor:micm_acomp} then reveals that a necessary condition for the superiority of the normed version is that most elements of $\textbf{Z}$ must be multimodal or almost symmetric (average squared skewness has to be smaller than $0.488$). In the second simulation study of Section \ref{sec:simu} we will conduct a comparison of the two versions under different settings but as the condition in Corollary \ref{cor:micm_acomp} is in general very restrictive and unrealistic the other simulation studies are done using the non-normed versions of MFOBI and TFOBI.

To provide more insight into the second part of Corollary \ref{cor:micm_acomp} where $p = 2$, recall that the Cayley-Hamilton theorem states that every square matrix $\textbf{A} \in \mathbb{R}^{p \times p}$ is annihilated by its characteristic polynomial \citep{roman2005advanced}. For $p=2$ this takes the simple form
\[\textbf{A}^2 - \mathrm{tr}(\textbf{A}) \textbf{A} + \mathrm{det}(\textbf{A}) \textbf{I}_2 = \textbf{0}. \]
Assume now that $\textbf{X}_1, \ldots, \textbf{X}_n$ is a sample of tensors of the same size and that the $m$th mode of $\textbf{X}_1$ has length two. Then, $\textbf{X}_i \odot_{-m} \textbf{X}_i$ is of size $2 \times 2$ for all $i = 1, \ldots , n$ and we have
\[ (\textbf{X}_i \odot_{-m} \textbf{X}_i)^2 = \| \textbf{X}_i \|^2_F (\textbf{X}_i \odot_{-m} \textbf{X}_i) - \mathrm{det}(\textbf{X}_i \odot_{-m} \textbf{X}_i) \textbf{I}_2, \]
where we have utilized the $m$-flattening, $\mathrm{tr}(\textbf{X}_i \odot_{-m} \textbf{X}_i) = \mathrm{tr}(\textbf{X}_{i(m)}\textbf{X}_{i(m)}^\top) = \| \textbf{X}_{i(m)} \|^2_F = \| \textbf{X}_{i} \|^2_F$. Consequently, the sample estimates of $\textbf{B}_{(m)}^0$ and $\textbf{B}_{(m)}^1$ in Definition \ref{definition:m-mode moments} have a difference proportional to the identity matrix, implying that they have the same sets of eigenvectors. Thus for modes of length two the performances of the normed and non-normed version are not only equivalent in the limit, but equivalent for finite samples as well.

\subsection{Comparing the limiting efficiencies of the FOBI and TFOBI estimates}

As the asymptotic variances in Corollaries \ref{cor:vicm_asymp} and \ref{cor:micm_asymp} are rather complicated and each of them relates only to a single element of a single matrix, to compare them as a whole a more concise measure of asymptotic accuracy is desired. For this we first review the \textit{minimum distance index} (MDI) \citep{ilmonen2010new} computed as
\[ \hat{D}_m := D(\hat{\boldsymbol{\Gamma}}_{(m)}, \boldsymbol{\Omega}_m) = \frac{1}{\sqrt{p_m - 1}} \underset{\textbf{C} \in \mathcal{C}^{p_m}}{\mathrm{inf}}\| \textbf{C} \hat{\boldsymbol{\Gamma}}_{(m)} \boldsymbol{\Omega}_m - \textbf{I}_{p_m} \|_F, \]
where $\boldsymbol{\Omega}_m \in \mathbb{R}^{p_m \times p_m}$ is the true $m$-mode mixing matrix and $\hat{\boldsymbol{\Gamma}}_{(m)}$ is the $m$-mode unmixing matrix estimate. The minimum distance index is a measure of how far away the matrix $\hat{\boldsymbol{\Gamma}}_{(m)} \boldsymbol{\Omega}_m$ is from the identity matrix, invariant to order, scales and signs of rows. The index satisfies $0 \leq \hat{D} \leq 1$ with the value $0$ indicating that $\hat{\boldsymbol{\Gamma}}_{(m)} = \boldsymbol{\Omega}^{-1}$ up to permutation, scaling and sign-change of its rows. The index further obeys the limit result $n ( p_m - 1 ) \hat{D}^2_m \rightarrow_d \mathcal{D}_m$ where $\mathcal{D}_m$ is a distribution with the expected value
\begin{align}\label{eq:limit_ev}
E_m := \sum_{k=1}^{p_m-1}\sum_{k'=k+1}^{p_m} \left\{ ASV(\hat{\gamma}^{(m)}_{kk'}) + ASV(\hat{\gamma}^{(m)}_{k'k}) \right\},
\end{align}
where $\hat{\gamma}^{(m)}_{kk'}$ is the $(k, k')$ element of $\hat{\boldsymbol{\Gamma}}_{(m)}$. Consequently $E_m$, the sum of asymptotic variances of the off-diagonal elements of $\hat{\boldsymbol{\Gamma}}_{(m)}$, provides a single-number measure of the asymptotic performance of TFOBI in the $m$th mode.

However, as FOBI produces only a single number $E_1$ and TFOBI one for each mode, $E_1, \ldots , E_r$, we still need to somehow combine the latter to allow comparisons between FOBI and TFOBI. Both the FOBI unmixing estimate $\hat{\boldsymbol{\Gamma}}$ and the Kronecker product $\hat{\boldsymbol{\Gamma}}_{(r)} \otimes \ldots \otimes \hat{\boldsymbol{\Gamma}}_{(1)}$ of the TFOBI unmixing estimates estimate the inverse of the same matrix $\boldsymbol{\Omega} := \boldsymbol{\Omega}_r \otimes \ldots \otimes \boldsymbol{\Omega}_1$ and thus the comparison should be done between them. A link connecting the minimum distance indices of the Kronecker product $\hat{\boldsymbol{\Gamma}}_{(r)} \otimes \ldots \otimes \hat{\boldsymbol{\Gamma}}_{(1)}$ and its component matrices is given next.

\begin{theorem}\label{theo:kronecker_mdi}
Let the sample $\textbf{X}_1, \ldots , \textbf{X}_n  \in \mathbb{R}^{p_1 \times \ldots \times p_r}$ be generated by the tensor-valued independent component model \eqref{eq:tic} with identity mixing, $\boldsymbol{\Omega}_m = \textbf{I}_{p_m}$, $m = 1, \ldots, r$ (in our case also orthogonal mixing suffices, see below). Assume that the unmixing estimates have the limiting normal distributions $\sqrt{n} \mathrm{vec} ( \hat{\boldsymbol{\Gamma}}_{(m)} - \textbf{I}_{p_m} ) \rightarrow_d \mathcal{N}(\textbf{0}, \boldsymbol{\Sigma}_m)$ and denote $p := p_1 \ldots p_r$. Then we have
\[n (p - 1) \hat{D}^2(\hat{\boldsymbol{\Gamma}}_{(r)} \otimes \ldots \otimes \hat{\boldsymbol{\Gamma}}_{(1)}, \textbf{I}_p) = \sum_{m = 1}^r \frac{p}{p_m} n (p_m - 1) \hat{D}^2(\hat{\boldsymbol{\Gamma}}_{(m)}, \textbf{I}_{p_m}) + o_P(1). \]
\end{theorem}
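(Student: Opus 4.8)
The plan is to reduce both sides to sums of squared off-diagonal entries of the relevant unmixing estimates and then match them combinatorially. The engine is the standard asymptotic linearization of the minimum distance index: whenever a $d \times d$ unmixing estimate $\hat{\boldsymbol{\Gamma}}$ satisfies $\hat{\boldsymbol{\Gamma}} \rightarrow_P \textbf{I}_d$ at $\sqrt{n}$-rate, the infimum over $\textbf{C} \in \mathcal{C}^d$ is asymptotically attained at $\textbf{C} = \textbf{I}_d$ up to a diagonal scaling. Indeed, any nontrivial permutation or sign-change keeps $\| \textbf{C}\hat{\boldsymbol{\Gamma}} - \textbf{I}_d \|_F$ bounded away from zero, so for large $n$ we may restrict to $\textbf{C}$ diagonal with positive entries; optimizing the scaling absorbs the diagonal deviations $\hat{\gamma}_{kk} - 1$ into terms of order $O_P(n^{-2})$ while leaving the off-diagonal contribution unchanged to leading order. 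This yields the expansion
\[ n(d-1)\hat{D}^2(\hat{\boldsymbol{\Gamma}}, \textbf{I}_d) = \sum_{k \neq k'} n\,\hat{\gamma}_{kk'}^2 + o_P(1), \]
which is exactly the linearization of \cite{ilmonen2010new}. I would state this as the first step and apply it once to each factor $\hat{\boldsymbol{\Gamma}}_{(m)}$ and once to the Kronecker product $\hat{\boldsymbol{\Gamma}} := \hat{\boldsymbol{\Gamma}}_{(r)} \otimes \ldots \otimes \hat{\boldsymbol{\Gamma}}_{(1)}$; the latter is legitimate because continuity of the Kronecker product together with $\hat{\boldsymbol{\Gamma}}_{(m)} \rightarrow_P \textbf{I}_{p_m}$ gives $\hat{\boldsymbol{\Gamma}} \rightarrow_P \textbf{I}_p$. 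Note that only the marginal $\sqrt{n}$-tightness of each factor is needed, not the joint law, since the final claim is an $o_P(1)$ identity.

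Next I would analyze the off-diagonal entries of the Kronecker product. Indexing its rows and columns by multi-indices $(k_1, \ldots, k_r)$ and $(k_1', \ldots, k_r')$, the corresponding entry equals $\prod_{m=1}^r \hat{\gamma}^{(m)}_{k_m k_m'}$. Writing each diagonal factor as $1 + O_P(n^{-1/2})$ and each off-diagonal factor as $O_P(n^{-1/2})$, I would split the off-diagonal multi-index pairs into two classes: those in which exactly one mode $m_0$ has $k_{m_0} \neq k_{m_0}'$, and those in which two or more modes differ. For the first class the entry equals $\hat{\gamma}^{(m_0)}_{k_{m_0} k_{m_0}'}(1 + O_P(n^{-1/2}))$, so $n$ times its square equals $n(\hat{\gamma}^{(m_0)}_{k_{m_0} k_{m_0}'})^2 + o_P(1)$; for the second class the entry carries at least two factors of order $n^{-1/2}$, whence $n$ times its square is $O_P(n^{-1}) = o_P(1)$. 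As the number of multi-index pairs is fixed in $n$, the whole second class contributes only $o_P(1)$.

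The remaining task is a multiplicity count for the first class. Fixing a mode $m_0$ and an off-diagonal pair $k_{m_0} \neq k_{m_0}'$, the constraint $k_m = k_m'$ for all $m \neq m_0$ leaves $\prod_{m \neq m_0} p_m = p/p_{m_0}$ free choices of the common indices, each producing the same leading term $n(\hat{\gamma}^{(m_0)}_{k_{m_0} k_{m_0}'})^2$. Summing over modes and off-diagonal pairs therefore gives
\[ \sum_{(K) \neq (K')} n\,\hat{\gamma}_{(K)(K')}^2 = \sum_{m=1}^r \frac{p}{p_m} \sum_{k \neq k'} n\,(\hat{\gamma}^{(m)}_{kk'})^2 + o_P(1), \]
and applying the mode-wise linearization $\sum_{k \neq k'} n(\hat{\gamma}^{(m)}_{kk'})^2 = n(p_m - 1)\hat{D}^2(\hat{\boldsymbol{\Gamma}}_{(m)}, \textbf{I}_{p_m}) + o_P(1)$ to each inner sum, together with the Kronecker-product linearization on the left-hand side, delivers the claimed identity. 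The orthogonal-mixing case follows at once from the orthogonal equivariance established earlier, since the minimum distance index is defined relative to $\boldsymbol{\Omega}_m$ and is invariant when $\boldsymbol{\Omega}_m$ is orthogonal.

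I expect the only genuinely delicate point to be the first step: justifying that the optimal $\textbf{C}$ is asymptotically a pure scaling and that this scaling removes the diagonal deviations without disturbing the off-diagonal terms to leading order. Everything after that is bookkeeping --- the separation into one-mode-differs and several-modes-differ terms, and the counting of the $p/p_m$ repetitions.
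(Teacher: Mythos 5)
Your proposal is correct and follows the same overall strategy as the paper's proof: both sides are linearized via Theorem 1 of \cite{ilmonen2010new} into $n$ times the squared Frobenius norm of the off-diagonal part, and the identity then follows from a decomposition of $n \| \mathrm{off}( \hat{\boldsymbol{\Gamma}}_{(r)} \otimes \ldots \otimes \hat{\boldsymbol{\Gamma}}_{(1)} ) \|^2_F$. The only difference is how that decomposition is executed. The paper works with norm algebra: it uses $\| \mbf A \otimes \mbf B \|_F^2 = \| \mbf A \|_F^2 \| \mbf B \|_F^2$ together with the fact that the diagonal of a Kronecker product is the Kronecker product of the diagonals, and then expands $\prod_{m} \left( \| \hat{\boldsymbol{\Gamma}}_{(m)} \|^2_F - \| \mathrm{off} ( \hat{\boldsymbol{\Gamma}}_{(m)} ) \|^2_F \right)$, discarding all terms containing two or more off-diagonal factors. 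You instead classify the off-diagonal entries of the Kronecker product by the number of modes in which the row and column multi-indices differ, keeping the one-mode-differs class (with the explicit multiplicity $p/p_m$) and discarding the rest. These are the same computation in different clothing --- your ``two or more modes differ'' entries are exactly the cross terms the paper drops in its product expansion --- so the two proofs are essentially equivalent; your entrywise version makes the $p/p_m$ count more transparent, while the paper's version is more compact. One economy you could make: the step you flag as the only delicate point (that the optimal $\textbf{C}$ is asymptotically a diagonal scaling, yielding $n(d-1)\hat{D}^2(\hat{\boldsymbol{\Gamma}}, \textbf{I}_d) = n \| \mathrm{off}(\hat{\boldsymbol{\Gamma}}) \|_F^2 + o_P(1)$) is precisely the content of Theorem 1 in \cite{ilmonen2010new}, so it can simply be cited rather than re-derived.
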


\begin{proof}
By Theorem 1 in \cite{ilmonen2010new} the left-hand side of the claim equals
\begin{align*}
& n \| \mathrm{off}( \hat{\boldsymbol{\Gamma}}_{(r)} \otimes \ldots \otimes \hat{\boldsymbol{\Gamma}}_{(1)} ) \|^2_F + o_P(1) \\
= & n \| \hat{\boldsymbol{\Gamma}}_{(r)} \otimes \ldots \otimes \hat{\boldsymbol{\Gamma}}_{(1)} \|^2_F - n \| \mathrm{diag}( \hat{\boldsymbol{\Gamma}}_{(r)} \otimes \ldots \otimes \hat{\boldsymbol{\Gamma}}_{(1)} ) \|^2_F + o_P(1) \\
= & n \prod_{m=1}^r \| \hat{\boldsymbol{\Gamma}}_{(m)} \|^2_F - n \prod_{m=1}^r \| \mathrm{diag} ( \hat{\boldsymbol{\Gamma}}_{(m)} ) \|^2_F + o_P(1) \\
= & n \prod_{m=1}^r \| \hat{\boldsymbol{\Gamma}}_{(m)} \|^2_F - n \prod_{m=1}^r \left( \| \hat{\boldsymbol{\Gamma}}_{(m)} \|^2_F - \| \mathrm{off} ( \hat{\boldsymbol{\Gamma}}_{(m)} ) \|^2_F \right) + o_P(1).
\end{align*}
Focus next on the second product. We have $n \| \mathrm{off} ( \hat{\boldsymbol{\Gamma}}_{(m)} ) \|^2_F = O_P(1)$, $\| \mathrm{off} ( \hat{\boldsymbol{\Gamma}}_{(m)} ) \|^2_F = o_P(1)$ and $\|  \hat{\boldsymbol{\Gamma}}_{(m)}  \|^2_F = p_m + o_P(1)$, meaning that when the product is opened the terms with more than one $\| \mathrm{off} ( \cdot ) \|^2_F$-term are $o_P(1)$. We are thus left with
\begin{align*}
\sum_{m = 1}^r \left( n \| \mathrm{off} ( \hat{\boldsymbol{\Gamma}}_{(m)} ) \|^2_F \prod_{s \neq m}^r p_s \right) + o_P(1),
\end{align*}
and using Theorem 1 in \cite{ilmonen2010new} in the other direction, $n \| \mathrm{off} ( \hat{\boldsymbol{\Gamma}}_{(m)} ) \|^2_F = n (p_m - 1) \hat{D}^2(\hat{\boldsymbol{\Gamma}}_{(m)}, \textbf{I}_{p_m}) + o_P(1)$ gives the claim.
\end{proof}

\begin{corollary}\label{cor:kronecker_mdi}
Under the assumptions of Theorem \ref{theo:kronecker_mdi} the expected value of the limiting distribution of $n (p - 1) \hat{D}^2(\hat{\boldsymbol{\Gamma}}_{(r)} \otimes \ldots \otimes \hat{\boldsymbol{\Gamma}}_{(1)}, \textbf{I}_p)$ is $\sum_{m = 1}^r (p/p_m) E_m$ where $E_m$ is as in \eqref{eq:limit_ev}.
\end{corollary}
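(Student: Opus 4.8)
The plan is to read the result off directly from the in-probability linear decomposition established in Theorem \ref{theo:kronecker_mdi} and then apply linearity of expectation to the limiting random variables. That theorem gives
\[ n(p-1)\hat{D}^2(\hat{\boldsymbol{\Gamma}}_{(r)} \otimes \ldots \otimes \hat{\boldsymbol{\Gamma}}_{(1)}, \textbf{I}_p) = \sum_{m=1}^r \frac{p}{p_m}\, n(p_m-1)\hat{D}^2(\hat{\boldsymbol{\Gamma}}_{(m)}, \textbf{I}_{p_m}) + o_P(1), \]
so by Slutsky's theorem the left-hand side and the finite linear combination on the right share the same limiting distribution.

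First I would record the single-mode limit already available in the excerpt: the limit result stated just before \eqref{eq:limit_ev} gives $n(p_m-1)\hat{D}^2(\hat{\boldsymbol{\Gamma}}_{(m)}, \textbf{I}_{p_m}) \rightarrow_d \mathcal{D}_m$ with $E(\mathcal{D}_m) = E_m$, where $E_m$ is the sum of asymptotic variances of the off-diagonal entries of $\hat{\boldsymbol{\Gamma}}_{(m)}$ from \eqref{eq:limit_ev}, evaluated explicitly in Corollary \ref{cor:micm_asymp}.

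Next I would promote these marginal limits to a joint one. Because the unmixing estimates $\hat{\boldsymbol{\Gamma}}_{(m)}$ are smooth functions of the same sample moments, they are jointly asymptotically normal, and by Theorem 1 of \cite{ilmonen2010new} each $n(p_m-1)\hat{D}^2(\hat{\boldsymbol{\Gamma}}_{(m)}, \textbf{I}_{p_m})$ equals $n\|\mathrm{off}(\hat{\boldsymbol{\Gamma}}_{(m)})\|_F^2 + o_P(1)$, a continuous function of those estimates. The continuous mapping theorem then yields joint convergence of the vector $(n(p_m-1)\hat{D}^2(\hat{\boldsymbol{\Gamma}}_{(m)}, \textbf{I}_{p_m}))_{m=1}^r$ to some coupling $(\mathcal{D}_1, \ldots, \mathcal{D}_r)$ carrying the marginals above. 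Combining this with the decomposition, the limiting distribution $\mathcal{D}$ of $n(p-1)\hat{D}^2(\hat{\boldsymbol{\Gamma}}_{(r)} \otimes \ldots \otimes \hat{\boldsymbol{\Gamma}}_{(1)}, \textbf{I}_p)$ coincides with that of $\sum_{m=1}^r (p/p_m)\mathcal{D}_m$.

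Finally, taking expectations and using linearity gives $E(\mathcal{D}) = \sum_{m=1}^r (p/p_m) E(\mathcal{D}_m) = \sum_{m=1}^r (p/p_m) E_m$, as claimed. I expect the only genuine subtlety to be the passage from the marginal weak limits to the weak limit of their linear combination, so that the expectation is computed against the correct limit law; once the joint weak limit is secured, linearity of expectation makes the dependence among the modes (induced by their sharing the same data) irrelevant, and no uniform-integrability argument is needed since the statement concerns the expectation of the limiting distribution rather than the limit of the prelimit expectations.
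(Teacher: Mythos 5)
Your proof is correct and follows essentially the same route the paper intends: the paper states this corollary without a separate proof, treating it as an immediate consequence of Theorem \ref{theo:kronecker_mdi} together with the limit result $n(p_m-1)\hat{D}^2_m \rightarrow_d \mathcal{D}_m$, $E(\mathcal{D}_m) = E_m$, cited just before \eqref{eq:limit_ev}. Your explicit handling of the joint convergence of the mode-wise statistics (so that the expectation is computed against the correct coupled limit, after which linearity of expectation makes the cross-mode dependence irrelevant) is exactly the routine detail the paper leaves implicit, and it is handled correctly.
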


Corollary \ref{cor:kronecker_mdi} implies that the comparison between FOBI and TFOBI should be done by comparing the values of $E^*_1$ and $\sum_{m = 1}^r (p/p_m) E_m$ where $E^*_1$ is the value of \eqref{eq:limit_ev} for FOBI. These values will later be plotted in the simulations where the orthogonal equivariance of TFOBI guarantees that Corollary \ref{cor:kronecker_mdi} holds also when the mixing is orthogonal. Finally, Theorem \ref{theo:kronecker_mdi} also provides insight into the general comparison of two arbitrary (transformed) MDI-values, $n (q_1 - 1) D^2_1$ and $n (q_2 - 1) D^2_2$. If the respective mixing matrices are of the size $q_1 \times q_1$ and $q_2 \times q_2$ then the quantities $n q_2 (q_1 - 1) D^2_1$ and $n q_1 (q_2 - 1) D^2_2$ are on the same ``scale''.

%Finally note that if we wanted to compare MFOBI with vectorization and FOBI using the methods' asymptotical properties, we would face the difficulty of choosing a suitable mixing. If we chose to use identity mixing, almost every element of the Kronecker product $\hat{\boldsymbol{\Gamma}}_R \otimes \hat{\boldsymbol{\Gamma}}_L$ would actually converge at rate $n$, making the comparison to FOBI's $\sqrt{n}$ rate of convergence rather unfair (and unrealistic). The other option would be to use orthogonal mixing, but then we would have to choose some fixed orthogonal matrices to use, which would be rather artificial. All comparisons are thus made using simulations.

\section{Simulation studies and a real data example}\label{sec:simu}

%\textcolor{red}{Is this part necessary?}
\subsection{On computational issues}

Before the simulations we compare the assumptions between MFOBI and first vectorizing and then using FOBI, hereafter referred to just as FOBI. The difference clearly lies in Assumptions \ref{assu:vicm_asymp} and \ref{assu:micm_asymp}, which simply state that MFOBI makes much less assumptions on the kurtosis values. For reasonably large square $p \times p$ matrices, vectorizing and using FOBI roughly squares the amount of constraints needed for MFOBI ($2p$ vs. $p^2$). However, one has to bear in mind that the nature of the constraints also changes, MFOBI being concerned with the row and column means of kurtoses and FOBI with the individual values.

Secondly, the most computationally intensive parts in both FOBI and MFOBI are the eigendecompositions, the computational complexity of finding the eigendecomposition of a $p \times p$ matrix being roughly $\mathcal{O}(p^3)$ \citep{pan1998complexity}. Thus assuming again observations of size $p \times p$, MFOBI requires four $\mathcal{O}(p^3)$ operations while FOBI needs two $\mathcal{O}(p^6)$ operations, a considerable difference with large $p$. And thirdly, the numbers of estimable parameters are for MFOBI and FOBI $2p^2-1$ and $p^4$, respectively (assuming again that $p=q$).

All the previous issues become even more serious when comparing TFOBI and FOBI: the number of components in FOBI grows exponentially with the order of the tensor while in general TFOBI just has to perform a few more eigendecompositions of much smaller matrices.

All following computations have been made in \textsf{R} \citep{Rcore}, especially using the packages \texttt{abind} \citep{abind}, \texttt{ICS} \citep{ics}, \texttt{JADE} \citep{miettinen2017blind}, \texttt{MASS} \citep{rpackageMASS} and \texttt{tensor} \citep{tensor}. The implementation of TFOBI and several other tensor methods can be found in the package \texttt{tensorBSS} \citep{RtensorBSS}.

\subsection{Separation performance comparison}

In our first simulation we compared the abilities of MFOBI and FOBI to estimate the unmixing matrix and separate the sources. As our setting we chose samples of independent $3 \times 4$ observations $\textbf{Z}_i$, the 12 components of which, depicted in Table \ref{tab:distr}, were standardized to have zero mean and unit variance. Starting from the top left corner and moving down and right the kurtoses of the components are 1.8, 2.4, 3, 4, 5, 6, 7, 8, 9, 11, 13 and 18. The sample sizes considered were $n = 1000, 2000, 4000, 8000, \ldots , 256000$. Furthermore, we considered three types of double mixings, $\textbf{Z}_i \mapsto \textbf{X}_i = \boldsymbol{\Omega}_1 \textbf{Z}_i \boldsymbol{\Omega}_2^\top$, (i) normal distribution, (ii) uniform distribution and (iii) orthogonal matrices uniform with respect to the Haar measure. In the first two cases appropriate square matrices were created having random elements from $\mathcal{N}(0,1)$ or $\mathcal{U}(-1,1)$ respectively.

\begin{table}[t]
\caption{The distributions of the elements of $\textbf{Z}_i$ in the first simulation. $\mathcal{U}(a, b)$ denotes the continuous uniform distribution from $a$ to $b$, $Tri(a, b, c)$ the triangular distribution from $a$ to $b$ with the apex located at $c$ and $InvGauss(\mu, \lambda)$ the inverse Gaussian distribution with mean $\mu$ and shape $\lambda$.}
\label{tab:distr}
\center
\begin{tabular}{|c|c|c|c|}
\hline
$\mathcal{U}(-\sqrt{3}, \sqrt{3})$ & $t_{10}$ & $\chi^2_3$ & $\chi^2_{1.5}$ \\ \hline
$Tri(-\sqrt{6}, \sqrt{6}, 0)$ & $Gamma(3, \sqrt{3})$ & $Gamma(1.2, \sqrt{1.2})$ & $\chi^2_{1.2}$ \\ \hline
$\mathcal{N}(0, 1)$ & $Laplace(0, 1/\sqrt{2})$ & $Exp(1)$ & $InvGauss(1, 1)$ \\ \hline
\end{tabular}
\end{table}

\begin{figure}[t]
  \center
  \includegraphics[width=\textwidth]{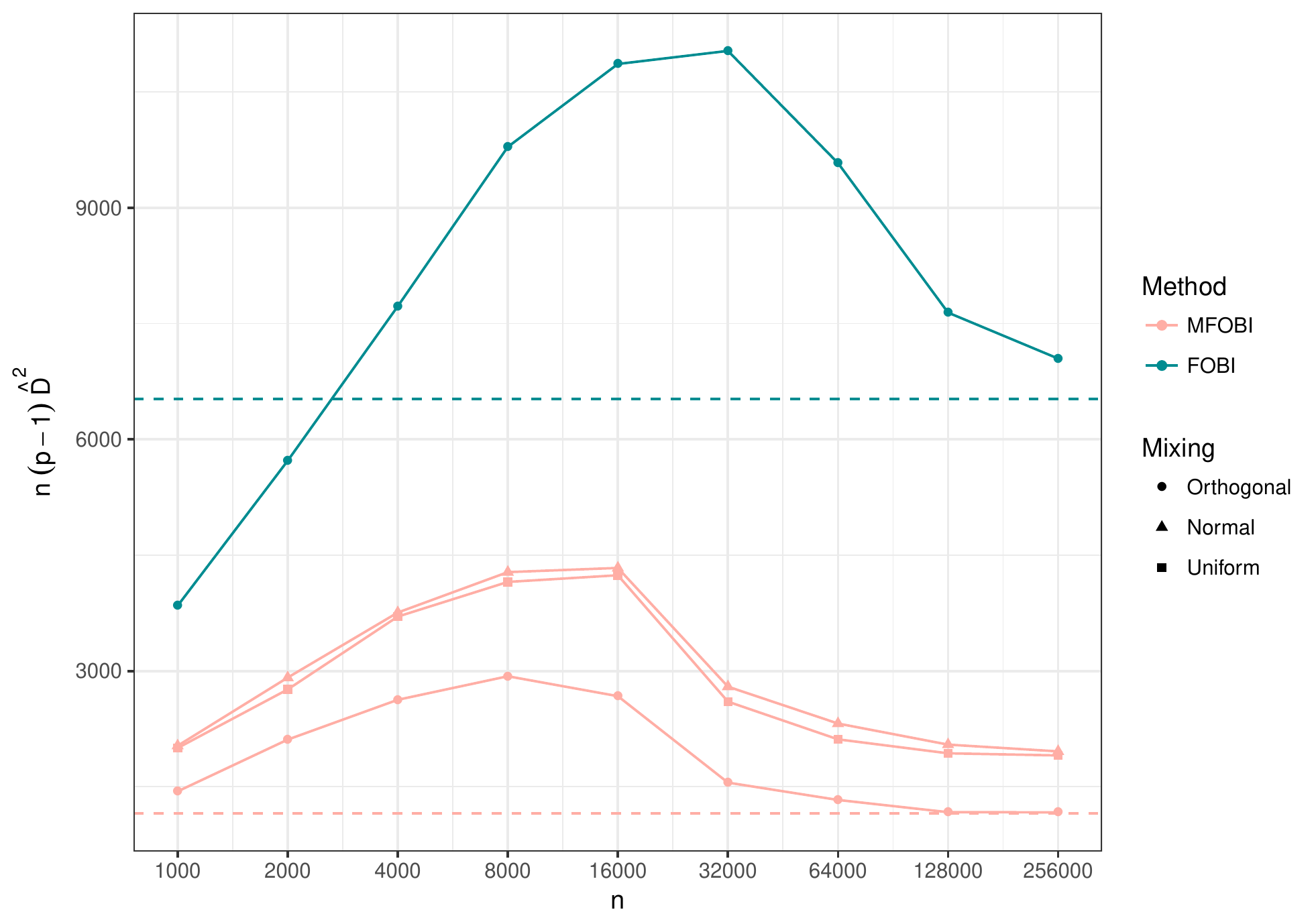}
  \caption{The plot of sample size versus the mean transformed MDI-value with different combinations of method and mixing. The dashed lines give the values of $\sum_{m = 1}^2 (p_1 p_2/p_m) E_m$ and $E^*_1$ towards which the means under orthogonal mixing theoretically converge.}
  \label{fig:mdi}
\end{figure}

We did a total of 2000 replications per setting and as our performance criteria we used the transformed minimum distance indices discussed in the end of Section \ref{sec:asymp}, $n (p_1 p_2 - 1) D(\hat{\boldsymbol{\Gamma}}_{(2)} \otimes \hat{\boldsymbol{\Gamma}}_{(1)}, \boldsymbol{\Omega}_2 \otimes \boldsymbol{\Omega}_1)$ and $n (p_1 p_2 - 1) D(\hat{\boldsymbol{\Gamma}}_{(1)}^*, \boldsymbol{\Omega}_2 \otimes \boldsymbol{\Omega}_1)$, where $\hat{\boldsymbol{\Gamma}}_{(1)}^*$ is the FOBI unmixing estimate. The two values directly measure the accuracies of the methods' separation abilities (lower is better) and under orthogonal mixing (under all mixings for the affine equivariant FOBI), when $n$ grows their means will converge to $\sum_{m = 1}^2 (p_1 p_2/p_m) E_m$ and $E^*_1$, respectively, see \eqref{eq:limit_ev} and Corollary \ref{cor:kronecker_mdi}. The mean values of the criteria and their limit values are plotted in Figure \ref{fig:mdi} and we make the following observations: Contrary to FOBI, the performance of MFOBI indeed depends on the mixing matrix as is shown by the three distinct lines in Figure \ref{fig:mdi}. The separation is easiest for MFOBI when the mixing is orthogonal (because of its orthogonal equivariance orthogonal mixing is equivalent to no mixing at all) and between normal and uniform mixing the separation is slightly easier under the latter. FOBI, while affine equivariant and independent of the choice of mixing, is clearly inferior to MFOBI both with finite samples (the solid lines) and in the limit (the dashed lines). Both curves under orthogonal mixing approach the corresponding limit values, MFOBI faster than FOBI, giving empirical proof on the correctness of the results of Section \ref{sec:asymp}.
%the implications of Figure \ref{fig:mdi} are still clear; MFOBI is superior to FOBI under every choice of mixing. %Naturally MFOBI performs best under orthogonal mixing since it is orthogonally equivariant. Note that the affine equivariance of FOBI is also evident in Figure \ref{fig:mdi} from its identical performance under different mixings.

\subsection{Comparison between the normed and non-normed versions}

Our next simulation study compares the two choices of TFOBI functionals, $N = 0, 1$. By Corollary \ref{cor:micm_acomp} the value of $N$ makes no difference in  modes of length two and, guided by the condition in Corollary \ref{cor:micm_acomp}, we consider two settings, both random samples of independent and identically distributed $3 \times 3$ matrices, with the elements 
\[
\begin{pmatrix}
\mathcal{N}(0, 1) & \mathcal{B}(-1, 1) & \mathcal{B}(-1, 1) \\
\mathcal{B}(-1, 1) & \mathcal{U}(-\sqrt{3}, \sqrt{3}) & \mathcal{B}(-1, 1) \\
\mathcal{B}(-1, 1) & \mathcal{B}(-1, 1) & \mathcal{B}(-1, 1)
\end{pmatrix}
\quad \mbox{and} \quad
\begin{pmatrix}
\mathtt{\mathcal{B}(-1, 1)} & \mathcal{N}(0, 1) & \mathcal{N}(0, 1) \\
\mathcal{N}(0, 1) & \mathcal{U}(-\sqrt{3}, \sqrt{3}) & \mathcal{N}(0, 1) \\
\mathcal{N}(0, 1) & \mathcal{N}(0, 1) & \mathcal{N}(0, 1)
\end{pmatrix},
\]
where $\mathcal{N}(0, 1)$ is the standardized normal distribution, $\mathcal{U}(-\sqrt{3}, \sqrt{3})$ is the continuous uniform distribution from $-\sqrt{3}$ to $\sqrt{3}$ and $\mathcal{B}(-1, 1)$ is the two-point probability distribution taking equally likely each of the values, $-1$ and $1$. The distributions have the respective kurtoses 3, 1.8 and 1 and consequently the condition of Corollary \ref{cor:micm_acomp} is satisfied for every off-diagonal element in the first setting and is not satisfied for any off-diagonal element in the second setting. Asymptotically the choice $N = 1$ is superior to $N = 0$ in the first setting and vice versa for the second one. To investigate whether this holds also for finite samples we simulated samples of size $n = 1000, 2000, 4000, 8000, \ldots , 256000$ from the above distributions and applied MFOBI to them in four different forms: using the pairs $(\textbf{B}_0^L; \textbf{B}_0^R)$, $(\textbf{B}_1^L; \textbf{B}_1^R)$ and the mixed pairs $(\textbf{B}_0^L; \textbf{B}_1^R)$ or $(\textbf{B}_1^L; \textbf{B}_0^R)$. Intuitively, the performances of the latter two should fall somewhere between those of the former two. To be able to utilize our asymptotic results we did not mix the observations (which is equivalent to using orthogonal mixing).

\begin{figure}[t]
  \center
  \includegraphics[width=\textwidth]{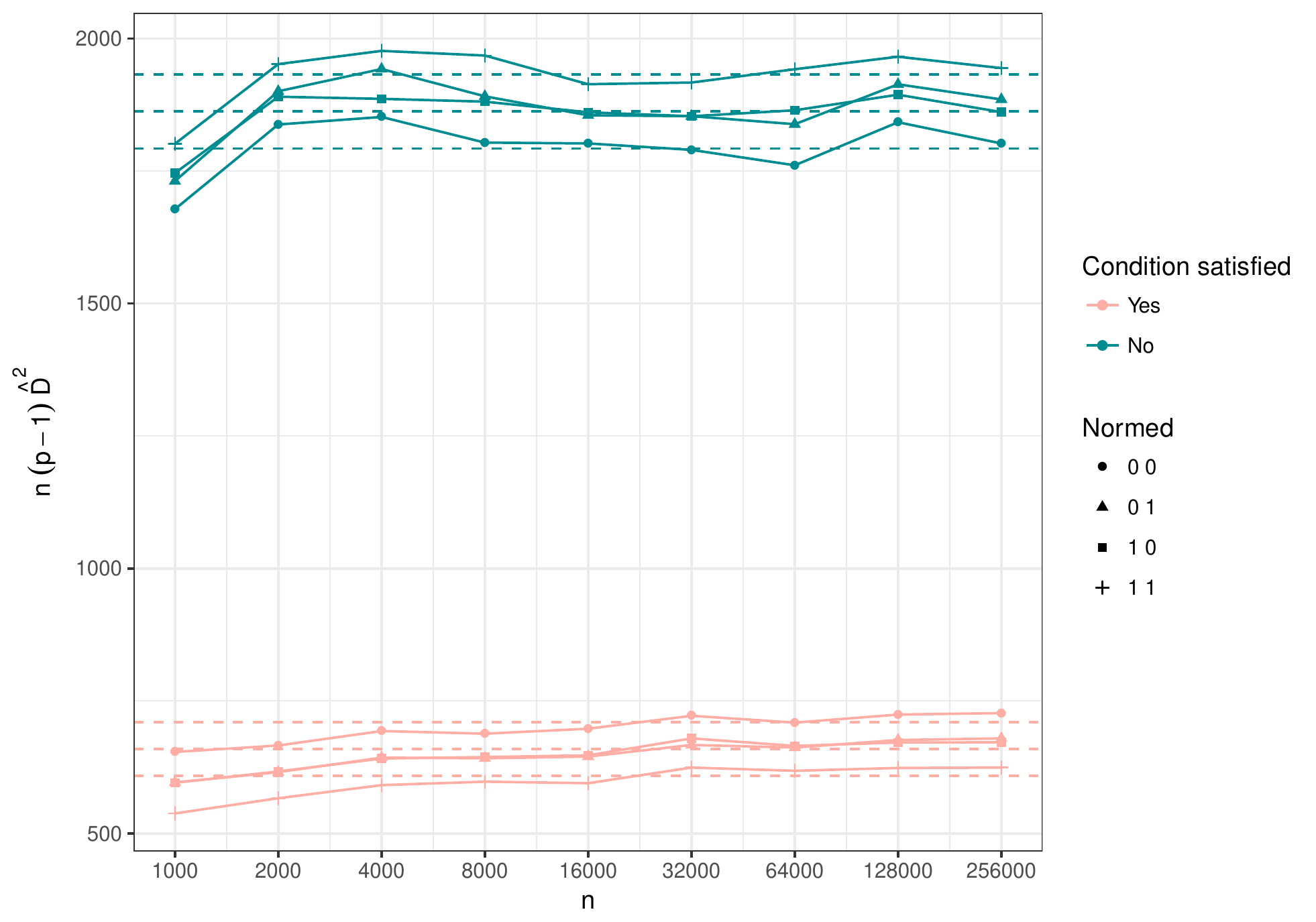}
  \caption{The plot of sample size versus the mean transformed MDI-value with different combinations of setting and $N = 0, 1$. The value of ``Normed'' tells which value of $N$ was used for the left and right unmixing matrices, e.g., 1 0 means that the left unmixing matrix used the normed version but the right one did not. The dashed lines give the values of $\sum_{m = 1}^2 (p_1 p_2/p_m) E_m$ towards which the means theoretically converge.}
  \label{fig:mdi_2}
\end{figure}

We again used the minimum distance index as a criterion and the resulting mean transformed MD-indices over 2000 replications are shown in Figure \ref{fig:mdi_2}.
The dashed lines in the plot indicate the limiting expected values computed using the results of Section \ref{sec:asymp}, toward which the solid lines theoretically converge. We have not visually distinguished the limit lines from each other as their order is the same as the order of the empirical lines. The symmetry of the simulated matrices causes the two mixed MFOBI-functionals to have the same limiting values and similar behavior is also visible in the corresponding two empirical lines matching each other closely. Further observations include: The empirical lines approach the limits rather nicely, with some swaying in the setting where the condition is not satisfied. The setting where the condition is satisfied is overall more easily separated (the lines are lower in the plot). Finally, the ordering between the methods is consistent throughout the study and under both settings the two mixed cases are located almost halfway between the non-mixed cases. Despite the success of the choice $N=1$ here, based on the extreme measures that were required to create a setting where the condition of Corollary \ref{cor:micm_acomp} is satisfied (we needed to resort to the transformed Bernoulli-distribution $\mathcal{B}(-1, 1)$, the probability distribution with the lowest possible kurtosis) we still choose to advocate using primarily the case $N = 0$.

\subsection{FOBI and TFOBI in classification}

Traditionally, although not consistent with the model assumptions, ICA methods are often used as a preprocessing step for classification as linear combinations of the variables with high or low kurtosis are often the most informative in this sense. \cite{PenaPrietoViladomat:2010} for example used FOBI to reveal cluster structures in the data. Also, \cite{tyler2009invariant} showed that two scatter matrices can be combined to estimate Fisher's linear subspace in the case of mixtures of elliptical distributions with proportional covariance matrices. Following the interpretation of FOBI and TFOBI as a combination of different scatters we compare in this section FOBI and TFOBI for the purposes of classification.

The comparison was done in the following set-up. For each replication we simulated 500 observations of $5 \times 5 \times 5$ tensors $\textbf{X}_i$ belonging to one of two groups. In group 1 all elements of the observations $\textbf{X}_i$ are sampled from independent $\mathcal{N}(0, 1)$-distributions, while in group 2 the front upper left $2 \times 2 \times 2$ corner has elements sampled from independent $\mathcal{N}(2, 1)$-distributions (and the rest of the elements from $\mathcal{N}(0, 1)$). A proportion $\pi$ of all observations belonged to group 2.

We did 2000 replications for each of the values $\pi = 0.10, 0.15,  \ldots , 0.50$ and for each replication we mixed the observations from all three $m$-modes %by two $10 \times 10$-matrices with elements sampled independently from $\mathcal{N}(0,1)$ or $Uniform(-1, 1)$ or by random $10 \times 10$ orthogonal matrices uniform with respect to the Haar measure.
using the same three types of mixing matrices as in the previous section.
Next, we divided the transformed data randomly into training and test sets, with the respective sizes of $400$ and $100$. Both TFOBI and FOBI were then carried out for the training data and linear discriminant analysis (LDA) was used to create classification rules based on certain selected components. For TFOBI we chose these to be the corner components $z_{1,1,1}$ and $z_{5,5,5}$ and the components having the highest and lowest kurtoses. For FOBI we simply chose the first two and the last two components (ordered according to kurtosis). As a reference, we also created a classification rule with LDA using all the original components. The means of the proportions of correct predictions in the test set for each of the rules are plotted in Figure \ref{fig:qda}. The reference value is included as the line ``NONE''.

\begin{figure}[t]
  \center
  \includegraphics[width=\textwidth]{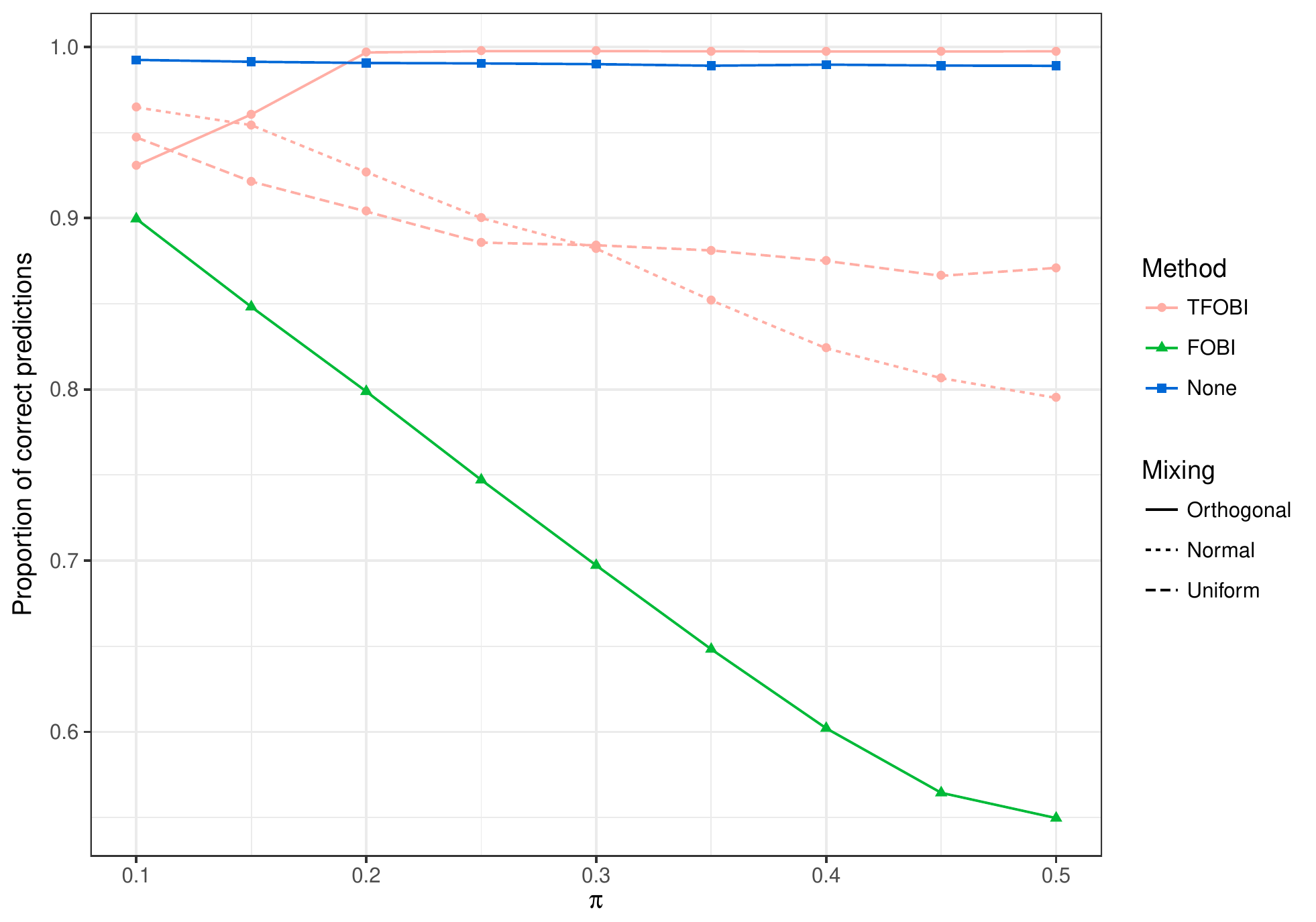}
  \caption{Proportions of correct classifications as a function of proportion $\pi$ with FOBI and TFOBI as pre-steps and with three types of random mixing matrices.}
  \label{fig:qda}
\end{figure}

LDA uses the training set group proportions as a prior and a ``baseline'' proportion of correct predictions is thus $1-\pi$, corresponding to classifying all test observations to the dominant group. The plot indicates that FOBI cannot find the direction separating the groups in any systematic way and is actually no better than the baseline. TFOBI, on the other hand, is in every case better than FOBI and performs very nicely under all mixings (especially orthogonal). Under orthogonal mixing and for $\pi$ larger than or equal to 0.20 TFOBI, being able to filter out the noise, is also slightly better than using all the original components. The simulation thus implies that TFOBI provides a reliable way of extracting the separating variables from tensor-valued data.

%Some further simulations (not shown here) indicated that the setting actually contained too much noise for FOBI; reducing the size of the matrices to $4 \times 4$ while keeping the size of the corner square as $2 \times 2$ allowed also FOBI to find the structure most of the time. Additionally we also tried varying the sample size and the mean of the upper left corner of the second group, but the results were very much alike to those shown.

\subsection{Real data example}

To see how MFOBI works with real data we use the \textit{semeion}\footnote{Semeion Research Center of Sciences of Communication, via Sersale 117, 00128 Rome, Italy; Tattile Via Gaetano Donizetti, 1-3-5,25030 Mairano (Brescia), Italy.} data set available from the UCI Machine Learning Repository \citep{Lichman:2013}. The data consist of 1593 scanned handwritten digits written by 80 persons represented as binary $16 \times 16$ matrices. For our analysis we picked only the images of the visually similar digits $3$ and $8$ hoping to find a direction separating the two digits. The number of observations is then $n=314$ with almost equal number of threes and eights (159 and 155, respectively).

\begin{figure}[t]
  \center
  \includegraphics[width=\textwidth]{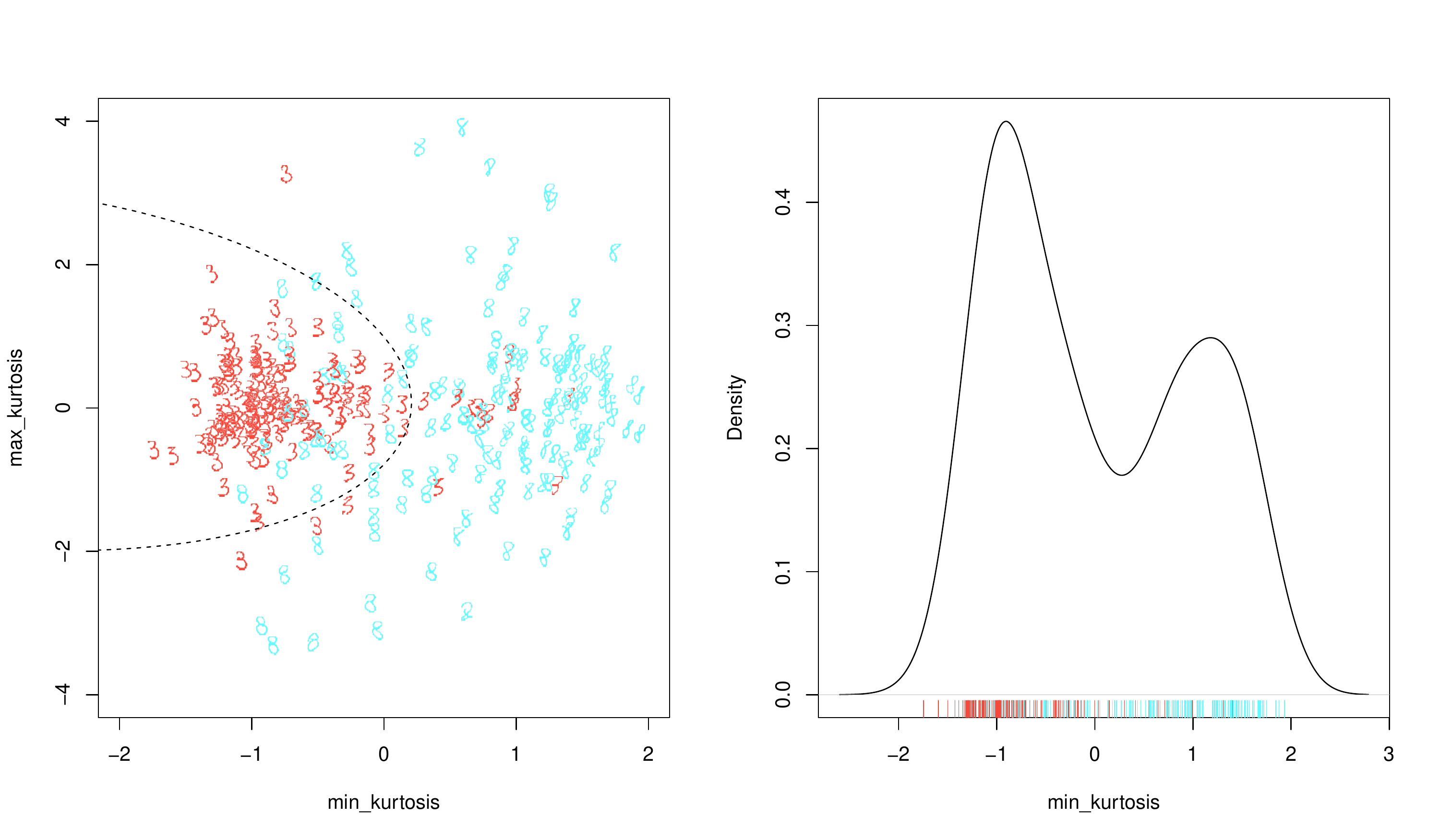}
  \caption{The figure on the left-hand side shows the scatter plot of the two independent components having the lowest and highest kurtoses, dividing the data nicely into two groups. The separation is also visible on the right-hand side in the rug and the bimodal kernel density estimate of the component with the lowest kurtosis.}
  \label{fig:semeion_comp}
\end{figure}

The results of MFOBI are shown in Figure \ref{fig:semeion_comp}. The scatter plot on the left shows the distributions of the components having the highest and lowest kurtoses ($z_{1,2}$ and $z_{16,16}$, respectively), with the individual images as plotting markers, along with the decision boundary given by quadratic discriminant analysis. Although the two groups of digits overlap a bit the separation is still very clear, as is evidenced also by the kernel density estimate of the minimal kurtosis component on the right-hand side of Figure \ref{fig:semeion_comp}. We also see that the hand-writing is slanting more and more to the right with increasing values of $z_{16,16}$ and that the variable $z_{1,2}$ with highest kurtosis can be used in search for outliers. %This is also shown in Figure \ref{fig:semeion_comp} where outlying values of $z_{1,2}$ are marked with grey background.

%\begin{figure}[t]
%  \center
%  \includegraphics[width=\textwidth]{semeion.pdf}
%  \caption{A sample of the used subset of the \textit{semeion} dataset comprising of handwritten digits 3 and 8 ordered according the separating variable (moving right and down).}
%  \label{fig:semeion}
%\end{figure}

For comparison, we also tried applying regular FOBI to the vectorized data with somewhat disappointing results; the covariance matrix of the full data was not invertible and when trying with some subsets of the data, FOBI succeeded only in finding a few outliers.

\section{Concluding remarks}\label{sec:conc}

In this paper, we presented methods of independent component analysis for matrix- and tensor-valued observations called MFOBI and TFOBI. The total procedure can be seen as a simultaneous application of the classic FOBI on all $m$-modes of the observed tensors. %And like its predecessor also MFOBI and TFOBI are computationally rather light and very easy to implement for large data sets.

Apart from the algorithms and two different ways of estimating the unmixing matrix we also provided the asymptotic variances of the elements of the unmixing matrix estimates in the case of orthogonal mixing. The variance expressions then show that using the non-normed version of TFOBI is in most cases the preferable approach. Regarding the comparison of TFOBI with the often used combination of vectorizing and FOBI, we first stated that the numbers of estimable parameters and assumptions required are of much smaller order in MFOBI and TFOBI. This is because they are able to exploit the possible tensor structure in the estimation. Next, simulations were used to show TFOBI's superiority to FOBI also in practice, both in estimating the unmixing matrix and as a preprocessing step for discriminant analysis.

With MFOBI and TFOBI being derivatives of FOBI a reasonable conjecture is that, instead of relying on the kurtosis matrices $\textbf{B}{}^N$, extending some other standard ICA techniques like projection pursuit or JADE \cite{cardoso1993blind} into the tensor case would lead into better estimates. \cite{virta2016jade} showed that this holds for JADE and some preliminary investigation shows that this is indeed the case for projection pursuit as well and such a take on the problem can then be seen as a tensor version of FastICA \citep{HyvarinenKarhunenOja:2001}. The resulting concept of tensorial projection pursuit will be addressed in future work.

Nevertheless, compared with other perhaps more sophisticated routes of generalization, the FOBI-type extensions enjoy a particularly simple structure for high-dimensional tensors: the higher moment tensors decompose neatly to matrices of reasonably low dimensions. As a result the eigendecompositions only need to be performed on $p_m \times p_m$ matrices individually. This feature makes MFOBI and TFOBI especially attractive when applied on a large scale.

\section*{References}

\bibliography{references}

\end{document}